\newcommand{\IN}{{\mathbb N}}
\newcommand{\IQ}{{\mathbb Q}}
\newcommand{\sgn}{\mbox{sgn}}
\newcommand{\SL}{\mathrm{SL}}
\newcommand{\N}{\mathbb N}
\newcommand{\C}{\mathbb C}
\theoremstyle{plain}
\newtheorem{thm}{Theorem}[section]
\newtheorem{cor}[thm]{Corollary}
\newtheorem{lem}[thm]{Lemma}
\newtheorem{prop}[thm]{Proposition}
\newtheorem{conj}[thm]{Conjecture}
\newtheorem*{rem}{Remark}
\theoremstyle{definition}
\newtheorem{example}[thm]{Example}
\newtheorem*{example*}{Example}
\newtheorem{defn}[thm]{Definition}
\newtheorem*{defn*}{Definition}
\numberwithin{equation}{section}
\newcommand{\pmat}[1]{\left( \smallmatrix #1 \endsmallmatrix \right)}
\renewcommand{\sgn}{\textnormal{sgn}}
\def\lp{\left(}
\def\rp{\right)}
\def\a{\alpha}
\def\z{\zeta}
\def\n{\nu}
\def\t{\tau}
\def\a{\alpha}
\def\z{\zeta}
\def\n{\nu}
\def\t{\tau}
\renewcommand{\sgn}{{\rm sgn}}
\newcommand{\R}{\mathbb R}
\newcommand{\Z}{\mathbb Z}
\newcommand{\lrb}[1]{\left(#1\right)}
\newcommand{\lrsb}[1]{(#1)}
\newcommand{\lrB}[1]{\left[#1\right]}
\def\bar{\overline}
\setlist[itemize]{noitemsep, topsep=0pt}
\newcommand{\vast}{\bBigg@{2}}
\newcommand{\Vast}{\bBigg@{5}}
\renewcommand{\pmod}[1]{\ \left( \mathrm{mod} \, #1 \right)}
\newcommand{\Pmod}[1]{\ ( \mathrm{mod} \, #1 )}
\newcommand{\QA}{\IQ\langle A\rangle}
\newcommand{\qsh}{\ast_\diamond}
\begin{document}
\title[Limiting behaviour and modular completions of MacMahon-like $q$-series]{Limiting behaviour and modular completions\\ of MacMahon-like $q$-series}

\author[K. Bringmann]{Kathrin Bringmann}
\address{University of Cologne, Department of Mathematics and Computer Science,
	Weyertal 86-90,
	50931 Cologne, Germany}
\email{kbringma@uni-koeln.de}
\email{wcraig@uni-koeln.de}
\email{j.w.ittersum@uni-koeln.de}
\email{bpandey@uni-koeln.de}

\author[W. Craig]{William Craig}

\author[J-W. van Ittersum]{Jan-Willem van Ittersum}

\author[B. Pandey]{Badri Vishal Pandey}

\makeatletter
\@namedef{subjclassname@2020}{%
	\textup{2020} Mathematics Subject Classification}
\makeatother

\copyrightinfo{}{}

\subjclass[2020]{11F03, 11A25, 11F50, 11F11}
\keywords{Divisor sums, hook lengths, partitions, $q$-multiple zeta values, quasimodular forms}

	\begin{abstract}
		Recently, MacMahon's generalized sum-of-divisor functions were shown to link partitions, quasimodular forms, and $q$-multiple zeta values. In this paper, we explore many further properties and extensions of these. Firstly, we address a question of Ono by producing infinite families of MacMahon-like functions that approximate the colored partition functions (and indeed other eta quotients).  We further explore the MacMahon-like functions and discover new and suggestive arithmetic structure and modular completions.
	\end{abstract}

\maketitle

\section{Introduction and statement of results}\label{sec:intro}
	A {\it partition} of $n \in\N_0$ is a list $\lambda = \lp \lambda_1, \lambda_2, \dots, \lambda_\ell \rp$ of positive integers in weakly decreasing order whose sum of parts, denoted by $|\lambda|$, is $n$. We write $\lambda \vdash n$ to denote that $\lambda$ is a partition of~$n$. In this paper, we consider a relationship between partitions and divisor sums. For $k\in\N_0$, we consider the \textit{sum-of-divisors functions}
	\[
		\sigma_k(n) := \sum_{d \mid n} d^k.
	\]
	These play a central role in number theory. The special case $\sigma_1(n)$ has the generating function
	\[
		\mathcal{A}_1(q) := \sum_{n\ge1} \sigma_1(n)q^n = \sum_{n\ge1} \frac{q^n}{(1 - q^n)^2}.
	\]
	MacMahon {\cite{M1921}} broadly generalized this construction using partition theory to study divisor sums. In particular, on \cite[pp. 303, 309]{M1921} he defined for $a\in\N$ the $q$-series
	\begin{align*}
		\mathcal{A}_a(q) &:= \sum_{1\leq n_1 < \cdots < n_a}\frac{q^{n_1 + \cdots + n_a}}{(1 - q^{n_1})^2 \cdots (1 - q^{n_a})^2}.
	\end{align*}
	MacMahon proposed many interesting directions of study for these functions. Andrews and Rose \cite[Corollary 4]{AR2013} proved that the family of functions $\{\mathcal{A}_a\}_{a\in\N}$ belong to the algebra of quasimodular forms. More recently, Amdeberhan--Andrews--Tauraso \cite{AAT2023} and Amdeberhan--Ono--Singh \cite{AOS2023} considered $\mathcal{A}_a$ and showed many congruences for its Fourier coefficients.

	We next recall \cite[Theorem~1.1]{AOS2023}, which shows that the family of quasimodular forms $\mathcal{A}_a$ form a sequence of approximations for the $3$-colored partition function, which produces a hook length formula for small coefficients of $\mathcal{A}_a$. The key ingredients for the proof of \Cref{T:AOS} are the results of Andrews--Rose \cite[Corollary 2]{AR2013} on the functions $\mathcal{A}_a$ and the Nekrasov--Okounkov hook lengths formula \cite{NO} which relates sums over hook numbers of partitions to powers of the \textit{Dedekind eta-function} ($q:=e^{2\pi i \t}$ throughout) $\eta(\t) := q^{\frac{1}{24}} \prod_{n=1}^\infty \lp 1 - q^n \rp$.
	\begin{thm}[Amdeberhan--Ono--Singh]\label{T:AOS}
		Let $a \in \N$.
		\begin{enumerate}[leftmargin=*]
			\item[\rm (1)] We have
			\begin{align*}
			q^{- \frac{a(a+1)}{2}} \mathcal{A}_a(q) = \prod_{n \geq 1} \dfrac{1}{\lp 1 - q^n \rp^3} + O\lp q^{a+1} \rp.
			\end{align*}
			\item[\rm (2)] If $n \leq a$, then we have
			\begin{align*}
			\operatorname{coeff}_{\lrB{q^{n+ \frac{a(a+1)}{2}}}} \mathcal{A}_a(q) = \sum_{\lambda \vdash n} \prod_{h \in \mathcal H(\lambda)} \lp 1 + \dfrac{2}{h^2} \rp = \sum_{\lambda \vdash n} \prod_{s=1}^{n-a} \dfrac{(m_s+1)(m_s+2)}{2},
			\end{align*}
			where $\mathcal{H}(\lambda)$ is the multiset of hook lengths in $\lambda$ (see \Cref{Prelims}) and $m_s = m_s(\lambda)$ denotes the number of occurrences of $s$ in $\lambda$.
		\end{enumerate}
	\end{thm}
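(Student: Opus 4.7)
The plan is to establish part (1) first, after which part (2) will follow as a direct corollary of the Nekrasov--Okounkov hook length formula.

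For part (1), I would begin with the combinatorial expansion obtained from $q^{n_j}/(1-q^{n_j})^2 = \sum_{k \geq 1} k q^{k n_j}$, which yields
$$\mathcal{A}_a(q) = \sum_{\substack{\mu \text{ partition} \\ |\mathrm{supp}(\mu)|=a}} \left(\prod_{s \in \mathrm{supp}(\mu)} m_s(\mu)\right) q^{|\mu|}.$$
The lowest-order term is $q^{a(a+1)/2}$, attained uniquely by the staircase $(1, 2, \dots, a)$. After shifting, the task becomes to show that for $0 \leq n \leq a$,
$$[q^{n + a(a+1)/2}] \mathcal{A}_a(q) = [q^n] \prod_{m \geq 1} (1-q^m)^{-3}.$$
My plan is to parametrize each contributing partition $\mu$ by (i) its support $\{s_i = i + t_i\}$, where $0 \leq t_1 \leq \cdots \leq t_a$ corresponds bijectively to a partition $\tau$ with at most $a$ parts, and (ii) the excess multiplicities $r_i := m_{s_i} - 1 \geq 0$. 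The size condition then reads $n = |\tau| + \sum_i s_i r_i$. The constraint $n \leq a$ restricts these data sufficiently that a direct combinatorial comparison with the $3$-colored partitions of $n$ becomes tractable. Alternatively, one may appeal to Andrews--Rose's \cite[Corollary~2]{AR2013}, which provides an explicit expansion of $\mathcal{A}_a(q)$ permitting termwise comparison with $q^{a(a+1)/2}\prod_{n \geq 1}(1-q^n)^{-3}$ up to the needed order.

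Given part (1), part (2) follows immediately. The first equality results from applying the Nekrasov--Okounkov hook length formula at $z = -2$,
$$\prod_{n \geq 1}(1-q^n)^{-3} = \sum_\lambda q^{|\lambda|} \prod_{h \in \mathcal{H}(\lambda)} \left(1 + \tfrac{2}{h^2}\right),$$
to the right-hand side of part (1). The second equality is the classical $3$-colored partition identity $\prod_{n \geq 1}(1-q^n)^{-3} = \sum_\lambda q^{|\lambda|} \prod_s (m_s+1)(m_s+2)/2$, recording that $\prod (1-q^m)^{-3}$ generates $3$-colored partitions (each distinct part $s$ with multiplicity $m_s$ contributes $\binom{m_s+2}{2}$ colorings).

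The main obstacle will be establishing the coefficient identity for part (1). Indeed, the equality breaks down as soon as $n > a$, so the argument must make essential use of the bound $n \leq a$, either through a careful bijection of the type sketched above or by invoking the structural Andrews--Rose identity.
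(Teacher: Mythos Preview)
Your proposal is correct and your treatment of part~(2) via Nekrasov--Okounkov at $z=-2$ is exactly what the paper does. For part~(1), your fallback route through Andrews--Rose \cite[Corollary~2]{AR2013} is precisely the approach the paper attributes to \cite{AOS2023}.

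Your primary combinatorial route, however, differs from the paper's own self-contained proof of the generalization (Theorem~\ref{T: Main Theorem}). You and the paper both start from the same change of variables $n_j = j + t_j$ (equivalently $n_j = \lambda_{a-j+1}+j$) turning the strictly increasing support into a partition, but then diverge. You propose to expand $\mathcal{A}_a$ fully in terms of the pair $(\tau,(r_i))$ and build a direct bijection with $3$-colored partitions of $n\le a$; this works but, as you acknowledge, the bijection must be made explicit. The paper instead multiplies $q^{-a(a+1)/2}\mathcal{A}_a(q)$ by $\prod_{n\ge1}(1-q^n)^3$ and shows the product is $1+O(q^{a+1})$: the key observation is that for each contributing $\lambda$ the values $\{\lambda_{a-j+1}+j\}$ contain an initial segment $\{1,\dots,m\}$ of $\mathbb N$ with $m+|\lambda|\ge a+1$, so the denominators $(1-q^{n_j})^2$ cancel $\prod(1-q^n)^2$ up to order $q^{a+1}$, and the remaining $\prod(1-q^n)$ absorbs the sum over $\lambda$. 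This multiplicative trick is cleaner---no bijection needs to be written down---and extends uniformly to the general $\mathcal{A}_{a,k,r}$ and $\mathcal{B}_{a,k,r,s}$, which your bijective approach would not do without substantial modification.
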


	In this paper, we consider an open problem posed by Ono \cite{O2024}. For $k \in\N$, the \textit{$k$-colored partition function} $p_k(n)$ counts the number of partitions of $n$, where parts are allowed to take one of $k$ colors. For example, $p(n) := p_1(n)$ counts the number of partitions of $n$. Note the generating function
	\begin{align*}
		\sum_{n \geq 0} p_k(n) q^n = \prod_{n \geq 1} \dfrac{1}{\lp 1 - q^n \rp^k}.
	\end{align*}
	Ono \cite{O2024} suggested that there should be a natural family of quasimodular forms generalizing $\mathcal{A}_a$ whose coefficients approximate other colored partition functions, or more broadly, other powers of~$\eta$. In this paper, we produce these natural families. In order to state our theorems, we define 
	for $a,r,s \in \N$ and $k\in \Z$ the $q$-series
		\begin{align*}
			\mathcal{A}_{a,k,r}(q) :=& \sum_{1\leq n_1 < n_2 < \dots < n_a} \dfrac{q^{r(n_1 + \dots + n_a)}}{\lp 1 - q^{n_1} \rp^k \cdots \lp 1 - q^{n_a} \rp^k} =: \sum_{n \geq 0} c_{a,k,r}(n) q^n ,\\
			\mathcal{B}_{a,k,r,s}(q) :=& \sum_{1\leq n_1 < n_2 < \dots < n_a}  \dfrac{q^{r\lp n_1^{2}+\dots + n_a^2\rp  +s\lp n_1+\dots+n_a\rp}}{\lp 1 - q^{n_1} \rp^k \cdots \lp 1 - q^{n_a} \rp^k} =: \sum_{n \geq 0} d_{a,k,r,s}(n) q^n.
		\end{align*}
	For $k\geq1$, the function $\mathcal{A}_{a,k,r}$ appears in the literature of $q$-analogues of multiple zeta values, such as in~\cite{BK16}. Note that $\mathcal{A}_a = \mathcal{A}_{a,2,1}$. The leading term in the asymptotic expansion of $\mathcal{A}_{a,k,r}$ and $\mathcal{B}_{a,k,r,s}$ as $q\to 1$ is a multiple of the same multiple zeta value; the function $\mathcal{B}_{a,k,r,s}$ seems, however, to not have been previously studied in the context of $q$-analogues of multiple zeta values.

	We first give a generalization of \Cref{T:AOS} by showing that the functions $\mathcal{A}_{a,k,r}$ and $\mathcal{B}_{a,k,r,s}$ each form a family of approximations of certain eta quotients (depending on $k$ and $r$) as $a$ varies, which includes colored partitions as a special case. Then, using the Nekrasov--Okounkov hook length formula, we give a formula for $n$-th coefficient of $\mathcal{A}_{a,k,r}$ and $\mathcal{B}_{a,k,r,s}$ for small $n$.

	\begin{thm} \label{T: Main Theorem}
		Let $a,r,s \in \N$ and $k \in \Z$.
		\begin{enumerate}[label*=\rm(\arabic*),leftmargin=*]
			\item We have
			\begin{align*}
				q^{- \frac{ra(a+1)}{2}} \mathcal{A}_{a,k,r}(q) &= \prod_{n \geq 1} \dfrac{1}{\lp 1 - q^{rn} \rp \lp 1 - q^n \rp^k} + O\lp q^{a+1} \rp, \\
				q^{- \frac{a (a+1) ( r+2 a r+3 s)}{6} } \mathcal{B}_{a,k,r,s}(q) &= \prod_{n \geq 1} \dfrac{1}{\lp 1 - q^n \rp^k} + O\lp q^{a+1} \rp.
			\end{align*}
			\item If $n \leq a$, then we have
			\begin{align*}
				c_{a,k,r}\left(n+\frac{ra(a+1)}{2}\right) = \sum_{m=0}^{\left\lfloor\frac{n}{r}\right\rfloor} p(m) \sum_{\lambda\vdash(n-rm)} \prod_{h\in\mathcal H(\lambda)}\lrb{1+\dfrac{k-1}{h^2}}.
			\end{align*}
			\item If $n \leq a$, then we have
			\begin{align*}
				c_{a,k,1}\left(n+\frac{a(a+1)}{2}\right) = d_{a,k+1,r,s}\left( n + \dfrac{a(a+1)( r + 2ar+3s)}{6} \right) = \sum_{\lambda \vdash n} \prod_{h \in \mathcal H(\lambda)} \lp 1 + \dfrac{k}{h^2} \rp.
			\end{align*}
		\end{enumerate}
	\end{thm}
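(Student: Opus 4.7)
The strategy is to first prove (1) for both families via a substitution and careful matching of exponents, then deduce (2) and (3) from (1) using the Nekrasov--Okounkov hook length formula. For (1) on $\mathcal{A}$, I substitute $n_i = m_i+i$, converting the condition $n_1<\cdots<n_a$ with $n_i\geq 1$ into $0\leq m_1\leq\cdots\leq m_a$ with $\sum n_i = \frac{a(a+1)}{2}+\sum m_i$. Pulling out $q^{ra(a+1)/2}$ and expanding each $(1-q^N)^{-k}=\sum_{j\geq 0}\binom{j+k-1}{k-1}q^{jN}$ (valid for all $k\in\Z$ via the extended binomial convention) transforms $q^{-ra(a+1)/2}\mathcal{A}_{a,k,r}(q)$ into a sum over pairs $(\mathbf{m},\mathbf{j})$ with weight $\prod_i\binom{j_i+k-1}{k-1}$ and exponent $r\sum m_i+\sum j_i(m_i+i)$. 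Similarly, $\prod_{n\geq 1}(1-q^{rn})^{-1}(1-q^n)^{-k}$ expands as a sum over pairs $(\mathbf{l},\widetilde{\mathbf{j}})$ with exponent $r\sum nl_n+\sum n\widetilde{j}_n$. I then define an injection $\Phi:(\mathbf{m},\mathbf{j})\mapsto(\mathbf{l},\widetilde{\mathbf{j}})$ by $l_n:=|\{i:m_i=n\}|$ and $\widetilde{j}_n:=j_i$ when $n=m_i+i$ (and $0$ otherwise); one checks directly that $\Phi$ preserves both weight and exponent. The main obstacle is to verify that every right-hand-side tuple with exponent at most $a$ lies in $\operatorname{Im}\Phi$. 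If $\sum l_n>a$, then the exponent already satisfies $r\sum nl_n\geq\sum l_n>a$. Otherwise, setting $a_0:=a-\sum l_n\geq 0$, the set $S:=\{m_i+i\}$ contains $\{1,\ldots,a_0\}$ but excludes $a_0+1$ (since $m_i\geq 1$ for $i>a_0$ forces $m_i+i\geq a_0+2$), so any non-image tuple with $\widetilde{j}_{n_0}>0$ at $n_0\notin S$ has exponent at least $\sum l_n+(a_0+1)=a+1$, using $r\geq 1$. The remainder bound $O(q^{a+1})$ follows.

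The proof for $\mathcal{B}$ is analogous but much simpler. The same substitution, combined with $\sum n_i^2 = \sum m_i^2 + 2\sum im_i + \frac{a(a+1)(2a+1)}{6}$, yields
\[
q^{-a(a+1)(r+2ar+3s)/6}\mathcal{B}_{a,k,r,s}(q) = \sum_{0\leq m_1\leq\cdots\leq m_a}\prod_{i=1}^a\frac{q^{rm_i^2+(2ri+s)m_i}}{(1-q^{m_i+i})^k}.
\]
If some $m_i>0$ and $i^*$ is the smallest such index, the quadratic exponent alone is at least $\sum_{i\geq i^*}(r+2ri+s)\geq r+s+2ra\geq 2a+2>a$ (using $r,s\geq 1$), so that term contributes $O(q^{a+1})$. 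Hence only the all-zero term survives modulo $q^{a+1}$, yielding $\prod_{i=1}^a(1-q^i)^{-k}\equiv\prod_{n\geq 1}(1-q^n)^{-k}\pmod{q^{a+1}}$, as required.

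Finally, (2) and (3) follow from (1) via the Nekrasov--Okounkov formula
\[
\prod_{n\geq 1}(1-q^n)^{-k} = \sum_{\lambda}q^{|\lambda|}\prod_{h\in\mathcal{H}(\lambda)}\!\left(1+\frac{k-1}{h^2}\right),
\]
which holds for all $k\in\Z$. Combined with $\prod_{n\geq 1}(1-q^{rn})^{-1}=\sum_{m\geq 0}p(m)q^{rm}$, extracting $[q^n]$ (for $n\leq a$) from the asymptotic in (1) for $\mathcal{A}_{a,k,r}$ yields the convolution formula in (2). When $r=1$, the product for $\mathcal{A}_{a,k,1}$ collapses to $\prod_n(1-q^n)^{-(k+1)}$, which is also the limit in (1) for $\mathcal{B}_{a,k+1,r,s}$; applying Nekrasov--Okounkov with parameter $k+1$ produces the common hook-length value asserted in (3).
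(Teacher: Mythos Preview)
Your argument is correct and follows essentially the same route as the paper: both proofs use the substitution $n_i = m_i + i$ (the paper writes it as $n_j = \lambda_{a-j+1}+j$) and hinge on the observation that the set $S=\{m_i+i\}$ contains $\{1,\dots,a_0\}$ and omits $a_0+1$, where $a_0$ counts the zero $m_i$'s. The only difference is packaging: the paper multiplies $q^{-ra(a+1)/2}\mathcal{A}_{a,k,r}$ by $\prod_n(1-q^{rn})(1-q^n)^k$ and shows the result is $\equiv 1\pmod{q^{a+1}}$ via factor cancellation, whereas you expand both sides term-by-term and build an explicit weight- and exponent-preserving injection $\Phi$; these are two views of the same cancellation. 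Your treatment of $\mathcal{B}_{a,k,r,s}$ (observing that any nonzero $\mathbf m$ already forces the numerator exponent past $a$, so only the $\mathbf m=\mathbf 0$ term survives) is in fact more transparent than the paper's ``same argument with slight modifications'' remark, and parts (2)--(3) via Nekrasov--Okounkov are identical to the paper's.
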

We also study the behavior of the coefficients of $\mathcal{A}_{a,k,r}$ and $\mathcal{B}_{a,k,r,s}$ for varying $k$ and $r$ (and $s$). As a second result, we have the following.
\begin{thm}\label{thm:main2}
There exist polynomials $P_{a,\ell,n}$ and $P_{a,\ell,m,n}$ of degree at most $\ell$ such that
\begin{align*}
\mathcal{A}_{a,k,r}(q) &= \sum_{\ell\geq 0} \sum_{n\ge \frac{a(a+1)}2} P_{a,\ell,n}(k) q^{nr+\ell}, \quad
\mathcal{B}_{a,k,r,s}(q) = \sum_{\ell\geq 0} \sum_{n\ge \frac{a(a+1)}2} \sum_{\frac{n^2}{a}\leq m\leq n^2} P_{a,\ell,m,n}(k) q^{mr+ns+\ell} .
\end{align*}
\end{thm}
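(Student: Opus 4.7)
The plan is to expand each factor $(1-q^{n_i})^{-k}$ via the formal negative binomial series
$$(1-q^n)^{-k} = \sum_{j\geq 0} \binom{k+j-1}{j} q^{nj},$$
viewed as an identity of formal power series in $q$ whose coefficients are polynomials in $k$. The essential observation is that $\binom{k+j-1}{j} = \frac{k(k+1)\cdots(k+j-1)}{j!}$ is a polynomial in $k$ of degree exactly $j$. All of the work then reduces to a careful bookkeeping of which index combinations contribute to which coefficient.

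Substituting into the definition of $\mathcal{A}_{a,k,r}$ gives
$$\mathcal{A}_{a,k,r}(q) = \sum_{1\leq n_1<\cdots<n_a} \sum_{j_1,\dots,j_a\geq 0} \prod_{i=1}^a \binom{k+j_i-1}{j_i} q^{r(n_1+\cdots+n_a)+n_1 j_1+\cdots+n_a j_a}.$$
I would group terms by $n := n_1+\cdots+n_a$ and $\ell := n_1 j_1+\cdots+n_a j_a$, so the $q$-exponent becomes $rn+\ell$, and define $P_{a,\ell,n}(k)$ to be the sum of $\prod_i \binom{k+j_i-1}{j_i}$ over all admissible tuples with these values of $n$ and $\ell$. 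Since $n_i\geq 1$, we have $\sum_i j_i \leq \sum_i n_i j_i = \ell$, so every summand has degree at most $\ell$ in $k$, giving $\deg P_{a,\ell,n}\leq \ell$. The constraint $n\geq a(a+1)/2$ is immediate from $n_i\geq i$.

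For $\mathcal{B}_{a,k,r,s}$, the same expansion introduces an additional grouping variable $m := n_1^2+\cdots+n_a^2$, so the $q$-exponent is $rm+sn+\ell$, and $P_{a,\ell,m,n}(k)$ is defined analogously. The degree bound $\deg P_{a,\ell,m,n}\leq \ell$ is identical. For the range of $m$, the Cauchy--Schwarz inequality $\bigl(\sum_i n_i\bigr)^2 \leq a\sum_i n_i^2$ yields the lower bound $m\geq n^2/a$, while the identity $\bigl(\sum_i n_i\bigr)^2 = \sum_i n_i^2 + 2\sum_{i<j}n_i n_j$ together with $n_i\geq 0$ gives the upper bound $m\leq n^2$.

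There is no real obstacle: the content is almost entirely bookkeeping, and the only nontrivial inputs are the polynomiality of $\binom{k+j-1}{j}$ in $k$, the elementary bound $\sum j_i \leq \sum n_i j_i$, and the Cauchy--Schwarz bound for the $\mathcal{B}$-case. The main thing to be careful about is verifying that the index sets appearing in each $P_{a,\ell,n}$ and $P_{a,\ell,m,n}$ are finite (which they are, since for fixed $\ell$ only finitely many $(j_1,\dots,j_a)$ with $\sum n_i j_i = \ell$ occur, and each admissible $n_i\leq n$), so that these sums define \emph{polynomials} in $k$ rather than formal power series.
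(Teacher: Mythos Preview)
Your proposal is correct and essentially identical to the paper's own proof: the paper expands each factor via $\frac{q^{rn}}{(1-q^n)^k}=\sum_{m\ge 1}\binom{m+k-2}{m-1}q^{(r-1)n+mn}$ (your expansion with the index shift $m=j+1$), groups by $n=\sum n_i$ and $\ell=\sum n_i(m_i-1)$, and bounds the degree by $\sum(m_i-1)\le\ell$ exactly as you do. For the $\mathcal{B}$-case the paper simply remarks that the inequalities $n^2/a\le m\le n^2$ are ``standard'', so your explicit invocation of Cauchy--Schwarz and the finiteness check are in fact slightly more detailed than what the paper provides.
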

As a direct corollary we obtain:
\begin{cor}\label{cor:main2}
There exist polynomials $\mathcal{P}_{a,r,n}$ and $\mathcal{P}_{a,r,s,n}$ of degree $n-\frac{1}{2}ra(a+1)$ and $n-{\frac{1}{2}a (a+1) (r + 2ar + 3s)}$, respectively, such that
\begin{align*}
\mathcal{A}_{a,k,r}(q) &= \sum_{n\ge0} \mathcal{P}_{a,r,n}(k) q^n, \quad
\mathcal{B}_{a,k,r,s}(q) = \sum_{n\geq 0} \mathcal{P}_{a,r,s,n}(k) q^n.
\end{align*}
\end{cor}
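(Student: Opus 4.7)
The plan is to regroup the double and triple sums in \Cref{thm:main2} by the total $q$-exponent. For the $\mathcal{A}$-series, I set $N := nr + \ell$ to obtain
\[
\mathcal{A}_{a,k,r}(q) = \sum_{N \geq 0} \mathcal{P}_{a,r,N}(k)\, q^N, \quad \mathcal{P}_{a,r,N}(k) := \sum_{\substack{n \geq \frac{a(a+1)}{2} \\ \ell := N-nr \geq 0}} P_{a,\ell,n}(k),
\]
which is a finite sum and hence a polynomial in $k$. Since $\deg P_{a,\ell,n} \leq \ell$ by \Cref{thm:main2}, we deduce $\deg \mathcal{P}_{a,r,N} \leq N - \frac{ra(a+1)}{2}$, the maximum over $n$ being attained at the smallest admissible value $n = \frac{a(a+1)}{2}$. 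The $\mathcal{B}$-series is handled analogously by setting $N := mr + ns + \ell$; the minimum of $mr+ns$ over admissible $(m,n)$ is realized at $(n_1,\dots,n_a)=(1,\dots,a)$ and coincides with the exponent shift in \Cref{T: Main Theorem}(1).

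To upgrade the degree bound to an equality, I would return to the defining series. Expanding $\lp 1 - q^{n_i} \rp^{-k} = \sum_{m_i \geq 0} \binom{k+m_i-1}{m_i} q^{n_i m_i}$, the coefficient of $q^N$ in $\mathcal{A}_{a,k,r}(q)$ equals
\[
\sum_{\substack{1 \leq n_1 < \dots < n_a,\ m_1, \dots, m_a \geq 0 \\ r\sum n_i + \sum n_i m_i = N}} \prod_{i=1}^a \binom{k+m_i-1}{m_i},
\]
where each summand is a polynomial in $k$ of degree $\sum_i m_i$ with positive leading coefficient $\prod_i \frac{1}{m_i!}$. Using $n_i \geq i \geq 1$ gives
\[
N = r\sum n_i + \sum n_i m_i \ \geq\ r \cdot \tfrac{a(a+1)}{2} + \sum m_i,
\]
so $\sum m_i \leq N - \tfrac{ra(a+1)}{2}$, with equality forcing $n_i = i$ for all $i$, $m_1 = N - \tfrac{ra(a+1)}{2}$, and $m_i = 0$ for $i \geq 2$. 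Since this maximizer is unique, no cancellation occurs in the top coefficient (which equals $\frac{1}{m_1!} \neq 0$), so the degree is exactly $N - \tfrac{ra(a+1)}{2}$. The identical unique-maximizer argument, now using also $\sum n_i^2 \geq \sum i^2$, establishes the corresponding statement for $\mathcal{B}_{a,k,r,s}$.

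The main obstacle is essentially bookkeeping: verifying that the shifts arising from the smallest admissible tuple $(n_1, \dots, n_a) = (1, \dots, a)$ match those in \Cref{T: Main Theorem}(1), and observing that for $N$ below this shift the polynomial $\mathcal{P}_{a,r,N}$ (respectively $\mathcal{P}_{a,r,s,N}$) is identically zero, so the degree statement is vacuous there.
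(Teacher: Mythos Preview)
Your proposal is correct and follows essentially the same route as the paper: define $\mathcal{P}_{a,r,N}$ by collecting terms in \Cref{thm:main2}, then expand the defining series via $(1-q^{n_i})^{-k}=\sum_{m_i\ge0}\binom{k+m_i-1}{m_i}q^{n_im_i}$, observe that each summand is a polynomial in $k$ of degree $\sum m_i$ with positive leading coefficient, and identify the unique maximizer $(n_i,m_i)=(i,0)$ for $i\ge2$ with $m_1=N-\tfrac{ra(a+1)}{2}$ to pin down the exact degree. The only cosmetic difference is that the paper absorbs the factor $q^{r\sum n_i}$ into the binomial expansion (writing $\binom{m_j+k-r-1}{m_j-r}$ with $m_j\ge1$ and constraint $\sum m_jn_j=N$), whereas you keep it separate; the resulting maximization and leading coefficient $\frac{1}{(N-\frac{ra(a+1)}{2})!}$ coincide.
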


\begin{example*}
We have
\begin{align*}
\mathcal{A}_{2,k,r}(q) &= q^{3r} + k q^{3r+1} + \frac{k(k+3)}{2}q^{3r+2} +q^{4r} + k q^{4r+1} +\frac{k\lrb{k+1}}{2} q^{4r+2} + 2 q^{5r} +kq^{5r+1} + \ldots,
\\
\mathcal{A}_{2,k,1}(q) &= q^3+(k+1)q^4+\frac{(k+1)(k+4)}{2}q^5+\frac{k^3+12 k^2+11 k+12}{6}q^6 + \ldots, \\ 
 \mathcal{B}_{2,k,r,s}(q) &=  q^{5 r+3 s}+k q^{5 r+3 s+1}+\frac{k(k+3)}{2}q^{5 r+3 s+2} +\frac{k(k^2+9k+2)}{6}q^{5 r+3 s+3}  +q^{10 r+4 s}\\
						&\hspace{1cm} +k q^{10 r+4 s+1} + \frac{k(k+1)}{2} q^{10 r+4 s+2}  + \ldots,\\
\mathcal{B}_{2,k,1,1}(q) &= q^8+k q^9+\frac{k(k+3)}2q^{10}+\frac{k\left(k^2+9 k+2\right)}6 q^{11}+ \frac{k^4+18 k^3+35 k^2+18 k}{24}  q^{12}+\ldots. 
\end{align*}
\end{example*}

	In line with the work of Andrews--Rose \cite{AR2013} and Amdeberhan--Ono--Singh \cite{AOS2023}, we prove that many examples of the $\mathcal{A}_{a,k,r}$ are quasimodular forms. Note that the functions $\mathcal{A}_{a,2r,r}$ were considered by Okounkov in the context of computing generating functions of Hilbert schemes \cite{Oko14}.

	\begin{thm}\label{T:Quasimodular}
		If $a,r \in \N$, then $\mathcal{A}_{a,2r,r}$ is a quasimodular form of (mixed) weight $2ar$.
	\end{thm}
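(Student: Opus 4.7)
My plan is to reduce $\mathcal{A}_{a,2r,r}$ to an $\IQ$-linear combination of Bachmann--K\"uhn brackets of weight at most $2ar$, and then invoke the main theorem of \cite{BK16}, which asserts that every such bracket is a quasimodular form on $\SL_2(\Z)$. The starting observation, which explains the special role of the combination $k=2r$, is a binomial expansion. Using $(1-q^n)^{-2r}=\sum_{j\ge 0}\binom{j+2r-1}{2r-1}q^{jn}$ and shifting the index $j\mapsto m-r$ yields
\[
\frac{q^{rn}}{(1-q^n)^{2r}} = \sum_{m\ge 1} P_r(m)\,q^{mn}, \qquad P_r(m):=\binom{m+r-1}{2r-1}=\frac{m}{(2r-1)!}\prod_{j=1}^{r-1}(m^2-j^2),
\]
where the lower bound may be taken as $m\ge 1$ rather than $m\ge r$ because $P_r$ vanishes at $m=1,\dots,r-1$. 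The crucial structural feature is that $P_r$ is an \emph{odd} polynomial of degree $2r-1$, so it can be written as $P_r(m)=\sum_{j=1}^{r} c_{r,j}\,m^{2j-1}$ with explicit constants $c_{r,j}\in\IQ$.

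Next I would substitute this into each of the $a$ factors appearing in $\mathcal{A}_{a,2r,r}$ and interchange summations to get
\[
\mathcal{A}_{a,2r,r}(q) = \sum_{1\le j_1,\ldots,j_a\le r} c_{r,j_1}\cdots c_{r,j_a}\,T_{2j_1,\ldots,2j_a}(q),
\]
where
\[
T_{s_1,\ldots,s_a}(q):=\sum_{1\le n_1<\cdots<n_a}\,\sum_{m_1,\ldots,m_a\ge 1} m_1^{s_1-1}\cdots m_a^{s_a-1}\,q^{n_1m_1+\cdots+n_am_a}.
\]
Up to a nonzero rational scalar and a relabeling that reverses the chain $n_1<\cdots<n_a$, the multisum $T_{s_1,\ldots,s_a}$ is exactly the bracket $[s_a,\ldots,s_1]$ of \cite{BK16}, which is a quasimodular form on $\SL_2(\Z)$ of mixed weight $s_1+\cdots+s_a$. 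Since each $s_i=2j_i$ in our expansion lies in $\{2,4,\ldots,2r\}$, every contributing $T$ has weight at most $2ar$, with the maximum $2ar$ attained at $j_1=\cdots=j_a=r$. This would give the claim: $\mathcal{A}_{a,2r,r}$ is quasimodular of mixed weight $2ar$.

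The main obstacle will be verifying the dictionary with \cite{BK16}: one must check that swapping the ordering convention $n_1<\cdots<n_a\leftrightarrow n_1>\cdots>n_a$ and dropping the factorial normalization $\prod(s_i-1)!$ do not affect quasimodularity or the stated weight. Both are routine --- the first is a cosmetic relabeling, the second a nonzero rescaling --- so once the dictionary is in place the theorem follows directly. A self-contained alternative would be to induct on the depth $a$, using quasi-shuffle relations to express $T_{s_1,\ldots,s_a}$ in terms of lower-depth sums together with the fact that the depth-one series $\sum_{n,m\ge 1} m^{s-1}q^{mn}$ with $s$ even generate the quasimodular algebra; this is workable but noticeably less streamlined than appealing to \cite{BK16}.
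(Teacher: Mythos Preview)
Your argument is correct. Both your proof and the paper's hinge on the same key observation: the polynomial $P_r(m)=\binom{m+r-1}{2r-1}$ is odd in~$m$, which is exactly what singles out the parameter choice $k=2r$. Where you diverge is in how the depth-$a$ sum is handled. You expand each of the $a$ factors separately and land on a $\IQ$-linear combination of Bachmann--K\"uhn brackets $[2j_a,\ldots,2j_1]$ with arbitrary even arguments, then invoke the theorem from \cite{BK16} that such brackets are quasimodular. The paper instead keeps the factors intact and applies the quasi-shuffle exponential identity (their Proposition~2.2) to the single letter $(2r,r)$, which expresses $\mathcal{A}_{a,2r,r}$ as a polynomial in the depth-one functions $\mathcal{A}_{1,2nr,nr}$; the odd-polynomial observation then shows each of these is a combination of even-weight Eisenstein series. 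In effect, the paper exploits the symmetry (all $a$ factors identical) to reduce directly to depth one, whereas your route trades this symmetry away and must then appeal to the stronger \cite{BK16} result for brackets with mixed even arguments. Your ``self-contained alternative'' via quasi-shuffle is essentially the paper's argument, though the paper packages it through the exponential formula rather than an explicit induction on depth. Both approaches yield the same mixed-weight bound $2ar$; your route is a clean black-box application, while the paper's is more self-contained and makes the Eisenstein decomposition of $\mathcal{A}_{a,2r,r}$ explicit.
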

For other choices of $k$, the function $\mathcal{A}_{a,k,r}$ fails to be quasimodular, but nevertheless can be written as a polynomial in Eisenstein series. Note, however, that in this case both the (quasi)modular Eisenstein series of even weight and the odd weight Eisenstein series appear, so in all cases $\mathcal{A}_{a,k,r}$ can be represented using divisor sums. For example, we have
		\begin{align*}
			\mathcal{A}_{2,4,2} &= \frac{1}{504} G_4^2+\frac{1}{720} G_6-\frac{1}{36} G_4G_2-\frac{53}{8640}G_4+\frac{1}{72} G_2^2+\frac{293}{60480}G_2+\frac{649}{3225600}, \\
			\mathcal{A}_{2,4,3} &= \frac{1}{504} G_4^2+\frac{1}{720} G_7-\frac{1}{12} G_4G_3+\frac{1}{8} G_3^2+\frac{1}{18} G_4G_2-\frac{1}{144} G_6+\frac{1}{72} G_5-\frac{1}{6} G_3G_2 \\
			&\qquad +\frac{1}{18} G_2^2-\frac{23}{8640}G_4-\frac{7}{320} G_3+\frac{493}{30240}G_2+\frac{5707}{9676800}.
		\end{align*}

Due to its connections with quasimodular forms, it can be shown that the coefficients of $\mathcal{A}_{a,2r,r}$ satisfy many congruences. In particular, we can prove the following corollary to \Cref{T:Quasimodular} along with known results on $p$-adic modular forms and quasimodular forms.
\begin{cor} \label{C:Congruences}
	Let $a,r,m \in \N$. There are infinitely many non-nested arithmetic progressions $An+B$ such that
	\begin{align*}
		c_{a,2r,r}\lp An+B \rp \equiv 0 \pmod{m}.
	\end{align*}
\end{cor}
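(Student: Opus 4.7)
The plan is to combine the quasimodularity established in Theorem \ref{T:Quasimodular} with classical arithmetic results on Fourier coefficients of modular forms. Since the ring of (mixed-weight) quasimodular forms on $\mathrm{SL}_2(\Z)$ equals $\IQ[E_2,E_4,E_6]$, Theorem \ref{T:Quasimodular} yields a decomposition
\[
\mathcal{A}_{a,2r,r} \;=\; \sum_{j=0}^{ar} f_j\, E_2^j,
\]
where each $f_j \in \IQ[E_4,E_6]$ is a holomorphic modular form on $\mathrm{SL}_2(\Z)$. After clearing denominators, we may assume all Fourier coefficients appearing are integral.

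To leverage congruence theorems for genuine modular forms, I would next pass from $\mathcal{A}_{a,2r,r}$ to a holomorphic modular form modulo each prime power $p^N \| m$. For primes $p \geq 5$, this uses Serre's theory of $p$-adic modular forms together with the Kummer-type congruence $E_2 \equiv E_{p+1} \pmod{p}$, iterated via $E_{k+p^{n-1}(p-1)} \equiv E_k \pmod{p^n}$; for $p \in \{2,3\}$ one uses the standard workarounds of passing to a congruence subgroup and replacing $E_2$ by suitable holomorphic analogues (e.g.\ $E_2(\tau) - \ell E_2(\ell\tau)$ is modular on $\Gamma_0(\ell)$). This produces, for every $N \geq 1$, a holomorphic modular form $F_{p,N}$ of integer weight on some $\Gamma_0(M_p)$ with integer Fourier coefficients satisfying $\mathcal{A}_{a,2r,r} \equiv F_{p,N} \pmod{p^N}$.

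Finally, I would apply Treneer's theorem (a broad extension of earlier congruence results of Ono and Ahlgren--Ono), which asserts that for any holomorphic modular form of integer weight on $\Gamma_0(M)$ with integral Fourier coefficients and any modulus $m'$, the Fourier coefficients vanish modulo $m'$ along infinitely many non-nested arithmetic progressions. Applying this with $m' = p^N$ to each $F_{p,N}$ for every prime power $p^N \| m$, and combining via the Chinese Remainder Theorem, gives the desired arithmetic progressions for $c_{a,2r,r}$. The main obstacle is the bridging step, i.e.\ the clean reduction from the quasimodular $\mathcal{A}_{a,2r,r}$ to a holomorphic modular form modulo $p^N$, which requires the $p$-adic framework of Serre to absorb the powers of $E_2$; once this is in place, Treneer's machinery delivers the congruences.
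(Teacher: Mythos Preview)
Your proposal is correct and follows essentially the same route as the paper: reduce the quasimodular form $\mathcal{A}_{a,2r,r}$ to a genuine modular form modulo each prime power by absorbing $E_2$ via $p$-adic modular form theory, then invoke a density-of-congruences result for Fourier coefficients, and finish with the Chinese Remainder Theorem. The only differences are cosmetic: the paper handles all primes at once by citing \cite[Lemma~5.2]{AOS2023} (so your separate treatment of $p\in\{2,3\}$, which is also a bit imprecise as stated, is unnecessary), and the paper invokes Serre's result on Hecke operators annihilating modular forms modulo~$m$ directly rather than the more general packaging of Treneer.
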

\noindent In \Cref{conj:congr}, we present a large family of congruences for $c_{a,k,r}(n)$ and $d_{a,k,r,s}(n)$.

We now turn to modularity properties of $\mathcal{B}_{a,k,r,s}$; for more precise results, see \Cref{T:Trans} and \Cref{T:Ell}. We call a function $\widehat f(\tau,\overline\tau)$ a \emph{completion} of $f(\tau)$ if\footnote{Here and throughout, we consider $\tau$ and $\overline\tau$ as independent variables. If clear from the context we also just write $\widehat f(\tau)$ instead of $\widehat f(\tau,\overline\tau)$.}  $\lim_{\overline\tau\to-i\infty}\widehat f(\tau,\overline\tau)=f(\tau)$.
\begin{thm}\label{T:Completing}Let $k,r,s\in \N$.
\begin{enumerate}[label*=\rm(\arabic*),leftmargin=*]
			\item The function $\mathcal{B}_{a,k,r,s}$ is a polynomial in $\mathcal{B}_{k,r,s}:=\mathcal{B}_{1,k,r,s}$ of degree at most $a$.
			\item For $0\leq s \leq k$, the functions $\mathcal{B}_{k,r,s} + \mathcal{B}_{k,r,k-s}$ are a mixed weight linear combination of {\rm(a)}~functions admitting a modular completion and {\rm(b)} powers of $q$ times theta functions.
			\item If additionally $k=s+1$, part {\rm(b)} in the above linear combination vanishes.
\end{enumerate}
	\end{thm}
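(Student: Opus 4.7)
The proof splits into three parts, of which (2) is the most involved. For (1), set $x_n := q^{rn^2+sn}/(1-q^n)^k$; then $\mathcal{B}_{a,k,r,s}(q) = e_a(x_1, x_2, \ldots)$ is the $a$-th elementary symmetric function in the sequence $(x_n)_{n \geq 1}$. Newton's identities express $e_a$ as a polynomial of total weighted degree $a$ in the power sums $p_j = \sum_{n \geq 1} x_n^j$ (with $p_j$ assigned weight $j$), and since $x_n^j = q^{jrn^2+jsn}/(1-q^n)^{jk}$, each $p_j$ equals $\mathcal{B}_{jk,jr,js}$. Hence $\mathcal{B}_{a,k,r,s}$ lies in the polynomial algebra generated by $\{\mathcal{B}_{jk,jr,js}\}_{j=1}^{a}$ with total weighted degree $a$, which is the content of (1).

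For (2), the cleanest approach is to perform a partial-fraction decomposition of the rational function of $y := q^n$,
\[ \frac{y^s + y^{k-s}}{(1-y)^k} = P_{k,s}(y) + \sum_{j=1}^{k} \frac{c_{k,s,j}}{(1-y)^j}, \]
where $P_{k,s} \in \mathbb{Q}[y]$ is the polynomial quotient and $c_{k,s,j} \in \mathbb{Q}$ are the partial-fraction coefficients. Substituting $y = q^n$, multiplying by $q^{rn^2}$, and summing over $n \geq 1$ yields
\[ \mathcal{B}_{k,r,s}(q) + \mathcal{B}_{k,r,k-s}(q) = \sum_{n \geq 1} q^{rn^2}\,P_{k,s}(q^n) + \sum_{j=1}^{k} c_{k,s,j}\, \mathcal{B}_{j,r,0}(q). \]
If $P_{k,s}(y) = \sum_i a_i y^i$, the first term is a finite $\mathbb{Q}$-linear combination of partial theta series $\Theta_r(i;q) := \sum_{n \geq 1} q^{rn^2+in}$; completing the square gives $\Theta_r(i;q) = q^{-i^2/(4r)} \sum_{n \geq 1} q^{r(n+i/(2r))^2}$, which, assembled together with its $n \mapsto -n$ companion, is a $q$-power times a Jacobi theta constituent, realizing part (b). The second sum consists of higher-depth Appell--Lerch-type $q$-series $\mathcal{B}_{j,r,0}$; each of these admits a non-holomorphic modular completion via the Zwegers $\mu$-function technology and its higher-depth generalisations, realizing part (a). For (3), when $k = s+1$ we have $k - s = 1$, so the numerator $y^{k-1} + y$ has degree $k - 1 < k$; polynomial division thus produces $P_{k,s} \equiv 0$, and the theta-remainder in (b) disappears identically.

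The main obstacle is confirming the modular completion claim in part (2)(a): while the $j = 1$ case is classical Zwegers theory, for $j \geq 2$ the higher-order pole at $q^n = 1$ pushes one outside classical mock modularity into the depth-$j$ mock modular / iterated Eichler integral framework. The delicate point is coordinating these completions across the coefficients $c_{k,s,j}$ so that the full linear combination $\sum_j c_{k,s,j}\,\mathcal{B}_{j,r,0}$ assembles into a single mixed-weight completion; this is presumably exactly the content of the finer transformation and ellipticity statements alluded to at the end of the excerpt.
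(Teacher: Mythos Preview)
Your argument for part~(1) via Newton's identities is correct and is exactly the content of the paper's quasi-shuffle argument (Proposition~4.1); the two are equivalent formulations of the same combinatorial fact.

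For parts~(2) and~(3), however, your approach diverges from the paper's and contains a genuine gap. Your partial-fraction reduction to a linear combination of $\mathcal{B}_{j,r,0}$ is valid as a $q$-series identity, but your justification that each $\mathcal{B}_{j,r,0}$ admits a modular completion is incorrect: these are \emph{not} higher-depth mock modular forms or iterated Eichler integrals. A pole of order $j$ at $q^n=1$ corresponds to taking $z$-derivatives of a depth-one Appell function, not to raising the depth. The paper's route is entirely different: it expresses the \emph{symmetrized} sum $\mathcal{B}_{k,r,s}+\mathcal{B}_{k,r,k-s}$ directly as a linear combination of $z$-derivatives of $\zeta^{-K}A_{2r}(z,-K\tau;\tau)$ at $z=0$ (Lemma~4.2), where $K=k+r-s-1$, and then uses Zwegers' completion $\widehat{A}_{2r}$ together with the elliptic transformation to build the completions $\widehat\psi_n$. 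The symmetrization is not incidental---it is exactly what arises from the bilateral sum in the Appell function, and your reduction to the unsymmetrized $\mathcal{B}_{j,r,0}$ destroys this structure.

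Your argument for~(3) also reveals the problem: the polynomial remainder $P_{k,s}$ vanishes for \emph{every} $0<s<k$, not just for $s=k-1$, so your proof cannot explain why $k=s+1$ is the distinguished case. In the paper, the condition $k=s+1$ is equivalent to $K=r$, and this is precisely when the elliptic shift lemma (Lemma~4.5) relating $A_{2r}(z,-K\tau;\tau)$ to $A_{2r}(z,-r\tau;\tau)$ produces no extra terms; for $K\neq r$ that lemma yields explicit $q$-power--times--theta corrections. In your framework those corrections are hidden inside the individual $\mathcal{B}_{j,r,0}$, whose completability you have asserted but not established.
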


\begin{example}\label{E:Psi}
Letting $\tau=u+iv$ throughout, we have
\begin{equation*}
\mathcal{B}_{2,2,1,1}(q) = \frac{1}{2}\mathcal{B}_{2,1,1}^2(q) -\frac{1}{2} \mathcal{B}_{4,2,2}(q), \qquad
2\mathcal{B}_{2,1,1}(q) = {\left( \frac{\pi}{6} + \frac{1}{v}\right)i\psi_{-1}^+(\tau) - \frac{1}{2\pi i} \psi_1^+ }(\tau)
\end{equation*}
for some functions $\psi_k^+(\tau)$ which admit a modular completion $\widehat \psi_k(\tau,\overline\tau)$ of weight $k+1$ (note $k=2=s+1$). 
We have
\begin{align*}
\widehat\psi_{-1}(\tau,\overline\tau) &= \psi_{-1}^+(\tau) = -\frac1{2\pi i}, \\
 \widehat\psi_1(\tau,\overline\tau) &= \psi_1^+(\tau) - 8\pi iT(\tau) \sum_{n\in\Z+\frac12} n\left(\sgn(n)-E\left(n\sqrt{2v}\right)\right) q^{-\frac{n^2}2} + \frac{8\sqrt2i}{\sqrt v} |T(\tau)|^2,
\end{align*}
where
\begin{equation*}
	T(\tau) := \sum_{n\ge0} q^{\frac18(2n+1)^2} = \frac{\eta(2\tau)^2}{\eta(\tau)}
\end{equation*}
is a theta function of weight $\frac12$, and for $x\in \R$ we define
\begin{align}
\label{eq:E}
E(x):=2\int_{0}^x e^{-\pi t^2} dt.
\end{align}
\end{example}


	The remainder of the paper is organized as follows. In \Cref{Prelims}, we provide certain preliminaries on hook numbers, quasimodular forms, $q$-multiple zeta values, Zwegers' higher Appell functions, and generalized Stirling numbers. In \Cref{Proofs}, we prove the results regarding the limiting behaviour and the coefficient structure of $\mathcal{A}_{a,k,r}$ and $\mathcal{B}_{a,k,r,s}$ for varying parameters. We also show that $\mathcal{A}_{a,2r,r}$ are quasimodular and that their coefficients satisfy many congruences. In \Cref{sec:completing}, we prove \Cref{T:Completing} using higher Appell functions. In \Cref{sec:open}, we sketch how the arguments extend to variations of the functions~$\mathcal{A}_{a,k,r}$ and~$\mathcal{B}_{a,k,r,s}$ and provide several conjectures.

	\section*{Acknowledgements}

	{The authors thank Ken Ono for bringing their attention to these questions and Henrik Bachmann and Annika Burmester for several remarks. This work was funded by the European Research Council (ERC) under the European Union's Horizon 2020 research and innovation programme (grant agreement No. 101001179) and by the SFB/TRR 191 “Symplectic Structure in Geometry, Algebra and Dynamics”, funded by the DFG (Projektnummer 281071066 TRR 191).

	\section{Preliminaries} \label{Prelims}

	\subsection{Hook lengths}
	For a partition $\lambda = \lp \lambda_1, \dots, \lambda_\ell \rp$ of $n$, its {\it Young diagram} is a collection of $n$ boxes arranged in $\ell$ left-adjusted rows, with row $j$ containing $\lambda_j$ boxes. Abusing notation, we identify $\lambda$ with its Young diagram. For each cell in $\lambda$, say at row $j$ and column $k$, the {\it hook} $H(j,k)$ is the collection of boxes including the given box, along with all boxes directly beneath or to the right of this box, such that the overall shape is generically an inverted $L$-shape. The {\it hook length} $h(j,k)$ is the number of boxes contained in the hook. \Cref{Figure} gives an example of the Young diagram of $\lambda = (4,3,2)$ with each box labeled with its hook number. For a partition $\lambda$, we let $\mathcal{H}\lp \lambda \rp$ denote the multiset of all hook lengths in~$\lambda$.
	\begin{figure}[hbt!]
		\centering $$\begin{ytableau} 6&5&3&1 \\ 4&3&1 \\ 2&1  \end{ytableau}$$
		\caption{{\small Hook numbers of the partition $\lambda=(4,3,2)$}}
		\label{Figure}
	\end{figure}

	We also require the celebrated Nekrasov--Okounkov hook length formula (see (6.12) of~\cite{NO}).
	\begin{thm}[Nekrasov--Okounkov] \label{T: NO}
		For $z\in\C$ we have
		\begin{align*}
		\sum_{\lambda \in \mathscr P} q^{|\lambda|} \prod_{h \in \mathcal{H}\lp \lambda \rp} \lp 1 - \dfrac{z}{h^2} \rp = \prod_{n \geq 1} \lp 1 - q^n \rp^{z-1},
		\end{align*}
		where $\mathscr{P}$ is the set of all partitions.
	\end{thm}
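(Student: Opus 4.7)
The strategy I would pursue is interpolation in $z$ combined with Macdonald's affine Lie algebra identities. First, I would observe that both sides, as formal $q$-series, have the coefficient of $q^n$ given by a polynomial in $z$ of degree at most $n$. On the LHS, each partition $\lambda \vdash n$ has exactly $n$ hooks and contributes $\prod_{h \in \mathcal{H}(\lambda)}(1 - z/h^2)$, a polynomial of degree $n$ in $z$. On the RHS, expanding $(1-q^m)^{z-1}$ via the generalized binomial series and collecting contributions to $q^n$ produces a polynomial of degree at most $n$ in $z$ (any contribution $\prod_m \binom{z-1}{j_m}(-q^m)^{j_m}$ to $q^n$ satisfies $\sum_m j_m \leq \sum_m m j_m = n$). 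Since two polynomials of degree $\leq n$ agreeing at $n+1$ points coincide, it suffices to verify the identity at $z = t^2$ for all $t \in \N$.

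Second, I would exploit the Littlewood $t$-quotient decomposition, a bijection between partitions $\lambda$ and pairs $(\mu, \vec{\lambda})$, where $\mu$ is a $t$-core and $\vec{\lambda} = (\lambda^{(0)}, \ldots, \lambda^{(t-1)})$ is a $t$-tuple of partitions, satisfying $|\lambda| = |\mu| + t|\vec{\lambda}|$. The central property is that the multiset of hook lengths of $\lambda$ divisible by $t$ equals $\{t h : h \in \mathcal{H}(\lambda^{(i)}),\ i = 0,\ldots,t-1\}$. Consequently, the sub-product over hooks divisible by $t$ transforms under $z = t^2$ into
\[
\prod_{\substack{h \in \mathcal{H}(\lambda) \\ t \mid h}}\!\!\left(1 - \tfrac{t^2}{h^2}\right) = \prod_{i=0}^{t-1}\prod_{h' \in \mathcal{H}(\lambda^{(i)})}\!\!\left(1 - \tfrac{1}{h'^2}\right),
\]
which vanishes unless every $\lambda^{(i)}$ is empty, since a nonempty partition always has at least one hook of length $1$. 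Hence the LHS at $z = t^2$ collapses to a sum over $t$-cores alone.

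Third, I would invoke the classical beta-number (abacus) parametrization, which identifies $t$-cores $\mu$ with charge vectors $\vec{n} \in \Z^t$ satisfying $\sum_i n_i = 0$, expressing $|\mu|$ as an explicit quadratic form in $\vec{n}$ and the residual hook-length product $\prod_{h \in \mathcal{H}(\mu)}(1 - t^2/h^2)$ as a Vandermonde-like polynomial in the $n_i$'s (up to normalization, $\prod_{i<j}(n_i-n_j)^2/\prod_i(\text{factorials})$). The resulting sum over charge vectors is precisely the sum side of the Macdonald identity for the affine root system $\tilde{A}_{t-1}$, which expresses $\eta(\tau)^{t^2-1}$ — and therefore $q^{(t^2-1)/24}\prod_{n\ge1}(1-q^n)^{t^2-1}$ — as a weighted theta series on exactly this lattice. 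Matching these expressions establishes the identity at $z = t^2$, completing the interpolation.

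The main obstacle is the combinatorial bookkeeping in the third step: translating the hook-length product of a $t$-core, via the abacus bijection, into the Weyl denominator factor appearing in Macdonald's identity, and tracking normalization constants so that the $q$-power shift $q^{(t^2-1)/24}$ from $\eta$ is correctly absorbed. By contrast, the polynomial interpolation principle and the Littlewood-decomposition reduction are relatively clean formal manoeuvres, and the bulk of the technical effort is concentrated in making the abacus-to-Vandermonde identification precise.
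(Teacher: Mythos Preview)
The paper does not prove this theorem at all: it is stated in the preliminaries with a citation to Nekrasov--Okounkov \cite{NO} (equation (6.12)) and then simply used as input. So there is no ``paper's own proof'' to compare against.

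Your proposed strategy is correct and is essentially Han's proof of the Nekrasov--Okounkov formula. The three steps --- polynomial interpolation in $z$ reducing to $z=t^2$, the Littlewood $t$-quotient bijection killing all partitions that are not $t$-cores, and the identification of the residual $t$-core sum with the Macdonald identity for $\tilde A_{t-1}$ --- are exactly the architecture of that argument. Your assessment that the delicate work is concentrated in the third step (matching the hook product over a $t$-core to the Weyl-denominator/Vandermonde factor via the abacus parametrization, and tracking the $q^{(t^2-1)/24}$ shift) is accurate; the first two steps are indeed routine. One small point of presentation: in step~2, it would be cleaner to say explicitly that the full product $\prod_{h\in\mathcal H(\lambda)}(1-t^2/h^2)$ factors as the product over $t\mid h$ (which forces $\vec\lambda=\varnothing$) times the product over $t\nmid h$ (which, for a $t$-core, is the ``residual'' factor you invoke in step~3), rather than leaving the non-divisible hooks implicit until the next paragraph.
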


\subsection{Modular forms and quasimodular forms}
For $k\in\N$ even, let
	\[
		E_k (\t) := 1-\frac{2k}{B_k} \sum_{n\geq 1} \sigma_{k-1}(n)q^n = 1-\frac{2k}{B_k} \sum_{m,n\geq 1} m^{k-1}q^{mn}
	\]
be the \emph{Eisenstein series of weight $k$}, where $B_{k}$ denotes the $k$-th Bernoulli number. If $k\ge4$, then $E_k$ is a modular form of weight $k$. For $k=2$, $E_2$ is a quasimodular form. That is, for $\begin{psmallmatrix}a&b\\c&d\end{psmallmatrix}\in\SL_2(\Z)$,
\begin{equation*}
	E_2\left(\frac{a\tau+b}{c\tau+d}\right) = (c\tau+d)^2 E_2(\tau) - \frac{6ic}\pi (c\tau+d).
\end{equation*}
More generally a quasimodular form can be written as a polynomial in $E_2$ with modular coefficients.
Note that $E_2$ has a completion
\begin{equation*}
	\widehat E_2(\tau) := \widehat E_2(\tau,\overline\tau) = E_2(\tau) - \frac3{\pi v}.
\end{equation*}
That is, for $\begin{psmallmatrix}a&b\\c&d\end{psmallmatrix}\in\SL_2(\Z)$, we have
\begin{equation*}
	\widehat E_2\left(\frac{a\tau+b}{c\tau+d}\right) = (c\tau+d)^2 \widehat E_2(\tau), \quad \lim_{\overline\tau\to-i\infty} \widehat E_2(\tau,\overline\tau) = E_2(\tau).
\end{equation*}
\subsection{Quasi-shuffle products and $q$-multiple zeta values}
Let $a\in\N_0$, $k_1,\ldots,k_a\in\N$, and $P_1,\ldots,P_a\in x\mathbb{Q}[x]$ with $P_j$ of degree at most $k_j$ for all $1\leq j\leq a$. Following \cite{BK20}, we define
	\begin{equation*}
		g_{d_1,\ldots,d_a;P_1,\ldots,P_a}(q) := \sum_{1\leq n_1<n_2<\cdots<n_a} \prod_{j=1}^a \dfrac{P_j(q^{n_j})}{(1-q^{n_j})^{k_j}}.
	\end{equation*}
We write $\mathcal{Z}_q$ for the $\mathbb{Q}$-algebra generated by these functions.\footnote{Often, authors allow $P_1,\ldots,P_a \in \mathbb{Q}[x]$ and only request that $P_a \in x\mathbb{Q}[x]$. Conjecturally, the corresponding $\mathbb{Q}$-algebras are isomorphic \cite[Conjecture 3.15]{BI22}.} Note that $\mathcal{A}_{a,k,r}$ is obtained by setting $k_1=\cdots=k_a=k$ and $P_j(x)=x^r$. The functions $g_{d_1,\ldots,d_a;P_1,\ldots, P_a}$ are examples of $q$-analogues of multiple zeta values. That is, assuming $k_a\geq 2$, we have \cite[p.~6]{BK20}
	\[
	\lim_{q\to 1} (1-q)^{k_1+\cdots+k_a} g_{k_1,\ldots,k_a;P_1,\ldots,P_a}(q) = \zeta(k_1,\ldots,k_a)\prod_j P_j(1),
	\]
with the \emph{multiple zeta value}
\[
\zeta(k_1,\ldots,k_a):=\sum_{1\leq n_1<n_2<\cdots<n_a} \prod_{j=1}^a n_j^{-k_j}.
\]
Write $\mathcal{Z}$ for the $\IQ$-algebra of these (real) numbers.  If all the polynomials $P_j$ are monomials, $P_j(x)=x^{r_j}$ for some $r_j\in\N$, then we write (abusing notation)
\[
g_{k_1,\ldots,k_a;r_1,\ldots,r_a}(q) := \sum_{1\leq n_1<n_2<\cdots<n_a} \prod_{j=1}^a \dfrac{q^{r_jn_j}}{(1-q^{n_j})^{k_j}}.
\]
Particular cases include the Schlesinger--Zudilin model of $q$-analogues \cite{Sch01, Zud15} if $r_j=k_j$ and the Bradley--Zhao model \cite{Bra05} if $r_j=k_j-1$.
\begin{rem}
These functions are intimately related to the theory of partitions {\upshape(}see \cite[Proposition~3.9 and Example~3.10]{BI22}{\upshape)}. Let $k_1,\ldots,k_a\in \N$ and $r_1,\ldots,r_a\in \N$ as above, let $\lambda \in \mathscr{P}$, and define
\[
H_{k_1,\ldots,k_a;r_1,\ldots,r_a}(\lambda) := \sum_{1\leq n_1<n_2<\cdots<n_a} \prod_{j=1}^a \binom{m_{n_j}(\lambda)+k_j-r_j}{m_{n_j}(\lambda)-r_j+1}.
\]
Then, under the $q$-bracket ${\IQ}^\mathscr{P} \to \IQ\llbracket q\rrbracket$ given by
\[ \langle f \rangle_q := \frac{\sum_{\lambda \in \mathscr{P}} f(\lambda) q^{|\lambda|}}{\sum_{\lambda \in \mathscr{P}}  q^{|\lambda|}},\]
one has
\[ \langle H_{k_1,\ldots,k_a;r_1,\ldots,r_a} \rangle_q = g_{k_1,\ldots,k_a;r_1,\ldots,r_a}(q).\]
\end{rem}

Note that (see equation~(1.19) of \cite{Bac21})
\[ E_k = 1-\frac{2k}{B_k} g_{k;\mathcal E_k}\]
for some family of polynomials $\mathcal E_k$, called the Eulerian polynomials. Generalizing the weight assigned to quasimodular forms, the space $\mathcal{Z}_q$ admits a weight filtration, and the \emph{weight} of $g_{k_1,\ldots,k_a;r_1,\ldots,r_a}$ is given as $k_1+\cdots+k_a$. Of particular interest is the algebra structure of $\mathcal{Z}_q$, that is, $\mathcal{Z}_q$ is an example of a quasi-shuffle algebra, defined as follows (see \cite{Hof00}).
Suppose that we have a set $A$, called the \emph{set of letters}, and an associative and commutative product \emph{diamond} $\diamond$ on $A$. Extending this product bilinearly to $\IQ A$ gives a commutative non-unital $\IQ$-algebra $(\IQ A, \diamond)$. We are interested in $\IQ$-linear combinations of multiple letters of $A$, i.e., in elements of $\QA$. Here and in the following we call monic monomials in $\QA$ \emph{words}. 

\begin{defn*}
Define the \emph{quasi-shuffle product}  $\qsh$ on $A$ as the $\IQ$-bilinear product which satisfies $1 \qsh w = w \qsh 1 = w$ for any word $w\in \QA$ and
    \begin{equation*}
        a w \qsh b t = a (w \qsh b t) + b (a w \qsh t) + (a \diamond b) (w \qsh  t)
    \end{equation*}
    for any letters $a,b \in A$ and words $w,t \in \QA$.
\end{defn*}

\begin{example*} Let $A=\IN$ and $a\diamond b=a+b$ for $a,b\in A$. The subalgebra of words not starting with the letter~$1$ surjects onto $\mathcal{Z}$. In particular, the usual product of real numbers on $\mathcal{Z}$ can be interpreted as a quasi-shuffle product. For example, we have
\[ \zeta(k_1)\zeta(k_2) = \zeta(k_1,k_2)+\zeta(k_2,k_1)+\zeta(k_1+k_2) .\]
\end{example*}
\begin{example*}
Let $A=\IN^2$ and $(d,r)\diamond (e,s) = (d+e,r+s)$. Then, the quasi-shuffle algebra on $A$ surjects onto the algebra $\mathcal{Z}_q$, e.g.,
\[ g_{k;r} g_{\ell;s} = g_{k,\ell;r,s} + g_{\ell,k;s,r} + g_{k+\ell;r+s}.\]
\end{example*}

As the main application of the theory of quasi-shuffle algebras, we recall the following proposition, which is equation~(32) in \cite{HI17}. For a word $a\in A$, let $a^{\circ n}, a^{\diamond n}$, and $a^{\qsh n}$ be the $n$-fold concatenation, diamond, and quasi-shuffle product, respectively, of $a$ with itself. Let
\[\exp_{\qsh}(aX) := \sum_{n\geq 0} a^{\qsh n} \frac{ X^n}{n!} .\]
\begin{prop}\label{prop:qsh} For $a\in A$ one has
\[ \exp_{\qsh}\left(\sum_{n\geq 1} \frac{(-1)^{n+1}}{n} X^n a^{\diamond n} \right) = \sum_{j\geq 0} a^{\circ j} X^j.\]
\end{prop}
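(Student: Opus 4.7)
The plan is to expand both sides as power series in $X$ and compare coefficients of $X^j$, ultimately reducing the identity to the elementary power series identity $e^{\log(1+X)} = 1+X$. From the defining series $\exp_{\qsh}(h) = \sum_{n\ge 0}\tfrac{1}{n!}h^{\qsh n}$, the left-hand side expands as
\begin{equation*}
\sum_{n\ge 0}\frac{1}{n!}\sum_{k_1,\ldots,k_n\ge 1} X^{k_1+\cdots+k_n}\prod_{i=1}^n \frac{(-1)^{k_i+1}}{k_i}\cdot \bigl(a^{\diamond k_1}\bigr)\qsh \cdots \qsh \bigl(a^{\diamond k_n}\bigr).
\end{equation*}

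The key input is a combinatorial expansion of an iterated quasi-shuffle of single letters. By induction on $n$, directly from the recursion $aw\qsh bt = a(w\qsh bt)+b(aw\qsh t)+(a\diamond b)(w\qsh t)$, one obtains for $b_1,\ldots,b_n\in A$
\begin{equation*}
b_1\qsh b_2\qsh \cdots\qsh b_n \;=\; \sum_{(S_1,\ldots,S_m)}\bigl(\diamond_{i\in S_1}b_i\bigr)\bigl(\diamond_{i\in S_2}b_i\bigr)\cdots\bigl(\diamond_{i\in S_m}b_i\bigr),
\end{equation*}
where the sum runs over all ordered set partitions of $\{1,\ldots,n\}$, the $\diamond$-product over each block is well-defined by commutativity of $\diamond$, and juxtaposition denotes concatenation in $\QA$. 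Applied with $b_i = a^{\diamond k_i}$, each block $S_j$ contracts to $a^{\diamond \ell_j}$ where $\ell_j := \sum_{i\in S_j} k_i$, and $(\ell_1,\ldots,\ell_m)$ is a composition of $k_1+\cdots+k_n$.

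I would then group the coefficient of $X^j$ in the LHS by the resulting word $a^{\diamond \ell_1}\cdots a^{\diamond \ell_m}$. For fixed $(\ell_1,\ldots,\ell_m)\models j$ and fixed block sizes $(r_1,\ldots,r_m)$ with $r_i\ge 1$ and $\sum r_i = n$, the number of ordered set partitions with those block sizes is $\tfrac{n!}{r_1!\cdots r_m!}$, exactly cancelling the $\tfrac{1}{n!}$. The inner sums over compositions of each $\ell_i$ decouple block by block, producing
\begin{equation*}
[X^j](\text{LHS}) \;=\; \sum_{m\ge 1}\sum_{(\ell_1,\ldots,\ell_m)\models j} a^{\diamond \ell_1}\cdots a^{\diamond \ell_m}\,\prod_{i=1}^m C_{\ell_i},\qquad C_\ell := \sum_{r\ge 1}\frac{1}{r!}\sum_{(k_1,\ldots,k_r)\models\ell}\prod_{j=1}^r \frac{(-1)^{k_j+1}}{k_j}.
\end{equation*}
The generating identity $\exp(\log(1+X)) = 1+X$ now yields $\sum_{\ell\ge 1}C_\ell X^\ell = X$, so $C_\ell = \delta_{\ell,1}$. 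This forces every $\ell_i = 1$ and $m = j$, leaving exactly the term $a^{\circ j}$ with coefficient $1$, matching the RHS.

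The principal obstacle is the ordered-set-partition expansion of $b_1\qsh\cdots\qsh b_n$ in the second step; once this is in hand, the remaining argument is bookkeeping whose crux is that the multinomial $\binom{n}{r_1,\ldots,r_m}$ cancels $\tfrac{1}{n!}$, decoupling the blocks into independent exponential-series sums that collapse via $e^{\log(1+X)}=1+X$.
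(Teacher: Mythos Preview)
Your argument is correct. The ordered-set-partition expansion of a quasi-shuffle of single letters is a standard fact, and your induction sketch is valid: quasi-shuffling a single letter $b_1$ into a length-$m$ word $c_1\cdots c_m$ either inserts $\{1\}$ as a new block in one of $m+1$ slots or merges it into one of the $m$ existing blocks, which is exactly the recursion for ordered set partitions. After that, the multinomial cancellation and the reduction to $\exp(\log(1+X))=1+X$ go through as you describe.

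As for comparison: the paper does not actually prove this proposition. It is quoted verbatim as equation~(32) of Hoffman--Ihara \cite{HI17}, so there is nothing in the paper to compare against. In \cite{HI17} the identity is not obtained by your direct coefficient-comparison either; it falls out of a more structural framework in which one associates to any $f\in T\IQ\llbracket T\rrbracket$ with $f'(0)\ne 0$ an algebra homomorphism $\Psi_f$ of $(\QA,\qsh)$, shows $\Psi_{e^T-1}$ sends concatenation powers to divided quasi-shuffle powers, and then inverts via $\Psi_{\log(1+T)}$. That machinery yields a whole family of such identities at once; your approach is more elementary and self-contained but tailored to this single identity.
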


\subsection{Multivariable Appell functions}\label{sec:Appell}
Define Zwegers' \emph{multivariable Appell function \cite{Zw}} for $\ell\in\N$
	\begin{equation}\label{eq:Appell}
		A_{\ell}\left(z_{1},z_{2};\tau\right):=\zeta_{1}^{\frac{\ell}{2}} \sum_{n\in\Z} \frac{(-1)^{\ell n} q^{\frac{\ell n(n+1)}{2}}\zeta_{2}^{n}}{1-\zeta_{1}q^{n}},
	\end{equation}
	where $\zeta_j:=e^{2\pi i z_j}$ for $j\in \{1,2\}$. To describe its modularity, we require the {\it Jacobi theta function}
	\begin{equation*} 
		\vartheta\lrb{z;\t}:=\sum_{n\in\Z+\frac{1}{2}} e^{2\pi i n\lrb{z+\frac{1}{2}}} q^{\frac{n^2}{2}},
	\end{equation*}
	and ($\tau=u+iv$, $z=x+iy$)
	\begin{equation*}
		R\lrb{z;\t} = R(z,\overline{z};\tau,\overline\tau):= \sum_{n\in\Z+\frac{1}{2}} \lrb{\sgn(n)-E\lrb{\lrb{n+\frac yv}\sqrt{2v}}}(-1)^{n-\frac{1}{2}} q^{-\frac{n^2}{2}}e^{-2\pi i n z},
	\end{equation*}
	where $E$ is given by \eqref{eq:E}.
	Recall the elliptic transformation ($a$, $b\in\Z$)
	\begin{equation}\label{E:Elliptic}
		\vartheta(z+a\tau+b;\tau) = (-1)^{a+b}q^{-\frac{a^2}{2}}\zeta^{-a} \vartheta(z;\tau), \quad \vartheta(-z;\tau)=-\vartheta(z;\tau)
	\end{equation}
	as well as the following transformations (see Proposition~1.9 in \cite{ZwTh})
\begin{equation}\label{eq:Rell}
R(z+1;\tau)=-R(z;\tau), \quad  R(z+\tau;\tau) = -\zeta q^{\frac12} R(z;\tau) + 2\zeta^{\frac12}q^{\frac34}, \quad R(-z;\tau)=R(z;\tau).
\end{equation}

	We have, by equation~(4.2) of~\cite{Zw}
		\begin{equation}\label{eq:multipletosingleappell}
			A_{\ell}(z_1,z_2;\tau) = \frac{1}{\ell}\zeta_1^{\frac{\ell-1}{2}} \sum_{j\pmod{\ell}} A_1\!\left(z_1,\frac{z_2+j}{\ell}+\frac{(\ell-1)\tau}{2\ell};\frac{\tau}{\ell}\right).
		\end{equation}
	We then define the completion\footnote{Note that here, the word ``completion'' is used in a wider sense than defined above. In the cases of interest in this paper, we have a completion in the above sense.} of $A_\ell(z_1,z_2;\tau)$
	\begin{align*}
		\widehat{A}_{\ell}(z_{1},z_{2};\tau) &= \widehat{A}_{\ell}(z_{1},z_{2},\overline{z_1},\overline{z_2};\tau,\overline\tau)\\
		&:=A_{\ell}(z_{1},z_{2};\tau) + \frac{i}{2} \sum_{j=0}^{\ell-1} e^{2\pi ijz_{1}} \vartheta\left(z_{2}+j\tau+\frac{\ell-1}{2};\ell\tau\right) R\left(\ell z_{1}-z_{2}-j\tau-\frac{\ell-1}{2};\ell\tau\right).
	\end{align*}
	We now recapitulate Theorem~4 of~\cite{Zw}, which gives the Jacobi transformation properties of $\widehat{A}_\ell$.
	\begin{lem}\label{L:Ahat}\hspace{0cm}
		\begin{enumerate}[wide,labelwidth=!,labelindent=0pt,label=\rm(\arabic*)]
			\item\label{I:Ahatsl2z} For $\pmat{a&b\\c&d}\in\SL_{2}(\Z)$, we have the transformation law
			\begin{equation*}
				\widehat{A}_{\ell}\left(\frac{z_{1}}{c\tau+d},\frac{z_{2}}{c\tau+d};\frac{a\tau+b}{c\tau+d}\right) = (c\tau+d)e^{\frac{\pi ic }{c\tau+d}\left(-\ell z_{1}^{2}+2z_{1}z_{2}\right)} \widehat{A}_{\ell}(z_{1},z_{2};\tau).
			\end{equation*}
			\item\label{I:Ahatell} For $n_{1},n_{2},m_{1},m_{2} \in\Z$, we have
			\begin{equation*}
				\widehat{A}_{\ell}(z_{1}+n_{1}\tau+m_{1},z_{2}+n_{2}\tau+m_{2};\tau) = (-1)^{\ell(n_{1}+m_{1})} \zeta_{1}^{\ell n_{1}-n_{2}} \zeta_{2}^{-n_{1}} q^{\frac{\ell n_{1}^{2}}{2} - n_{1}n_{2}} \widehat{A}_{\ell} (z_{1},z_{2};\tau).
			\end{equation*}
		\end{enumerate}
	\end{lem}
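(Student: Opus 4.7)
Since this lemma is a recapitulation of Theorem~4 of \cite{Zw}, the cleanest plan is to reduce to the level-$1$ case via \eqref{eq:multipletosingleappell} and invoke the known modular and elliptic properties of the completed Lerch sum $\widehat A_1$ (essentially Zwegers' $\widehat\mu$-function, for which the completion $\frac{i}{2}\vartheta\cdot R$ is the standard error correction). Writing $A_\ell$ as a sum of $A_1$ values at shifted arguments with modular parameter $\tau/\ell$, one would verify that the non-holomorphic correction $\frac{i}{2}\sum_{j=0}^{\ell-1} e^{2\pi ijz_1}\vartheta(z_2+j\tau+\frac{\ell-1}{2};\ell\tau) R(\ell z_1-z_2-j\tau-\frac{\ell-1}{2};\ell\tau)$ assembles, summand by summand, into the sum of individual $\widehat A_1$ corrections over the index $j$ in \eqref{eq:multipletosingleappell}.

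For the elliptic transformations \ref{I:Ahatell}, the most direct route is to apply the shifts $z_j\mapsto z_j+n_j\tau+m_j$ directly to the defining series \eqref{eq:Appell}. The shift $z_1\mapsto z_1+n_1\tau$ is absorbed by a reindexing $n\mapsto n-n_1$ of the summation variable, producing the multiplier $(-1)^{\ell n_1}\zeta_1^{\ell n_1-n_2}\zeta_2^{-n_1} q^{\ell n_1^2/2 - n_1 n_2}$ claimed in the lemma; the $z_2$-shift is a similar bookkeeping. The matching shifts of the correction term follow from the theta elliptic law \eqref{E:Elliptic} and the $R$-elliptic laws \eqref{eq:Rell}, after reindexing $j\mapsto j\pm 1\pmod{\ell}$ in the sum over $j$ whenever one shifts by $\tau$. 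The integer shifts $m_1,m_2$ contribute only the sign factor $(-1)^{\ell m_1}$, coming from the half-integral theta characteristic.

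The modular transformation \ref{I:Ahatsl2z} can be reduced to the two generators $T\colon\tau\mapsto\tau+1$ and $S\colon\tau\mapsto -1/\tau$ of $\SL_2(\Z)$. The $T$-case is immediate from the $q$-series definition, since all $q$-exponents appearing in $A_\ell$ are integers and the $\vartheta$ and $R$ factors have explicit behaviour under $\tau\mapsto\tau+1$. The $S$-case is the main technical obstacle, and is handled in Zwegers' thesis by writing $\widehat A_1$ in terms of a Mordell-type integral and applying Poisson summation; combined with \eqref{eq:multipletosingleappell}, this yields the weight-$1$ modular transformation for $\widehat A_\ell$ together with the stated cocycle $e^{\pi i c(-\ell z_1^2+2z_1 z_2)/(c\tau+d)}$. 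The chief bookkeeping challenge is to track the $\vartheta$ and $R$ characteristics at modular parameter $\ell\tau$ through the substitution $\tau\to\tau/\ell$ so that the exponential cocycles at level $\ell$ combine correctly; once this is done, the elliptic transformation in \ref{I:Ahatell} can also be recovered from the modular one by conjugating the translation subgroup by $\SL_2(\Z)$.
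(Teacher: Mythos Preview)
The paper does not prove this lemma at all; it is stated as a direct recapitulation of Theorem~4 of \cite{Zw} and simply quoted without argument. Your proposal goes further than the paper does: you sketch an actual proof strategy (reduction to level~$1$ via \eqref{eq:multipletosingleappell}, direct verification of the elliptic shifts from \eqref{eq:Appell}, \eqref{E:Elliptic}, and \eqref{eq:Rell}, and reduction of the modular law to the generators $T$ and $S$). This is essentially the route Zwegers follows in \cite{Zw}, so your outline is sound, though strictly more than what is required here since the paper treats the lemma as a black-box citation.
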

 Of particular interest is Zwegers' \emph{$\mu$-function}
	\begin{equation}\label{eq:mu}
		\mu(z_1,z_2;\tau) := \frac{A_1(z_1,z_2;\tau)}{\vartheta(z_2;\tau)},
	\end{equation}
	which he studied in his Ph.D. thesis. Using Proposition 1.4 (3), (4) of \cite{ZwTh}, we can show inductively:
	\begin{lem}\label{L:MuShift}
		For $n\in \Z$, we have
		\begin{multline*}
			\mu(z_1,z_2-n\tau;\tau)
			= (-1)^n q^\frac{n^2}2 e^{2\pi in(z_1-z_2)} \mu(z_1,z_2;\tau) \\ + ie^{-\pi i\sgn(n)(z_1-z_2)} q^{-\frac18-\frac{|n|}2} \sum_{j=1}^{|n|} (-1)^j e^{2\pi ij\sgn(n)(z_1-z_2)} q^{-\frac{j(j-1)}2+j|n|}.
		\end{multline*}
	\end{lem}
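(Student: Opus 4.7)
The plan is to establish this identity by induction on $|n|$, with the base cases $n=\pm 1$ furnished directly by Proposition~1.4(3), (4) of \cite{ZwTh}. For $n=0$ the sum on the right is empty and the formula is a tautology. For $n=1$, the cited single-step shift (combined if necessary with the diagonal invariance $\mu(z_1+\tau,z_2+\tau;\tau)=\mu(z_1,z_2;\tau)$, which converts a shift of $z_1$ into the desired shift of $z_2$) gives
\begin{equation*}
\mu(z_1,z_2-\tau;\tau) = -q^{\frac12}e^{2\pi i(z_1-z_2)}\mu(z_1,z_2;\tau) - ie^{\pi i(z_1-z_2)}q^{\frac38},
\end{equation*}
which is exactly the $n=1$ specialization of the claim. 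The case $n=-1$ is entirely analogous via the companion shift formula.

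For the inductive step with $n\geq 1$, I would write $z_2-(n+1)\tau=(z_2-n\tau)-\tau$ and apply the $n=1$ base case with $z_2$ replaced by $z_2-n\tau$. This picks up a factor $q^n$ in the exponential coming from the shift $z_1-z_2\to z_1-z_2+n\tau$, so
\begin{equation*}
\mu(z_1,z_2-(n+1)\tau;\tau) = -q^{\frac12+n}e^{2\pi i(z_1-z_2)}\,\mu(z_1,z_2-n\tau;\tau) - ie^{\pi i(z_1-z_2)}q^{\frac38+\frac n2}.
\end{equation*}
Substituting the inductive hypothesis for $\mu(z_1,z_2-n\tau;\tau)$, the coefficient of $\mu(z_1,z_2;\tau)$ becomes
\begin{equation*}
(-q^{\frac12+n}e^{2\pi i(z_1-z_2)})\cdot(-1)^n q^{\frac{n^2}2}e^{2\pi in(z_1-z_2)} = (-1)^{n+1}q^{\frac{(n+1)^2}2}e^{2\pi i(n+1)(z_1-z_2)},
\end{equation*}
which is the main term at level $n+1$.

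The remaining work is to check that the inhomogeneous terms reassemble into the desired sum. After factoring out $ie^{-\pi i(z_1-z_2)}q^{-\frac18-\frac{n+1}2}$ from the combined inhomogeneous contribution, the shift $j\to j-1$ in the inherited sum combined with the standalone term $-ie^{\pi i(z_1-z_2)}q^{\frac38+\frac n2}$ (which accounts for $j=1$) produces a single sum over $1\leq j\leq n+1$, provided the arithmetic identity
\begin{equation*}
-\frac{(j-1)(j-2)}2 + jn + 1 = -\frac{j(j-1)}2 + j(n+1)
\end{equation*}
holds; this is elementary. The case $n\leq -1$ runs in parallel, iterating from the $n=-1$ base case and using that the formula is manifestly symmetric in the substitutions $n\mapsto -n$, $z_2-z_1\mapsto z_1-z_2$ encoded by the $\sgn(n)$ factors.

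The main obstacle is purely algebraic bookkeeping — tracking the exponents of $q$ and of $e^{\pi i(z_1-z_2)}$ across the substitution and verifying the quadratic exponent identity above. No further modular or theta-function input is required beyond the base step provided by Proposition~1.4 of \cite{ZwTh}.
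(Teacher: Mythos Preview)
Your proposal is correct and follows exactly the approach the paper indicates: the paper's own argument is simply ``Using Proposition 1.4 (3), (4) of \cite{ZwTh}, we can show inductively,'' and you carry out precisely that induction, with the single-step shift from Zwegers as the base case and an algebraic verification of the quadratic exponent match for the inductive step. Your bookkeeping (the $q^n$ picked up from $z_2\to z_2-n\tau$, the reindexing $j\mapsto j+1$, and the exponent identity) is accurate.
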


\subsection{Generalized Stirling numbers and derivatives}	\label{sec:stirling}
Consider the generalized {\it Stirling numbers} $S_{n,\ell}(a,b,c)$ (see\footnote{There $S_{n,\ell}(a,b,c)$ was denoted by $S(n,\ell,b,a,c)$.} (1) and Theorem~1 of \cite{HS98}) uniquely defined via the recurrence
	\begin{equation}\label{eq:Stir}
		S_{n+1,\ell}(a,b,c) = \lrb{a\ell-bn+c}S_{n,\ell}(a,b,c) +S_{n,\ell-1}(a,b,c),
	\end{equation}
with initial conditions $S_{0,0}\lrb{a,b,c}=1,$ and $S_{n,\ell}\lrb{a,b,c}= 0$ if $\ell<0$ or $\ell>n$. The usual (signed) Stirling number $s(n,\ell)$ is the special case $s(n,\ell)=S_{n,\ell}(0,1,0)$ as we have $s(n+1,\ell)=-n s(n,\ell)+s(n,\ell-1)$ with $s(0,0)=1$ and $s(n,\ell)=0$ if $\ell<0$ or $\ell>n$. We let
	\begin{equation}\label{eq:defalphabeta}
		\alpha_c(n,\ell) := S_{n,\ell} \left(0,1,c\right), \quad
		\beta_c(n,\ell):= S_{n,\ell}(1,0,c),
\end{equation}
which by \eqref{eq:Stir} satisfy the recurrences
	\begin{equation*}
		\alpha_c(n+1,\ell)=(c-n)\alpha_c(n,\ell)+\alpha_c(n,\ell-1), \quad
		\beta_c(n+1,\ell)=(\ell+c)\beta_c(n,\ell)+\beta_c(n,\ell-1).
	\end{equation*}
Then we have the following lemma, which is easily proven recursively.

\begin{lem}\label{lem:zetazderivative}
For a $\mathcal{C}^\infty$-function $g$, $n\in\N_0$, and $c\in \R$, we have
	\begin{align*}
		\left(\zeta^{-1}\frac\partial{\partial z}\right)^{\! n} \zeta^{c} g(z) &= \z^{c- n}\sum_{\ell=0}^n (2\pi i)^{n-\ell} \alpha_{c}(n,\ell)  \frac{\partial^\ell}{\partial z^\ell} g(z), \\
				\frac{\partial^n}{\partial z^n}\z^{c} g(z)&=\sum_{\ell=0}^{n} \lrb{2\pi i}^{n-\ell} \beta_c(n,\ell) \z^{c+\ell}\lrb{\zeta^{-1}\frac{\partial}{\partial z}}^\ell g(z).
	\end{align*}
\end{lem}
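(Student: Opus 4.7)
The plan is to prove both identities by induction on $n$, with the key engine being the product rule for $\frac{\partial}{\partial z}$ combined with the recurrences for $\alpha_c$ and $\beta_c$ from \eqref{eq:defalphabeta}. In both cases the base case $n=0$ is immediate: the right-hand side collapses to $\alpha_c(0,0)\zeta^c g(z)=\zeta^c g(z)$ (resp.\ $\beta_c(0,0)\zeta^c g(z)$), matching the left-hand side. The only non-trivial calculation is the inductive step.

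For the first identity, I would apply $\zeta^{-1}\frac{\partial}{\partial z}$ to the inductive hypothesis. Using $\frac{\partial}{\partial z}\zeta^{c-n}=2\pi i(c-n)\zeta^{c-n}$, we get
\begin{align*}
\zeta^{-1}\frac{\partial}{\partial z}\left[\zeta^{c-n}\frac{\partial^\ell g}{\partial z^\ell}\right] = 2\pi i(c-n)\zeta^{c-n-1}\frac{\partial^\ell g}{\partial z^\ell}+\zeta^{c-n-1}\frac{\partial^{\ell+1}g}{\partial z^{\ell+1}}.
\end{align*}
After summing over $\ell$, reindexing $\ell\mapsto \ell-1$ in the derivative-raising piece, and collecting terms with a common $\frac{\partial^\ell g}{\partial z^\ell}$, the bracket becomes $(c-n)\alpha_c(n,\ell)+\alpha_c(n,\ell-1)$, which by the recurrence for $\alpha_c$ equals $\alpha_c(n+1,\ell)$. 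The power of $\zeta$ is now $c-(n+1)$, completing the step.

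For the second identity, I would differentiate the inductive hypothesis by $\frac{\partial}{\partial z}$. The crucial observation is that
\begin{align*}
\zeta^{c+\ell}\frac{\partial}{\partial z}\left(\zeta^{-1}\frac{\partial}{\partial z}\right)^{\!\ell}g = \zeta^{c+\ell+1}\left(\zeta^{-1}\frac{\partial}{\partial z}\right)^{\!\ell+1}g,
\end{align*}
so that the Leibniz expansion yields
\begin{align*}
\frac{\partial}{\partial z}\left[\zeta^{c+\ell}\left(\zeta^{-1}\frac{\partial}{\partial z}\right)^{\!\ell}g\right] = 2\pi i(c+\ell)\zeta^{c+\ell}\left(\zeta^{-1}\frac{\partial}{\partial z}\right)^{\!\ell}g + \zeta^{c+\ell+1}\left(\zeta^{-1}\frac{\partial}{\partial z}\right)^{\!\ell+1}g.
\end{align*}
Summing and reindexing the second piece by $\ell\mapsto\ell-1$, the coefficient of $(2\pi i)^{n+1-\ell}\zeta^{c+\ell}(\zeta^{-1}\partial_z)^\ell g$ becomes $(c+\ell)\beta_c(n,\ell)+\beta_c(n,\ell-1)=\beta_c(n+1,\ell)$ by the defining recurrence for $\beta_c$, closing the induction.

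The whole argument is routine; the only place to be careful is the bookkeeping for the index shift (the $\ell=0$ and $\ell=n+1$ endpoints are handled automatically by the vanishing convention $\alpha_c(n,-1)=\beta_c(n,-1)=\alpha_c(n,n+1)=\beta_c(n,n+1)=0$) and the identification of the two types of derivative in the second identity via the identity $\zeta\cdot\zeta^{-1}\partial_z = \partial_z$. No deeper idea is required; the two recurrences in \eqref{eq:Stir}–\eqref{eq:defalphabeta} were manifestly engineered so that exactly these two identities drop out of the product rule.
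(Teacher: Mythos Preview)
Your proposal is correct and matches the paper's approach: the paper simply states that the lemma ``is easily proven recursively,'' and your induction on $n$ via the product rule and the recurrences for $\alpha_c$ and $\beta_c$ is exactly the recursion they have in mind. Your bookkeeping (the Leibniz expansion, the index shift, and in particular the observation $\zeta^{c+\ell}\partial_z = \zeta^{c+\ell+1}(\zeta^{-1}\partial_z)$ in the second identity) is all accurate.
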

\section{Proofs of Theorems \ref{T: Main Theorem}, \ref{thm:main2}, and \ref{T:Quasimodular}} \label{Proofs}
	\subsection{Proof of Theorems \ref{T: Main Theorem} and \ref{thm:main2}}
 We first prove the results on the limiting behaviour of the sequences $\mathcal{A}_{a,k,r}$ and $\mathcal{B}_{a,k,r,s}$.
	\begin{proof}[Proof of \Cref{T: Main Theorem}]
		\begin{enumerate}[wide, labelwidth=!, labelindent=0pt]
		\item Let $\nu,\ell \in\N$, and let $\mathscr{P}(\nu,\ell)$ denote the set of partitions of $\nu$ into $\ell$ parts, and let $\mathscr{P}(\nu)$ be the set of all partitions of $\nu$. Consider
		\begin{align*}
			\mathcal V_{a,k,r}(q) :=& \ q^{-\frac{ra(a+1)}{2}} \mathcal{A}_{a,k,r}(q) \prod_{n\ge 1} {(1-q^{rn})(1-q^n)^{k}} \\
			=& \ q^{-\frac{ra(a+1)}{2}} \sum_{1\leq n_1 < n_2 < \dots < n_a} \dfrac{q^{r(n_1 + \dots + n_a)}}{\lp 1 - q^{n_1} \rp^k \cdots \lp 1 - q^{n_a} \rp^k} \prod_{n\ge 1} {(1-q^{rn})(1-q^n)^{k}} \\
		=& \prod_{n\ge 1} {(1-q^{rn})}\sum_{\nu\geq0} \sum_{\ell=0}^a\sum_{\lambda\in \mathscr{P}(\nu,\ell)} q^{r|\lambda|}\prod_{j=1}^a\dfrac{1}{(1-q^{\lambda_{a-j+1}+j})^k} \prod_{n\ge 1} {(1-q^n)^{k}} ,
		\end{align*}
		where in the last equality we change $n_j\mapsto \lambda_{a-j+1}+j$, turning $(n_1,n_2,\dots)$ into a partition $\lambda = \lp \lambda_1, \dots, \lambda_a \rp$. For convenience we allow some of the later entries to be zero.

		Now, given $\lambda$, let $m$ be the smallest integer that cannot be written of the form $\lambda_{a-j+1}+j$ for some $1 \leq j \leq a$. 
		By definition of $m$, there is an index~$n$ such that $\lambda_{a-n+1}+n=m-1$. Moreover, in that case, $\lambda_{a-n+1}=0$, because otherwise $m$ would not be the smallest such index. Hence, we have $\ell=m-1$. 
		Now, one inductively shows that $\lambda_{a-j+1}+j\geq m+j-\ell$ for all $j>\ell$, that is, $\lambda_{a-j+1}\geq m-\ell=1$. Therefore, we have
		\[
			|\lambda|+m\geq (a-\ell)+m=a+1.
		\]
		Hence, the values $\lambda_{a-j+1}+j$ take on each value from $1$ to $a-|\lambda|r$ as $j$ varies, and we have
		\[
			q^{|\lambda|r}\prod_{j=1}^k\dfrac{1}{(1-q^{\lambda_j+a-j+1})^k} \prod_{n\ge 1} {(1-q^n)^{k}}\equiv q^{|\lambda|r} \pmod {q^{a+1}}.
		\]
		This implies that
		\begin{align*}
			\mathcal V_{a,k,r}(q)\equiv & \prod_{n\ge 1} {(1-q^{rn})}\sum_{\nu\geq0} \sum_{\ell=0}^a\sum_{\lambda\in \mathscr{P}(\nu,\ell)} \! q^{|\lambda|r}
			\equiv \prod_{n\ge 1} {(1-q^{rn})}\sum_{n\geq0} \sum_{\lambda\in \mathscr{P}(n)} \! q^{r|\lambda|} \equiv 1 \pmod{ q^{a+1}},
		\end{align*}
		as desired. We note that since the range of summation and denominator of the summands for $\mathcal{B}_{a,k,r,s}$ are the same, along with the fact that the exponents on $q$ are now quadratic rather than linear, the same argument with slight modifications goes through for $\mathcal{B}_{a,k,r,s}$.
		\item From \Cref{T: NO}, we have
		\begin{align*}
			\prod_{n\ge 1} \dfrac{1}{(1-q^{rn})(1-q^n)^{k}} &= \sum_{m\ge 0} p(m) q^{rm} \sum_{\ell\ge 0} q^{\ell} \sum_{\lambda\vdash \ell}\prod_{h\in \mathcal{H}(\lambda)} \lrb{1-\dfrac{k-1}{h^2}} \\
				&= \sum_{n\ge 0} \sum_{m=0}^{\left\lfloor\frac{n}{r}\right\rfloor}p(m) \sum_{\lambda\vdash(n-mr)}\prod_{h\in \mathcal{H}(\lambda)} \lrb{1-\dfrac{k-1}{h^2}} q^n.
		\end{align*}
		Plugging this into (1) gives the result.
		\item For $r=1$, we have by \Cref{T: NO} that
		\begin{align*}
			\prod_{n\ge 1} \dfrac{1}{(1-q^{rn})(1-q^n)^{k}} &= \prod_{n\ge 1} \dfrac{1}{(1-q^n)^{k+1}} = \sum_{\ell\ge 0} \sum_{\lambda\vdash \ell}\prod_{h\in \mathcal{H}(\lambda)} \lrb{1-\dfrac{k}{h^2}} q^{\ell}.
		\end{align*}
		Again, plugging this into (1) gives the result. The argument for $\mathcal{B}_{a,k,r,s}$ is similar.  \qedhere
	\end{enumerate}
	\end{proof}
	Next, we prove \Cref{thm:main2}.
\begin{proof}[Proof of \Cref{thm:main2}]
We have
\begin{align*}
\mathcal{A}_{a,k,r}(q) &= \sum_{1\leq n_1 < n_2 < \dots < n_a} \dfrac{q^{r(n_1 + \dots + n_a)}}{\lp 1 - q^{n_1} \rp^k \cdots \lp 1 - q^{n_a} \rp^k} \\
&=  \sum_{1\leq n_1 < n_2 < \dots < n_a}\sum_{m_1,\ldots,m_a\geq 1} \prod_{j=1}^a \binom{ m_j + k  - 2}{ m_j -1} q^{(r-1)(n_1 + \dots + n_a)}q^{n_1m_1+\cdots+n_am_a} \\
&= \sum_{\ell,n\geq1} \sum_{\substack{1\leq n_1 < n_2 < \dots < n_a \\ n_1+\ldots+n_a=n}}\sum_{\substack{m_1,\ldots,m_a\geq1 \\ n_1m_1+\cdots+n_am_a=\ell}} \prod_{j=1}^a \binom{ m_j + k  - 2}{ m_j -1} q^{(r-1)(n_1 + \dots + n_a)}q^{m_1n_1+\cdots+m_an_a}.
\end{align*}
Hence, by noting that $\binom{ m_j + k - 2}{ m_j-1} $ is a polynomial in $k$ of degree $m_j-1$, one can take
\[
P_{a,\ell,n}(k) := \sum_{\substack{1\leq n_1 < n_2 < \dots < n_a \\ m_1,\ldots,m_a\geq1 \\ n_1+\ldots+n_a=n \\ m_1n_1+\cdots+m_an_a=n+\ell}} \prod_{j=1}^a \binom{m_j + k - 2}{m_j-1} .
\]
Note that $P_{a,\ell,n}$ may vanish if the system of equations in the $n_j$ and $m_j$ has no solutions (e.g., if $a=1$ and $n\nmid \ell$). Also, observe that $\sum_{j} (m_j-1)\leq \ell$, where equality is obtained if for all $m_j$ with $m_j\geq2$ one has $n_j=1$.

Similarly, one proves the result for the functions $\mathcal{B}_{a,k,r,s}$ if one fixes ${n_1^2+\ldots+n_a^2=m}$. The inequalities
$\frac{n^2}{a} \leq m\leq n^2$ with $n=n_1+\ldots+n_a$ are standard.
\end{proof}
We are now ready to prove \Cref{cor:main2}.
\begin{proof}[Proof of \Cref{cor:main2}]
One can take
\[
\mathcal{P}_{a,r,n}(k) := \sum_{\substack{\ell,m\geq 0 \\ mr+\ell=n}} P_{a,\ell,m}(k) = \sum_{\substack{1\leq n_1 < n_2 < \dots < n_a \\ m_1,\ldots,m_a\geq1 \\ m_1n_1+\cdots+m_an_a=n}} \prod_{j=1}^a \binom{m_j + k - r - 1}{m_j-r}.
\]
Now, the degree of a summand in $\mathcal{P}_{a,r,n}$ equals $\sum_{j}(m_j-r)$ if $m_j\geq r$ for all $j$ and the summand is zero otherwise.  First of all, note that since all the terms in the sum are positive there is no cancellation. Under the conditions $\sum m_jn_j =n$ and $m_j\ge r$, the sum $\sum m_j$ is maximized if the $n_j$ are as small as possible and $m_1$ is as large as possible which happens if $\lrsb{m_1,m_2,m_3,\cdots ,m_a}=\lrsb{r+n-r\frac{a(a+1)}{2},r,r,\cdots ,r}$ and $\lrsb{n_1,n_2,\cdots, n_a}=\lrsb{1,2,\cdots a}$. Hence, we get
\begin{align*}
	\mathcal{P}_{a,r,n}(k) = \frac{k^{n-\frac{ra(a+1)}{2}}}{\left(n-\frac{1}{2}a(a+1)r\right)!}  + O\left(k^{n-\frac{ra(a+1)}{2}-1}\right).
\end{align*}
The construction of the polynomial~$\mathcal{P}_{a,r,s,n}$ is analogous, and the arguments for its degree are similar as well.
\end{proof}

	\subsection{Proof of \Cref{T:Quasimodular}}
\Cref{T:Quasimodular} is a direct consequence of the following proposition.

	\begin{prop} \label{P:Quasimodularity}
	For $1\leq r\leq k$, $\mathcal{A}_{a,k,r}$ is a polynomial of degree at most $ak$ in the Eisenstein series $G_\ell$ for $\ell$ {\it even and odd}. Moreover, if $k=r$, then only Eisenstein series $G_\ell$ with $\ell$ even occur.
	\end{prop}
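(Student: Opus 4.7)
The plan is to reduce the depth-$a$ sum $\mathcal{A}_{a,k,r}$ to depth-$1$ building blocks via the quasi-shuffle formalism of \Cref{prop:qsh}, and then to expand each depth-$1$ function $g_{k';r'}$ as a $\IQ$-linear combination of the Eisenstein series $G_\ell$. Both parts of the proposition ought to follow from the structure of the single polynomial $P_{k,r}(m) := \binom{m-r+k-1}{k-1}$ that appears in the depth-$1$ expansion.

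\emph{Quasi-shuffle reduction.} I would apply \Cref{prop:qsh} to the single letter $(k,r)\in A = \IN^2$ (with $\diamond$ coordinate-wise addition). Under $\IQ A \to \mathcal{Z}_q$, the concatenated word $(k,r)^{\circ a}$ maps to $\mathcal{A}_{a,k,r}(q)$, each diamond power $(k,r)^{\diamond n} = (nk,nr)$ maps to $g_{nk;nr}(q)$, and the quasi-shuffle product becomes ordinary multiplication. This yields
\begin{align*}
	\sum_{a\geq 0} \mathcal{A}_{a,k,r}(q)\, X^a = \exp\!\lrb{\sum_{n\geq 1}\frac{(-1)^{n+1}}{n}\, g_{nk;nr}(q)\, X^n},
\end{align*}
so that extracting $[X^a]$ exhibits $\mathcal{A}_{a,k,r}$ as a polynomial of total weight $ak$ in the depth-$1$ functions $g_{nk;nr}$ with $1\leq n\leq a$.

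\emph{Depth-$1$ expansion.} Next, I would use the binomial identity
\begin{align*}
	\frac{q^{r'n}}{(1-q^n)^{k'}} = \sum_{m\geq r'} P_{k',r'}(m)\, q^{mn},
\end{align*}
so $g_{k';r'}(q) = \sum_{m\geq r'} P_{k',r'}(m)\, \frac{q^m}{1-q^m}$. The key observation is that $P_{k',r'}$ vanishes on $\{r'-k'+1,\ldots,r'-1\}$, and the hypothesis $1\leq r'\leq k'$ forces this set to contain $\{1,\ldots,r'-1\}$; thus the lower summation limit may be extended down to $m\geq 1$ without change. Writing $P_{k',r'}(m) = \sum_{j=0}^{k'-1} c_j\, m^j$ and interchanging sums yields
\begin{align*}
	g_{k';r'}(q) = \sum_{j=0}^{k'-1} c_j \sum_{N\geq 1} \sigma_j(N)\, q^N,
\end{align*}
a $\IQ$-linear combination of $G_1,\ldots,G_{k'}$. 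Combined with the quasi-shuffle step, $\mathcal{A}_{a,k,r}$ is a polynomial of degree at most $ak$ in the $G_\ell$'s, which proves the first assertion.

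\emph{Parity under $k=r$.} Since $\sigma_j(N)q^N$ feeds into $G_{j+1}$, only even $\ell$ can occur precisely when each $P_{nk,nr}(m)$ contains only odd powers of $m$, i.e., is an odd polynomial. Under the diamond map the ratio $k/r$ is preserved, so this reduces to the parity of $P_{k,r}$ itself. The natural symmetry argument -- requiring the root set $\{r-k+1,\ldots,r-1\}$ of $P_{k,r}$ to be invariant under $m \mapsto -m$, so that $P_{2r,r}(m) = \frac{m}{(2r-1)!}\prod_{j=1}^{r-1}(m^2-j^2)$ becomes manifestly odd -- singles out the balance $k=2r$ and then propagates through the exponential above, producing only even-weight Eisenstein series. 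I expect this parity step to be the main obstacle of the proof under the stated hypothesis $k=r$: the polynomial $P_{k,k}(m) = \binom{m-1}{k-1}$ has root set $\{1,\ldots,k-1\}$, which is not symmetric about the origin, so direct odd-ness fails at depth $1$. Consequently the "only even $G_\ell$" conclusion for $k=r$ would need either a finer cancellation among the higher-degree monomials produced by $[X^a]$ in the quasi-shuffle exponential, or a convention that absorbs odd-weight contributions (most notably $G_1$) into the rational/constant coefficients.
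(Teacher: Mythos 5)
Your proposal follows essentially the same route as the paper: the quasi-shuffle exponential identity from \Cref{prop:qsh} reduces to depth one, and the depth-one function is expanded via $\binom{m+k-r-1}{m-r}$ into Eisenstein series of weight at most $k$, with the parity claim resting on that binomial being an odd polynomial in $m$. Your diagnosis of the parity step is also vindicated: the paper's own proof of the second assertion assumes $k=2r$ (where $\binom{m+r-1}{m-r}=\frac{m}{(2r-1)!}\prod_{j=1}^{r-1}(m^2-j^2)$ is manifestly odd), so the hypothesis ``$k=r$'' in the statement is a typo for ``$k=2r$'', consistent with \Cref{T:Quasimodular}.
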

	\begin{proof}
		In this proof we use ideas similar to those in the proof of Theorem~1.1 in~\cite{Bac23}. Note that $\mathcal{A}_{a,k,r}$ has weight $ak$.
Recall that the quasi-shuffle algebra on $\IN^2$ with $(k,r)\diamond (\ell,s)=(k+\ell,r+s)$ is surjective onto the algebra~$\mathcal{Z}_q$. In particular, the multiplication of $q$-series corresponds to the quasi-shuffle product on~$\IN^2$. By \Cref{prop:qsh} applied to $(k,r)\in \IN^2$, we have, with $\mathcal{A}_{k,r}:=\mathcal{A}_{1,k,r}$,
\begin{equation*}
	\exp \left(\sum_{n\geq 1} \frac{(-1)^{n+1}}{n} \mathcal{A}_{nk,nr}(q) X^n \right) = \sum_{j\geq 0} \mathcal{A}_{j,k,r}(q) X^j.
\end{equation*}
In particular, $\mathcal{A}_{j,k,r}$ can be expressed as a polynomial in the $\mathcal{A}_{nk,nr}$ (with $a=1$). Now,
		\begin{equation*}
			\mathcal{A}_{k,r}(q) = \sum_{n\geq 1} \frac{q^{rn}}{(1-q^n)^{k}} = \sum_{m,n\geq1} \binom{m+k-r-1}{m-r} q^{mn}.
		\end{equation*}
	Since, $m\mapsto \binom{m+k-r-1}{m-r}$ is a polynomial of degree $k-1$, the function $\mathcal{A}_{k,r}$ is a linear combination of Eisenstein series of weight at most $k$.

		Next, assume $k=2r$. Then $\mathcal{A}_{a,2r,r}$ can be written as a polynomial in the $\mathcal{A}_{2r,r}$. Note that
		\begin{equation*}
			\mathcal{A}_{2r,r}(q) = \sum_{n\geq1} \frac{q^{nr}}{(1-q^n)^{2r}} = \sum_{m,n\geq1} \binom{m+r-1}{m-r} q^{mn},
		\end{equation*}
		where $m\mapsto\binom{m+r-1}{m-r}$ is an odd polynomial in $m$. Hence $\mathcal{A}_{2r,r}$ is a linear combination of even weight Eisenstein series, and the second part of the statement follows.
	\end{proof}

	By the above proof, one can explicitly find the expression of $\mathcal{A}_{a,k,r}$ in terms of Eisenstein series. Namely, note that
	\[
		\binom{m+k-r-1}{m-r} = \sum _{j=0}^{k-1} C_{k,r,j} m^j \quad \text{with} \quad C_{k,r,j}:=\sum _{\ell=0}^{k-1-j} \frac{\binom{\, j+\ell\,}{j} (k-r-1)^\ell s(k-1,j+\ell)}{(k-1)!},
	\]
where $s(n,\ell)$ denotes the Stirling number of the first kind (see Subsection \ref{sec:stirling}). Hence, we have
\begin{equation*}
	\mathcal{A}_{k,r}(q) = \sum_{j=0}^{k-1} C_{k,r,j} \left(\frac{B_{j+1}}{2(j+1)}+G_{j+1}(\tau)\right), \quad	\mathcal{A}_{a,k,r}(q) = \operatorname{coeff}_{\lrB{\zeta^a}}\exp \left(\sum_{n\geq 1} \frac{(-1)^{n+1}}{n} \mathcal{A}_{nk,nr}(q) \zeta^n \right) .
\end{equation*}

\subsection{Proof of \Cref{C:Congruences}}
We prove \Cref{C:Congruences} using techniques from $p$-adic modular forms.
\begin{proof}
By the proof of \Cref{P:Quasimodularity}, $\mathcal{A}_{a,2r,r}$ lies in the algebra $\mathbb{Q}\left[ G_2, G_4, G_6  \right] \cap \mathbb{Z}\llbracket q \rrbracket$ of quasimodular forms with integral coefficients, and, even more, that it can be expressed as
\begin{align*}
	\mathcal{A}_{a,2r,r} = F_0 + F_2 + \cdots + F_{2ar},
\end{align*}
where each $F_{2k}$ is a pure weight $2k$ quasimodular form with integral coefficients. We note that the only weight 0 quasimodular forms are constant functions.
	By \cite[Lemma 5.2]{AOS2023}, $E_2$ is a modular form (mod $p^b$) of weight larger than $2$ for any prime power $p^b$. Hence, $E_2$ is a modular form of positive weight (mod $m$) for any $m \geq 2$. Since $E_4,E_6$ are modular forms, it follows that each $F_{2k}$ is a modular form (mod $m$) for any $m \geq 2$. We now employ a result of Serre \cite{Ser76} on the action of Hecke operators on modular forms (see also \cite[Lemma 2.63, Theorem 2.65]{Ono2004}). Recall that the \textit{Hecke operators} $T_\ell$ for $\ell$ prime are linear operators on the space of weight~$k$ modular forms defined by the action on $f(\tau)=\sum_{n \geq 0} c(n) q^n$ by
	\begin{align*}
		T_{\ell}\!\lp f\rp\!	 (\tau) := \sum_{n \geq 0} \lp c(\ell n) + \ell^{k-1} c\lp \dfrac{n}{\ell} \rp \rp q^n,
	\end{align*}
where we set $c(\frac{n}{\ell}):=0$ if $\ell\nmid n$. The aforementioned result of Serre states that for any fixed positive integral weight modular form $f$ and any $m \geq 2$, there exists a positive proportion of primes $\ell$ such that $T_\ell(f) \equiv 0 \pmod{m}$. Note that this result also holds for any $q$-series which is congruent (mod~$m$) to a modular form of positive integral weight. Further, observe that if $T_\ell(f) \equiv 0 \pmod{m}$, then $c(\ell n)\equiv 0 \pmod{m}$ for all $n \not \equiv 0 \pmod{\ell}$, and so for $\ell \centernot | \beta$ we have $c(\ell^2 n + \ell \beta)\equiv 0 \pmod{m}$. Since for each $F_{2j}$ relations of this sort hold for infinitely many primes~$\ell$, by the Chinese remainder theorem we can construct the required infinitely many arithmetic progressions.
\end{proof}

\section{Proof of \cref{T:Completing}}\label{sec:completing}

In this section, we show the modularity of the functions $\mathcal{B}_{a,k,r,s}$. First, we prove that we can restrict to $a=1$ (recall $\mathcal{B}_{k,r,s}=\mathcal{B}_{1,k,r,s}$).
	\begin{prop} \label{P:PolynomialityB}
		The function $\mathcal{B}_{a,k,r,s}$ is a polynomial of degree at most $a$ in $\mathcal{B}_{nk,nr,ns}$ for $n\in \N$.
	\end{prop}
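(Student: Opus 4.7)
The plan is to mimic the argument used for Proposition~\ref{P:Quasimodularity}, which expressed $\mathcal{A}_{a,k,r}$ as a polynomial in the single-variable functions $\mathcal{A}_{nk,nr}$ via the quasi-shuffle formalism. The extra parameter $s$ simply enlarges the alphabet from $\N^2$ to $\N^3$, so the same exponential identity should produce the desired polynomial relation.

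Concretely, I would first set up the quasi-shuffle algebra on $A=\N^3$ with diamond product
\begin{equation*}
	(k,r,s)\diamond(\ell,t,u) := (k+\ell,r+t,s+u),
\end{equation*}
and check that the assignment sending a word $(k_1,r_1,s_1)(k_2,r_2,s_2)\cdots(k_a,r_a,s_a)$ to the generalized $q$-series
\begin{equation*}
	\sum_{1\le n_1<\cdots<n_a} \prod_{j=1}^a \dfrac{q^{r_j n_j^2+s_j n_j}}{(1-q^{n_j})^{k_j}}
\end{equation*}
extends to a surjective algebra homomorphism from $(\IQ A,\qsh)$ onto the subalgebra of $\IQ\llbracket q\rrbracket$ generated by such series. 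The verification is the standard one: splitting a double sum according to $n_1<n_2$, $n_1>n_2$, or $n_1=n_2$ produces exactly the three terms of the quasi-shuffle recursion, and the diagonal contribution yields the diamond product because $q^{rn^2+sn}\cdot q^{tn^2+un}/(1-q^n)^{k+\ell}$ corresponds to $(k+\ell,r+t,s+u)$. Under this homomorphism, the iterated concatenation $(k,r,s)^{\circ a}$ maps to $\mathcal{B}_{a,k,r,s}$, while the $n$-fold diamond power $(k,r,s)^{\diamond n}=(nk,nr,ns)$ maps to $\mathcal{B}_{nk,nr,ns}$.

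Now I would apply \Cref{prop:qsh} to the single letter $(k,r,s)\in A$ and push the resulting identity through the homomorphism. This yields the generating-function identity
\begin{equation*}
	\exp\!\left(\sum_{n\ge1}\frac{(-1)^{n+1}}{n}\,\mathcal{B}_{nk,nr,ns}(q)\,X^n\right) \;=\; \sum_{j\ge0}\mathcal{B}_{j,k,r,s}(q)\,X^j,
\end{equation*}
with the convention $\mathcal{B}_{0,k,r,s}=1$. Extracting the coefficient of $X^a$, one finds that $\mathcal{B}_{a,k,r,s}$ is a polynomial with rational coefficients in the functions $\mathcal{B}_{nk,nr,ns}$ for $1\le n\le a$. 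A quick inspection of the expansion of $\exp$ shows that every monomial appearing has total $X$-degree $a$ and involves at most $a$ factors, so the resulting polynomial has degree at most $a$ in these generators, giving the stated bound.

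The only non-routine step is justifying that the ``quadratic'' weights $q^{rn^2+sn}$ behave compatibly with the quasi-shuffle structure; once this is in place, the rest is a direct transcription of the proof of \Cref{P:Quasimodularity}. I expect this to cause no real difficulty since the diamond rule only touches the exponents additively on the diagonal $n_1=n_2$, and both the linear part $sn$ and the quadratic part $rn^2$ add in the desired way. No new cancellation or estimate is needed.
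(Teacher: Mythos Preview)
Your proposal is correct and follows essentially the same approach as the paper: set up the quasi-shuffle algebra on $\N^3$ with diamond product $(k,r,s)\diamond(\kappa,\varrho,\sigma)=(k+\kappa,r+\varrho,s+\sigma)$, observe that the map to the $q$-series is an algebra homomorphism, and apply \Cref{prop:qsh} to obtain the exponential identity and hence the polynomial expression. The paper's proof is in fact slightly terser than yours, merely noting the analogy with \Cref{P:Quasimodularity} and recording the resulting exponential formula.
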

	\begin{proof}
The proof is similar to the proof of \Cref{P:Quasimodularity}. The main observation is that, analogously, the quasi-shuffle algebra on $\IN^3$ with $(k,r,s)\diamond (\kappa,\varrho,\sigma)=(k+\kappa,r+\varrho,s+\sigma)$ is surjective onto the algebra generated by the function
\[
\sum_{1\leq n_1<n_2<\cdots<n_a} \prod_{j=1}^a \dfrac{q^{r_jn_j^2+s_jn_j}}{(1-q^{n_j})^{k_j}}.
\]
In particular, the multiplication of the above $q$-series corresponds to the quasi-shuffle product on~$\IN^3$. \Cref{prop:qsh} then implies that
\begin{equation*}
	\exp \left(\sum_{n\geq 1} \frac{(-1)^{n+1}}{n} \mathcal{B}_{nk,nr,ns}(q) X^n \right) = \sum_{j\geq 0} \mathcal{B}_{j,k,r,s}(q) X^j,
\end{equation*}
that is, $\mathcal{B}_{a,k,r,s}$ can be expressed as a polynomial in the $\mathcal{B}_{nt,nr,ns}$.
	\end{proof}

	Next, we write the functions $\mathcal{B}_{k,r,s}$ in terms of derivatives of Appell functions $A_{2r}$ defined in~\eqref{eq:Appell}. For convenience, we let $K:=k+r-s-1$ throughout and define
	\begin{equation*}
			f_{r,K,\ell} (\tau) := \left[\frac{\partial^\ell}{\partial z^\ell}\left(\left(\zeta^{-\frac12}-\zeta^\frac12\right)\zeta^{-K}A_{2r}\left(z,-K\tau;\tau\right)\right)\right]_{z=0}.
		\end{equation*}
	Moreover, we extend the definition of $\mathcal{B}_{k,r,s}$ by allowing $s\leq-1$ (in that case $\mathcal{B}_{k,r,s}\in \mathbb{Q}[q^{-1}]\llbracket q\rrbracket$).
	\begin{lem}\label{lem:42}
		For $0\leq s\leq k$, we have
		\begin{align*}
			\mathcal{B}_{k,r,s}(q) + \mathcal{B}_{k,r,k-s}(q) &= -\frac1{k!} \sum_{\ell=0}^{k} \frac{\a_{k-s-\frac12}(k,\ell)}{(2\pi i)^{\ell}} f_{r,K,\ell}(\tau),\\
			f_{r,K,\ell} (\tau) &= -(2\pi i)^\ell{\lrb{\lrb{\frac{1}{2}+s-k}^\ell+\sum_{j={ 1}}^{\ell}j! \beta_{{\frac{1}{2}+s-k}}(\ell,j) \lrb{\mathcal B_{j,r,s-k+j}(q) + \mathcal B_{j,r,k-s}(q)}}},
		\end{align*}
where the generalized Stirling numbers $\alpha_{c}(k,\ell)$ and $\beta_{c}(\ell,j)$ are defined by \eqref{eq:defalphabeta}.
	\end{lem}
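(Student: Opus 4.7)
The plan is to unfold the definition of $f_{r,K,\ell}$ by expanding the Appell function as a Lambert-type series in $\zeta$, isolating the singular $n=0$ contribution so that the factor $\zeta^{-1/2}-\zeta^{1/2}$ cancels the pole, and then invoking the second identity of \Cref{lem:zetazderivative} to translate $z$-derivatives into $\zeta$-derivatives. These latter can be evaluated termwise via Leibniz and recognized as the claimed combinations of $\mathcal B_{j,r,\cdot}$.

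Concretely, I would first establish the formula for $f_{r,K,\ell}$. Inserting the defining series of $A_{2r}$ and setting $A:=r-K=s-k+1$ and $\alpha:=A-\tfrac12=\tfrac12+s-k$, the $n=0$ summand of $\zeta^{-K}A_{2r}(z,-K\tau;\tau)$ gives the pole $\zeta^A/(1-\zeta)$, and multiplying by $\zeta^{-1/2}-\zeta^{1/2}=-\zeta^{-1/2}(\zeta-1)$ reduces it to $\zeta^\alpha$. Collecting the rest yields the decomposition
\begin{equation*}
F(z,\tau):=(\zeta^{-1/2}-\zeta^{1/2})\zeta^{-K}A_{2r}(z,-K\tau;\tau) = \zeta^\alpha\bigl(1+H(z,\tau)\bigr),\quad H(z,\tau):=(1-\zeta)\sum_{n\ne 0}\frac{q^{rn^2+An}}{1-\zeta q^n},
\end{equation*}
with $H$ holomorphic and vanishing at $z=0$. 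The second identity of \Cref{lem:zetazderivative} with $c=\alpha$, together with the observation that on functions of $\zeta$ alone $\zeta^{-1}\partial_z = 2\pi i\,d/d\zeta$, then yields $f_{r,K,\ell}=(2\pi i)^\ell\sum_{j=0}^\ell \beta_\alpha(\ell,j)\,(d^j(1+H)/d\zeta^j)|_{\zeta=1}$. The $j=0$ contribution is $\beta_\alpha(\ell,0)=\alpha^\ell$ (immediate from the $\beta_c$-recurrence), and for $j\ge 1$ Leibniz collapses the computation to $(d^jH/d\zeta^j)|_{\zeta=1}=-j\,G^{(j-1)}(1)$, where $G:=\sum_{n\ne 0}(1-\zeta q^n)^{-1}q^{rn^2+An}$. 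Termwise differentiation gives $G^{(j-1)}(1)=(j-1)!\sum_{n\ne 0}\frac{q^{rn^2+(A+j-1)n}}{(1-q^n)^j}$; the $n\ge 1$ half equals $\mathcal B_{j,r,s-k+j}$ (since $A+j-1=s-k+j$), while substituting $n=-n'$ and using $(1-q^{-n'})^{-j}=(-1)^j q^{jn'}(1-q^{n'})^{-j}$ converts the $n\le -1$ half into the $\mathcal B_{j,r,k-s}$ term (since $1-A=k-s$).

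For the first identity I would invert the just-derived formula via the orthogonality
\begin{equation*}
\sum_{\ell=0}^k \alpha_{-c}(k,\ell)\,\beta_c(\ell,j) = \delta_{k,j} \qquad (0\le j\le k),
\end{equation*}
which is a generalized Stirling-type inversion stemming from the generating identity $\prod_{i=0}^{k-1}(x-c-i)=\sum_\ell \alpha_{-c}(k,\ell)x^\ell$ (an easy induction on the $\alpha_{-c}$-recurrence) together with the dual expansion of monomials in the corresponding shifted falling factorials. Multiplying the formula for $f_{r,K,\ell}$ by $-\alpha_{-\alpha}(k,\ell)/(k!(2\pi i)^\ell)$ and summing over $\ell$, the $\alpha^\ell$ piece vanishes because $\sum_\ell \alpha_{-\alpha}(k,\ell)\alpha^\ell = \prod_{i=0}^{k-1}(-i) = 0$, and the orthogonality kills every $j<k$ summand, leaving exactly $\mathcal B_{k,r,s}+\mathcal B_{k,r,k-s}$ on the right.

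The main technical obstacle is the careful bookkeeping at the pole $z=0$ and along the $n\le -1$ half of the Lambert series: the factor $\zeta^{-1/2}-\zeta^{1/2}$ must simultaneously cancel the simple pole of $A_{2r}$ and deliver the correct $\zeta^\alpha$ prefactor, and the signs produced by rewriting $(1-q^{-n'})^{-j}$ must be reconciled with the Laurent-series extension of $\mathcal B_{j,r,s-k+j}$ to negative second argument introduced in the sentence preceding the lemma. Once these sign conventions are pinned down consistently, the remaining ingredients are purely algebraic: \Cref{lem:zetazderivative} and the orthogonality of the generalized Stirling numbers $\alpha_c,\beta_c$.
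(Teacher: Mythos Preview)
Your approach is essentially the paper's: you make explicit what the paper calls ``a direct calculation''---unwinding the Lambert expansion of $A_{2r}$, cancelling the simple pole with $\zeta^{-1/2}-\zeta^{1/2}$, and recognizing the resulting $\zeta$-derivatives as the $\mathcal B$-sums. The paper packages exactly your quantity $-\tfrac1{j!}(d/d\zeta)^j(1+H)|_{\zeta=1}$ into an auxiliary function
\[
g_{k,r,s,j}:=-\frac{1}{j!(2\pi i)^j}\Bigl[(\zeta^{-1}\partial_z)^j\bigl(\zeta^{k-s-\frac12}F\bigr)\Bigr]_{z=0}
\]
(note $\zeta^{k-s-1/2}F=1+H$), states its value, and then applies the two halves of \Cref{lem:zetazderivative} in opposite directions to reach both identities.

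The one place you diverge is the first identity. You obtain it by \emph{inverting} the second via the Stirling orthogonality $\sum_{\ell}\alpha_{-c}(k,\ell)\beta_c(\ell,j)=\delta_{k,j}$ and then checking separately that the constant piece $\sum_\ell\alpha_{-\alpha}(k,\ell)\alpha^\ell=\prod_{i=0}^{k-1}(-i)$ vanishes. The paper instead applies the \emph{first} identity of \Cref{lem:zetazderivative} directly at $j=k$, which gives $g_{k,r,s,k}=-\tfrac1{k!}\sum_\ell\alpha_{k-s-1/2}(k,\ell)(2\pi i)^{-\ell}f_{r,K,\ell}$ in one line. Your orthogonality is of course equivalent to the statement that the two halves of \Cref{lem:zetazderivative} are mutual inverses, so both routes are valid; the paper's is just shorter and avoids the separate vanishing check.
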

	\begin{proof}
	We define
	\begin{equation*}
		g_{k,r,s,j}(z;\t) := -\frac{1}{j!(2\pi i)^j} \left[\left(\zeta^{-1}\frac{\partial}{\partial z}\right)^j\left(\zeta^{{ k-s-\frac12}}\left(\left(\zeta^{-\frac12}-\zeta^\frac12\right)\zeta^{-K}{A}_{2r}\left(z,-K\tau;\tau\right)\right)\right)\right]_{z=0}.
	\end{equation*}
		A direct calculation gives {that for $j\in\N_0$, we have}
\[
		g_{k,r,s,j}(z;\t)= \begin{cases} -1 & \text{if}\ j=0,\\ \mathcal B_{j,r,s-k+j}(q) + \mathcal B_{j,r,k-s}(q) & \text{if}\ j>0.\end{cases}
	\]
The first part of the lemma now follows by setting $j=k$ and \Cref{lem:zetazderivative}, whereas the second part follows directly from \Cref{lem:zetazderivative} noting that $\beta_c(\ell,0)=c^\ell$.
	\end{proof}

	We next find modular completions for the $f_{r,K,\ell}$.
\begin{defn}\label{def:phi}
	For $n\geq -1$, define
	\begin{align*}
		\chi_{n}^{+}(\tau) := -\frac{(2\pi i)^n}{(n+1)!} \sum_{\substack{0\le j\le n+1\\j\equiv n+1\Pmod2}} \binom{n +1}{j}  \frac{B_{n-j+1}{\left(\frac{1}{2}\right)}}{(2\pi i)^j}f_{r,K,j}(\tau),
	\end{align*}
		where we suppress the dependence on~$k$ and~$r$ in the notation.
	Letting $\chi_{-1}^-:=0$ and similarly suppressing dependence on~$k$ and~$r$ in the notation, for $n\in\N_0$ we define
\begin{align*}
		\chi_{n}^{-}(\tau) &= \chi_{n}^{-}(\tau,\overline\tau)\\
		&:= -\frac i{2n!} \sum_{j=0}^{2r-1} \vartheta\left(\left(j-K\right)\tau+\frac12;2r\tau\right)\left[\frac{\partial^n}{\partial z^n}\left(\zeta^{-K+j}R\left(2r z+\left(K-j\right)\tau+\frac12;2r\tau,2r\overline\tau\right)\right)\right]_{z=0}.
\end{align*}
Moreover, for $n\geq -1$ we set
	\begin{align*}
		{\widehat{\chi}_{n}}\left(\tau\right) = {\widehat{\chi}_{n}}\left(\tau,\overline\tau\right) :=\,& \chi_{n}^{+}(\tau) + \chi_{n}^{-}\left(\tau\right), \qquad
		\widehat \psi_n(\tau) = \widehat \psi_n(\tau,\overline\tau) :=\,	\psi_n^+(\tau) + \psi_n^-(\tau),
	\end{align*}
	where
	\begin{equation*}
		\psi_n^+(\tau) := \sum_{j\ge0} \frac{\left(-\frac{2\pi r}v\right)^j}{j!} \chi_{n-2j}^+(\tau), \qquad \psi_n^-(\tau) := \sum_{j\ge0} \frac{\left(-\frac{2\pi r}v\right)^j}{j!} \chi_{n-2j}^-(\tau).
	\end{equation*}
\end{defn}
	\begin{rem}
		\begin{enumerate}[wide,labelwidth=!,labelindent=0pt,label=\rm(\arabic*)]
			\item Note that $f_{r,K,\ell}$, $\psi_n^+$, and $\chi_n^{+}$ may be written in terms of each other. More precisely,
			\begin{equation*}
			f_{r,K,\ell} = 2\pi i\ell!\sum_{0\leq j\leq\frac{\ell}{2}}\frac{(-1)^{j}\pi^{2j}}{(2j+1)!} \chi_{\ell-2j-1}^+
			\quad \text{and}\quad
			\chi_n^+(\tau)	=\sum_{j\geq 0} \frac{\left(\frac{2\pi r}v\right)^j}{j!}  \psi_{n-2j}^+(\tau).
			\end{equation*}
			Note that in the second formula we implicitly use that $\chi_n^+$ and $\psi_n^+$ depend on $r$ (suppressed from the notation). In other words, statements about the modularity of $\psi_n^+$, imply analogous statements about the modularity of $\mathcal{B}_{k,r,s}+\mathcal{B}_{k,r,k-s}$, $f_{r,K,\ell}$, and $\chi_n^+$.
			\item The explicit formulas for $\widehat\psi_{-1}$ and $\widehat\psi_{1}$ in \Cref{E:Psi} follows from the above definitions and~\eqref{eq:Rell}.
		\end{enumerate}
	\end{rem}
	The following theorem shows modularity properties of $\widehat\psi_n$.
	\begin{thm}\label{T:Trans}
		We have for $\pmat{a & b \\ c & d}\in\SL_{2}(\Z)$
		\begin{equation*}
			\widehat{\psi}_{n}\left(\frac{a\tau+b}{c\tau+d}\right) = (c\tau+d)^{n+1} \widehat{\psi}_{n}(\tau).
		\end{equation*}
	\end{thm}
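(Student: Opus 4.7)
The plan is to realise the completed coefficients $\{\widehat{\chi}_n\}_{n\geq -1}$ uniformly as the Laurent coefficients at $z=0$ of the single (non-holomorphic) generating function
\[
\widehat F(z;\tau):=\zeta^{-K}\widehat{A}_{2r}(z,-K\tau;\tau),
\]
and then to transfer the Jacobi transformation of $\widehat{A}_{2r}$ to the modular transformation of $\widehat\psi_n$ via an explicit binomial identity.

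First I would verify the Laurent expansion $\widehat F(z;\tau)=\sum_{n\geq -1}\widehat\chi_n(\tau)z^n$ by splitting $\widehat{A}_{2r}$ into its holomorphic Appell part and its $\vartheta$--$R$ completion. For the holomorphic piece, combining the Bernoulli expansion
\[
\frac{1}{\zeta^{-1/2}-\zeta^{1/2}}=-\sum_{n\geq 0}\frac{B_n(\tfrac12)(2\pi i)^{n-1}}{n!}z^{n-1}
\]
with the Taylor series $(\zeta^{-1/2}-\zeta^{1/2})\zeta^{-K}A_{2r}(z,-K\tau;\tau)=\sum_{\ell\geq 0}f_{r,K,\ell}(\tau)z^\ell/\ell!$ reproduces $\chi_n^+$ exactly; the parity constraint $j\equiv n+1\pmod 2$ in the definition of $\chi_n^+$ arises automatically from $B_{2m+1}(\tfrac12)=0$. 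For the completion piece, the integer shift identities \eqref{E:Elliptic} and \eqref{eq:Rell} transform the arguments $\pm(2r-1)/2$ in the definition of $\widehat{A}_{2r}$ into $\pm 1/2$ (producing an overall sign $(-1)^{2r-1}=-1$), and the resulting Taylor coefficients in $z$ match the definition of $\chi_n^-$ term-by-term.

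Second, applying \Cref{L:Ahat}\,(1) with $(z_1,z_2)=(z(c\tau+d),-K(a\tau+b))$ and then \Cref{L:Ahat}\,(2) with $(n_1,m_1,n_2,m_2)=(0,0,-K(a-1),-Kb)$ to bring the second argument from $-K(a\tau+b)$ to $-K\tau$, the linear-in-$z$ contributions in the exponent combine, via $ad-bc=1$, into a single factor $\zeta^K$ that cancels the $\zeta^{-K}$ in the definition of $\widehat F$. This yields the generating-function transformation
\[
\widehat F(z;\gamma\tau)=(c\tau+d)\,e^{-2\pi icr(c\tau+d)z^2}\,\widehat F\!\left(z(c\tau+d);\tau\right),
\]
and comparing coefficients of $z^n$ gives
\[
\widehat\chi_n(\gamma\tau)=\sum_{\ell\geq 0}\frac{(-2\pi icr)^\ell(c\tau+d)^{n+1-\ell}}{\ell!}\,\widehat\chi_{n-2\ell}(\tau).
\]

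Finally I would substitute this into $\widehat\psi_n(\gamma\tau)=\sum_{j\geq 0}(-2\pi r/\mathrm{Im}(\gamma\tau))^j\widehat\chi_{n-2j}(\gamma\tau)/j!$, expand $-2\pi r/\mathrm{Im}(\gamma\tau)=-2\pi r|c\tau+d|^2/v$ via $|c\tau+d|^2=(c\tau+d)^2-2icv(c\tau+d)$, and collect the resulting triple sum by the total index $M$ of $\widehat\chi_{n-2M}(\tau)$. The key identity
\[
\sum_{k+\ell=p}\binom{p}{k}(4\pi icr)^k(-2\pi icr)^\ell=(2\pi icr)^p
\]
causes the apparent non-modular contributions (proportional to powers of $c/(c\tau+d)$) to telescope, leaving each coefficient of $\widehat\chi_{n-2M}(\tau)$ equal to the product $(c\tau+d)^{n+1}(-2\pi r/v)^M/M!$ required by $(c\tau+d)^{n+1}\widehat\psi_n(\tau)$. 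The main obstacle is executing this final triple-sum rearrangement cleanly: three distinct sources of $(c\tau+d)$- and $v$-dependence, namely the expansion of $|c\tau+d|^2/v$, the Gaussian factor in the transformation of $\widehat F$, and the powers $(c\tau+d)^{n+1-\ell}$, must telescope, and while each individual cancellation is elementary, tracking them simultaneously is the technical core of the proof.
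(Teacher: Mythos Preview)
Your overall strategy matches the paper's: identify the $\widehat\chi_n$ as Laurent coefficients at $z=0$ of $\widehat F(z;\tau)=\zeta^{-K}\widehat A_{2r}(z,-K\tau;\tau)$ and then transfer the Jacobi transformation of $\widehat A_{2r}$ to $\widehat\psi_n$. The paper streamlines the middle step by first applying the elliptic shift (\Cref{L:Ahat}\,\ref{I:Ahatell}) to reduce once and for all to $\widehat F=\widehat A_{2r}(z,0;\tau)$, eliminating $K$ entirely; your route through \Cref{L:Ahat}\,\ref{I:Ahatsl2z} followed by \ref{I:Ahatell} reaches the same transformation law but with more bookkeeping. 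For the final passage from the Jacobi law to modularity of $\widehat\psi_n$, the paper invokes Proposition~3.1 of \cite{Br} as a black box rather than computing by hand.

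There is a genuine gap in your attempt to replace that black box by a direct computation. Your binomial identity $\sum_{k+\ell=p}\binom{p}{k}(4\pi icr)^k(-2\pi icr)^\ell=(2\pi icr)^p$ is correct, but it does \emph{not} make the non-modular contributions vanish or telescope. Carrying out your recipe exactly as you describe, the coefficient of $\widehat\chi_{n-2M}(\tau)$ in $\widehat\psi_n(\gamma\tau)$ works out to
\[
\frac{(c\tau+d)^{n+1}}{M!}\left(-\frac{2\pi r}{v}+\frac{2\pi icr}{c\tau+d}\right)^{\!M},
\]
not the required $\frac{(c\tau+d)^{n+1}}{M!}(-\tfrac{2\pi r}{v})^M$; already $n=1$, $M=1$ fails. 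For your cancellation to occur the inner sum would have to collapse to $0^p$ for $p\ge1$, i.e.\ $4\pi icr$ and $-2\pi icr$ would have to be exact negatives, which they are not. A second point you pass over is that $\widehat F$ is genuinely non-holomorphic in $z$ through the $R$-term, so one must expand $\widehat F(z,\overline z;\tau)=\sum_n\chi_n(\overline z;\tau)z^n$ and specialise $\overline z=0$ afterwards; this is exactly the framework of \cite[Proposition~3.1]{Br}, and is why the paper defers to that result.
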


	\begin{proof}
		Define
		\begin{equation*}
			\widehat{f}_{r,K}(z;\tau) = \widehat{f}_{r,K}(z;\tau,\overline\tau) :=\zeta^{-K} \widehat{A}_{2r}\left(z,-K\tau;\tau\right) = f_{r,K}^{+}(z;\tau) + f_{r,K}^{-}(z;\tau),
		\end{equation*}
		where
		\begin{align*}
			f_{r,K}^{+}(z;\tau) &:= \zeta^{-K} A_{2r}\left(z,-K\tau;\tau\right),\\
			f_{r,K}^-(z;\tau) &= f_{r,K}^-(z;\tau,\overline\tau) := \zeta^{-K} \left(\widehat A_{2r}\left(z,-K\tau;\tau\right)-A_{2r}\left(z,-K\tau;\tau\right)\right).
		\end{align*}
		We claim that
		\begin{equation*}
			\widehat{f}_{r,K}\left(z;\tau\right) = \sum_{n\geq-1}{ \widehat{\chi}_{n}}(\tau)z^n	, \qquad f_{r,K}^{+}	(z;\tau) = \sum_{n\ge-1} \chi_{n}^{+}(\tau) z^{n}, \qquad f_{r,K}^{-} (z;\tau) = \sum_{n\geq 0} \chi_{n}^{-}(\tau) z^{n}.
		\end{equation*}
		We have
		\begin{align*}
			&f_{r,K}^+(z;\tau) = \frac{\zeta^\frac12}{1-\zeta}\left(\zeta^{-\frac12}-\zeta^\frac12\right) \zeta^{-K} A_{2r}\!\left(z,-K\tau;\tau\right)\\
			&= -\!\sum_{a\ge-1} B_{a +1}\!\left(\frac12\right) \frac{(2\pi iz)^a}{(a+1)!} \sum_{b\ge0} \left[\frac{\partial^b}{\partial z^b}\left(\!\left(\zeta^{-\frac12}-\zeta^\frac12\right)\zeta^{-K}A_{2r}\!\left(z,-K\tau;\tau\right)\!\right)\right]_{z=0}\frac{z^b}{b!}\\
			&=-\!\sum_{a\ge-1} B_{a +1}\!\left(\frac12\right) \frac{(2\pi iz)^a}{(a+1)!} \sum_{b\ge0} f_{r,K,b}(\tau) \frac{z^b}{b!}
			= -\!\sum_{n\ge-1} (2\pi iz)^n \!\sum_{0\le j\le n+1} \frac{f_{r,K,j}(\tau)}{j!(n-j +1)!} \frac{B_{n-j +1}\!\left(\frac12\right)}{(2\pi i)^j}\\
			&= -\sum_{n\ge-1} \frac{(2\pi iz)^n}{(n +1)!} \sum_{0\le j\le n+1} \binom{n +1}{j}  \frac{B_{n-j+1}\!\left(\frac12\right)}{(2\pi i)^j}f_{r,K,j}(\tau) = \sum_{n\ge-1} \chi_{n}^{+}(\tau)z^n.
		\end{align*}
		For the final step we note that $B_n(\frac12)=0$ for $n$ odd. Thus only those $j$ survive for which $j\equiv n+1\Pmod2$. Next, we compute
		\begin{align*}
			\hspace{.5cm}&\hspace{-.5cm}f_{r,K}^-(z;\tau) = \frac{i\zeta^{-K}}2 \sum_{j=0}^{2r-1} \zeta^j \vartheta\left(-K\tau+j\tau+\frac{2r-1}2;2r\tau\right) R\left(2r z+K\tau-j\tau-\frac{2r-1}2;2r\tau\right)\\
			&= -\frac{i\zeta^{-K}}2 \sum_{j=0}^{2r-1} \zeta^j \vartheta\left(\left(j-K\right)\tau+\frac12;2r\tau\right) R\left(2r z+\left(K-j\right)\tau+\frac12;2r\tau\right)\\
			&= -\frac i2 \sum_{j=0}^{2r-1} \vartheta\left(\left(j-K\right)\tau+\frac12;2r\tau\right) \sum_{a\ge0} \left[\frac{\partial^a}{\partial z^a}\left(\zeta^{-K+j}R\left(2r z+\left(K-j\right)\tau+\frac12;2r\tau\right)\right)\right]_{ z=0} \frac{z^a}{a!} \\
			&= \sum_{n\ge0} \chi_{n}^{-}(\tau)z^{n}.
		\end{align*}
		To determine the transformation law, we rewrite $\widehat{f}_{r,K}$. By \Cref{L:Ahat} \ref{I:Ahatell},
		we obtain
		\begin{equation*}
			\widehat{f}_{r,K}(z;\tau) = \widehat{A}_{2r}(z,0;\tau).
		\end{equation*}
		In particular, $\widehat{f}_{r,K}$ is independent of $K$ and so we write $\widehat{f}_{r}$. From \Cref{L:Ahat} \ref{I:Ahatsl2z}, we have
		\begin{equation*}
			\widehat{f}_{r}\left(\frac{z}{c\tau+d};\frac{a\tau+b}{c\tau+d}\right)=\widehat{A}_{2r} \left(\frac{z}{c\tau+d},0;\frac{a\tau+b}{c\tau+d}\right) = (c\tau+d)e^{-\frac{2\pi ircz^2}{c\tau+d}} \widehat{A}_{2r}(z,0;\tau).
		\end{equation*}
		Thus $\widehat{f}_{r}$ satisfies the modular transformation law of a Jacobi form of index $-2r$.

		Note that $\widehat{f}_{r}(z;\tau)$ has a simple pole at $z=0$.
		Thus we may write
		\begin{equation*}
			\widehat{f}_{r}(z;\tau)= \sum_{n\geq-1} \chi_{n}(\bar{z};\tau)z^n
		\end{equation*}
		for certain $\chi_{n}(\overline z;\tau)=\chi_n(\overline z;\tau,\overline\tau)$. Then we have
		\begin{equation*}
			\widehat{\psi}_{n}(\tau) = \sum_{j} \frac{\left(-\frac{2\pi r}{v}\right)^{j}}{j!} \chi_{n-2j}(0;\tau).
		\end{equation*}
		Note that this is a finite sum. By Proposition 3.1 of \cite{Br}, $\widehat{\psi}_{n}$ transforms like a modular form of weight $n+1$.
	\end{proof}

By taking $\pmat{a & b \\ c & d}=\pmat{-1 & 0 \\ 0 & -1}$ in \Cref{T:Trans}, we obtain the following corollary.
\begin{cor}\label{cor:psieven}
For $n$ even, we have $\widehat{\psi}_n=0$.
\end{cor}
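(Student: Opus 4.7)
The plan is essentially immediate given \Cref{T:Trans}. I would apply the transformation law to the specific element $\gamma = \begin{psmallmatrix}-1 & 0 \\ 0 & -1\end{psmallmatrix} \in \SL_2(\Z)$. Since $\gamma$ acts trivially on $\mathbb{H}$, namely $\frac{a\tau+b}{c\tau+d} = \frac{-\tau}{-1} = \tau$, the left-hand side of the transformation formula simply returns $\widehat\psi_n(\tau)$. Meanwhile, the automorphy factor evaluates to $(c\tau+d)^{n+1} = (-1)^{n+1}$, independent of $\tau$.

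Putting these together, \Cref{T:Trans} yields the identity
\begin{equation*}
\widehat\psi_n(\tau) = (-1)^{n+1}\widehat\psi_n(\tau).
\end{equation*}
For even $n$, the factor $(-1)^{n+1} = -1$, so $\widehat\psi_n(\tau) = -\widehat\psi_n(\tau)$, which forces $\widehat\psi_n \equiv 0$. (For odd $n$ the relation is trivially satisfied and gives no information.)

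There is no real obstacle here; the only thing worth checking is that $\gamma$ is genuinely in $\SL_2(\Z)$ (which is clear since $\det\gamma = 1$) so that \Cref{T:Trans} applies. This is the standard ``parity vanishing'' argument familiar from the theory of modular forms, where the presence of $-I \in \SL_2(\Z)$ forces all modular forms of odd weight on the full modular group to vanish; here the roles of even and odd are simply swapped because $\widehat\psi_n$ has weight $n+1$.
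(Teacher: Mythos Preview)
Your argument is correct and matches the paper's own proof exactly: the paper simply states that the corollary follows by taking $\begin{psmallmatrix}a & b \\ c & d\end{psmallmatrix}=\begin{psmallmatrix}-1 & 0 \\ 0 & -1\end{psmallmatrix}$ in \Cref{T:Trans}.
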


	The following lemma shows that we can reduce to the case $K=r$. The proof follows directly from \eqref{E:Elliptic}, \eqref{eq:multipletosingleappell}, and \eqref{eq:mu} with $\tau\mapsto\frac\tau{2r}$, $z=\frac j{2r}-\frac\tau{4r}$, $a=r-K$, and $b=0$ and \Cref{L:MuShift} with $n=K-r$, $\tau\mapsto\frac\tau{2r}$, $z_1=z$, and $z_2=\frac j{2r}-\frac\tau{4r}$.
	\begin{lem}\label{lem:K=r}
		We have
		\begin{multline*}
			A_{2r}\!\left(z,-K\tau;\tau\right)
			=\zeta^{K-r} A_{2r}\!\left(z,-r\tau;\tau\right) + \frac{(-1)^{K+r} i}{2r} q^{-\frac{(K-r)^2}{4r}- \frac{K-r}{4r}-\frac{\sgn(K-r)}{8r}-\frac{|K-r|}{4r}-\frac1{16r}} \zeta^{r-\frac{\sgn(K-r)}{8r}-\frac12} \\
			\times \sum_{j\pmod{2r}}\zeta_{4r}^{\sgn(K-r)j}\zeta_{2r}^{(K-r)j}\vartheta\!\left(\frac{-r\tau+j}{2r}+\frac{(2r-1)\tau}{4r};\frac\tau{2r}\right)\\
			\times\sum_{\ell=1}^{|K-r|} (-1)^\ell \zeta^{\sgn(K-r)\ell} \zeta_{2r}^{-\sgn(K-r)j\ell} q^{\frac{\ell\sgn(K-r)}{4r}-\frac{\ell(\ell-1)}{4r}+\frac{\ell|K-r|}{2r}}.\qedhere
		\end{multline*}
	\end{lem}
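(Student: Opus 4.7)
The plan is to follow the hint verbatim: start from $A_{2r}(z,-K\tau;\tau)$ and reduce it to its $K=r$ counterpart through a chain of transformations tracked on the level-$1$ pieces. First, apply the splitting \eqref{eq:multipletosingleappell} with $\ell=2r$, $z_1=z$, $z_2=-K\tau$ to obtain $A_{2r}(z,-K\tau;\tau)=\tfrac{1}{2r}\zeta^{(2r-1)/2}\sum_{j\pmod{2r}}A_{1}\!\left(z,w_j;\tau/(2r)\right)$, where $w_j:=\tfrac{-K\tau+j}{2r}+\tfrac{(2r-1)\tau}{4r}$. Writing $n:=K-r$ and $w_j^*:=\tfrac{j}{2r}-\tfrac{\tau}{4r}$ (the analogous argument when $K=r$), one has $w_j=w_j^*-n\cdot\tfrac{\tau}{2r}$. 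Factor each level-$1$ piece as $A_{1}(z,w_j;\tau/(2r))=\mu(z,w_j;\tau/(2r))\,\vartheta(w_j;\tau/(2r))$ using \eqref{eq:mu}, apply \Cref{L:MuShift} (with modular parameter $\tau/(2r)$, second argument $w_j^*$, and shift $n=K-r$) to the $\mu$-factor, and apply the elliptic transformation \eqref{E:Elliptic} (with modular parameter $\tau/(2r)$, $a=r-K$, $b=0$) to the $\vartheta$-factor.

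The main term is the first piece of \Cref{L:MuShift} times the transformed theta. The factors $(-1)^{K-r}q^{(K-r)^2/(4r)}$ from the $\mu$-shift exactly cancel the factors $(-1)^{r-K}q^{-(K-r)^2/(4r)}$ produced by the theta transformation, and the remaining exponentials collapse via $e^{2\pi in(z-w_j^*)}\cdot e^{-2\pi i(r-K)w_j^*}=\zeta^{K-r}$ (since $n=K-r$ forces the $w_j^*$-dependent exponents to cancel). Reassembling $\mu(z,w_j^*;\tau/(2r))\vartheta(w_j^*;\tau/(2r))=A_{1}(z,w_j^*;\tau/(2r))$ and invoking \eqref{eq:multipletosingleappell} in reverse with $z_2=-r\tau$ (for which $w_j^*$ is precisely the appropriate argument), this reproduces the first summand $\zeta^{K-r}A_{2r}(z,-r\tau;\tau)$.

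The correction term is the finite $\ell$-sum from \Cref{L:MuShift} times the transformed theta. Expanding $w_j^*=\tfrac{j}{2r}-\tfrac{\tau}{4r}$ in the remaining exponentials yields the identities $e^{\pi i\sgn(n)w_j^*}=\zeta_{4r}^{\sgn(n)j}q^{-\sgn(n)/(8r)}$ and $e^{2\pi inw_j^*}=\zeta_{2r}^{nj}q^{-n/(4r)}$, which combine to produce the $\zeta_{4r}^{\sgn(K-r)j}\zeta_{2r}^{(K-r)j}$ factor inside the $j$-sum and contribute $-\sgn(K-r)/(8r)-(K-r)/(4r)$ to the $q$-prefactor. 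Inside the $\ell$-sum, one similarly has $e^{2\pi i\ell\sgn(n)z}=\zeta^{\ell\sgn(n)}$ and $e^{-2\pi i\ell\sgn(n)w_j^*}=\zeta_{2r}^{-\ell\sgn(n)j}q^{\ell\sgn(n)/(4r)}$; combined with the $\mu$-shift's $\tilde{q}^{-\ell(\ell-1)/2+\ell|n|}$ (where $\tilde{q}=q^{1/(2r)}$) these reproduce exactly the $\ell$-summand in the claim.

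The sole obstacle is the meticulous bookkeeping of $q$- and $\zeta$-exponents in the correction term: the overall $q$-prefactor $q^{-(K-r)^2/(4r)-(K-r)/(4r)-\sgn(K-r)/(8r)-|K-r|/(4r)-1/(16r)}$ is assembled from five sources, namely the $\mu$-shift's $\tilde{q}^{-1/8-|n|/2}=q^{-1/(16r)-|K-r|/(4r)}$, the theta transformation's $q^{-(K-r)^2/(4r)}$, and the $\tau$-parts of $e^{\pi i\sgn(n)w_j^*}$ and $e^{2\pi inw_j^*}$; the sign collapses as $(-1)^{r-K}=(-1)^{K+r}$; and the overall factor $\zeta^{(2r-1)/2}$ from \eqref{eq:multipletosingleappell} combines with $\zeta^{-\sgn(n)/2}$ from $e^{-\pi i\sgn(n)z}$ to give the prefactor $\zeta^{r-\sgn(K-r)/2-1/2}$ displayed at the front of the correction.
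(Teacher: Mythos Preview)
Your proposal is correct and follows precisely the approach sketched in the paper: split $A_{2r}$ into level-$1$ pieces via \eqref{eq:multipletosingleappell}, factor each as $\mu\cdot\vartheta$ using \eqref{eq:mu}, shift the $\mu$-factor by \Cref{L:MuShift} and the $\vartheta$-factor by \eqref{E:Elliptic} (with exactly the substitutions the paper indicates), and reassemble. Your bookkeeping of the five sources for the $q$-prefactor and the $\ell$-summand is accurate; note only that your $\zeta$-prefactor $\zeta^{\,r-\sgn(K-r)/2-1/2}$ differs from the displayed $\zeta^{\,r-\sgn(K-r)/(8r)-1/2}$---your value is the one consistent with the computation, so this appears to be a misprint in the statement rather than an error in your argument.
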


	From now on, we restrict to the case $K=r$. Then, $\widehat\psi_n$ is a completion of $\psi_n^+$.
	\begin{lem}\label{L:PsiLimit}
		We have
		\begin{equation*}
			\lim_{\overline\tau\to-i\infty} \widehat\psi_n(\tau,\overline\tau) = \psi_n^+(\tau).
		\end{equation*}
	\end{lem}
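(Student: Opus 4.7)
The plan is to use the direct decomposition $\widehat{\psi}_n=\psi_n^++\psi_n^-$ from Definition~\ref{def:phi}, so that the lemma reduces to showing $\lim_{\overline\tau\to-i\infty}\psi_n^-(\tau,\overline\tau)=0$ (both sides being evaluated with $\tau,\overline\tau$ treated as independent variables). Writing out
\[
\psi_n^-(\tau,\overline\tau)=\sum_{j\geq 0}\frac{(-2\pi r/v)^j}{j!}\chi_{n-2j}^-(\tau,\overline\tau),
\]
the sum is finite since $\chi_m^-\equiv 0$ for $m<0$ (given $\chi_{-1}^-:=0$). As $\overline\tau\to-i\infty$, $v=(\tau-\overline\tau)/(2i)\to+\infty$, so the $j\geq 1$ prefactors $(-2\pi r/v)^j/j!$ tend to $0$; thus it suffices to show that every $\chi_m^-(\tau,\overline\tau)$ stays bounded, and in fact tends to $0$, as $\overline\tau\to-i\infty$ with $\tau$ fixed.

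Unpacking Definition~\ref{def:phi}, $\chi_m^-$ is a finite sum indexed by $0\leq j\leq 2r-1$ of $\vartheta((j-K)\tau+\tfrac12;2r\tau)$ (holomorphic in $\tau$ alone, hence bounded as $\overline\tau$ varies) multiplied by coefficients in the $z$-Taylor expansion at $z=0$ of the nonholomorphic factor $\zeta^{-K+j}R(2rz+(K-j)\tau+\tfrac12;2r\tau,2r\overline\tau)$. Leibniz-expanding the derivative $\partial_z^m$ reduces the task to showing that $R(z;\tau,\overline\tau)$ and each of its partial $z$-derivatives, evaluated at any fixed $z$, vanish as $\overline\tau\to-i\infty$.

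For the key estimate, I would invoke the Gaussian asymptotic
\[
\sgn(n)-E\!\left((n+y/v)\sqrt{2v}\right)=O\!\left(\frac{e^{-2\pi n^2v}}{\sqrt v}\right)\qquad(n\in\Z+\tfrac12,\,n\neq 0),
\]
as $v\to+\infty$ with $y$ bounded, which follows from the standard asymptotic $1-E(x)\sim e^{-\pi x^2}/(\pi x)$. Combined with $|q^{-n^2/2}|=e^{\pi n^2v}$, the $n$-th summand of the series defining $R(z;\tau,\overline\tau)$ is $O(e^{-\pi n^2v}/\sqrt v)$ uniformly on compacta in $z$, so the whole series tends to $0$. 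Differentiating termwise in $z$ either brings down a factor of $-2\pi in$ from $e^{-2\pi inz}$, or differentiates $E$ via $y=(z-\overline z)/(2i)$ and produces an extra Gaussian $e^{-2\pi v(n+y/v)^2}$, neither of which disturbs the $e^{-\pi n^2v}$ decay; hence all partial $z$-derivatives of $R$ vanish in the limit as well.

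The main (mild) obstacle is justifying the uniform majorization needed to interchange the limit with summation and with differentiation; once the dominating estimate $e^{-\pi n^2v}/\sqrt v$ is established, this is routine, and combining it with the reductions above yields $\psi_n^-(\tau,\overline\tau)\to 0$, proving the lemma.
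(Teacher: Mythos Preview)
Your reduction to showing $\lim_{\overline\tau\to-i\infty}\chi_m^-(\tau,\overline\tau)=0$ is correct, and using the asymptotics of $E$ is the right idea. However, there is a genuine gap in the step where you claim that $R$ and its $z$-derivatives, ``evaluated at any fixed $z$'', vanish in the limit ``with $y$ bounded''.

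In the convention of the paper (with $\tau$ and $\overline\tau$ independent), the relevant imaginary part of the first argument of $R$ is \emph{not} bounded. After setting the elliptic variable to $0$, that argument is $w=(K-j)\tau+\tfrac12$, but its formal conjugate is $\overline w=(K-j)\overline\tau+\tfrac12$, so
\[
y_R=\frac{w-\overline w}{2i}=(K-j)v,\qquad v=\frac{\tau-\overline\tau}{2i}\to\infty.
\]
Thus $y_R/v_R=(K-j)/(2r)$ is a fixed nonzero constant (for $j\neq K$), and the correct limit is
\[
E\!\left(\Bigl(n+\tfrac{K-j}{2r}\Bigr)2\sqrt{rv}\right)\longrightarrow \sgn\!\left(n+\tfrac{K-j}{2r}\right),
\]
not $\sgn(n)$. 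For $K=r$ (the case to which the lemma is restricted), $j=0$, $n=-\tfrac12$, one has $n+\tfrac{r-j}{2r}=0$, so $\sgn(n)-E(\cdots)\to-1$: the limit of $R$ contains a nonzero surviving term $\pm i\zeta^{r}q^{r/4}$. Your bound $O(e^{-\pi n^2 v}/\sqrt{v})$ therefore fails for this summand (note also that $|q^{-n^2/2}|$ depends only on the fixed $\tau$, not on the formal $v$, so the cancellation you invoke does not occur).

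The paper's remedy is to isolate this single surviving term and observe that, after multiplication by $\zeta^{-K+j}=\zeta^{-r}$, it becomes the $z$-constant $\pm iq^{r/4}$; hence its $\nu$-th $z$-derivative vanishes for all $\nu\geq 1$, and $\chi_\nu^-\to 0$ for $\nu$ odd. So your outline is salvageable, but the ``$y$ bounded'' estimate must be replaced by this termwise sign computation together with the extra cancellation at $j=0$.
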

	\begin{proof}
		 First note that
		\begin{equation*}
			\lim_{\overline\tau\to-i\infty} \chi_{n-2j}^+(\tau) = \chi_{n-2j}^+(\tau), \quad \lim_{\overline\tau\to-i\infty} \psi_n^+(\tau) = \psi_n^+(\tau).
		\end{equation*}
		Therefore we are done if we show that for $\nu$ odd
		\begin{equation*}
			\lim_{\overline\tau\to-i\infty} \chi_\nu^-(\tau,\overline\tau) = 0.
		\end{equation*}
		We have
		\begin{equation*}
			\lim_{\overline\tau\to-i\infty} \chi_\nu^-(\tau,\overline\tau) = -\tfrac i{2\nu!} \sum_{j=0}^{2r-1} \vartheta\!\left((j-r)\tau+\tfrac12;2r\tau\right) \!\left[\tfrac{\partial^\nu}{\partial z^\nu}\left(\zeta^{j-r}\!\lim_{\overline\tau\to-i\infty}\!R\!\left(2r z+(r-j)\tau+\tfrac12;2r\tau\right)\!\right)\right]_{z=0}\!.
		\end{equation*}
		So it is enough to show that each summand on the right-hand side is $0$. We now write
		\begin{equation*}
			R\!\left(2r z+(r-j)\tau+\tfrac12;2r\tau\right)
			= -i\!\sum_{n\in\Z+\tfrac12}\!\left(\sgn(n)-E\!\left(\!\left(n+\tfrac{r-j}{2r}\right)2\sqrt{rv}+ \tfrac {2\sqrt{r}y}{\sqrt v}\right)\!\right) \zeta^{-2rn} q^{-rn^2+(j-r)n}.
		\end{equation*}
		We next compute
		\begin{equation*}
			\lim_{\overline\tau\to-i\infty} E\!\left(\!\left(n+\frac{r-j}{2r}\right)2\sqrt{rv}+\frac{2\sqrt{r}y}{\sqrt v}\right) = \lim_{v\to\infty} E\!\left(\!\left(n+\frac{r-j}{2r}\right)2\sqrt{rv}+\frac{2\sqrt{r}y}{\sqrt v}\right) = \sgn\left(n+\frac{r-j}{2r}\right)
		\end{equation*}
		since $\lim_{v\to\infty}E(\pm v)=\pm1$.	Recalling that $0\le j<2r$, we conclude
		\begin{equation*}
			\sgn(n) - \sgn\left(n+\frac{r-j}{2r}\right) =
			\begin{cases}
				-1 & \text{if $j=0$ and $n=-\frac12$},\\
				0 & \text{otherwise}.
			\end{cases}
		\end{equation*}
		In the case $j=0$, one checks that
		\begin{align*}
		\lim_{\overline\tau\to i\infty}R\left(2z+r\tau+\frac12;2r\tau\right) &= -i\zeta^{r} q^\frac r4.
	\end{align*}
		Now the claim easily follows.
	\end{proof}
	\begin{proof}[Proof of \Cref{T:Completing}]
		Part (1) of follows from \Cref{P:PolynomialityB}. For parts (2), (3), we use \Cref{lem:42} and \Cref{def:phi} to express $\mathcal{B}_{k,r,s} + \mathcal{B}_{k,r,k-s}$ with the functions~$\psi_n^+$. In case $K=r$, by \Cref{T:Trans} and \Cref{L:PsiLimit} these functions admit modular completions. In case $K\neq r$, \Cref{lem:K=r} guarantees that, up to theta function times powers of $q$, the functions $\psi_n^+$ admit modular completions.
	\end{proof}
	We next determine the action lowering operator $L:=-2iv^2\frac\partial{\partial \bar{\tau}}$ on $\psi_m$. It turns out that this operator relates the $\widehat\psi_m$. We define the theta function (see \cite{Sh} for their modularity properties) with congruence conditions
	\begin{equation*}
		\Theta_{a,b}(\tau) := \sum_{\substack{n\in\Z\\n\equiv a\Pmod b}} q^\frac{n^2}{2b^2}.
	\end{equation*}
	\begin{lem}\label{T:Ell}
		We have, for $m$ odd,
		\[
		L\left(\widehat\psi_{m}\right)(\tau) = 2\pi rv \widehat\psi_{m-2}(\tau)	-\frac{ir^{\frac{m}{2}}\pi^{\frac{m-1}{2}}}{2\left(\frac{m-1}{2}\right)!v^{\frac{m}{2}-1}} \sum_{j=0}^{2r-1} \left|\Theta_{r-j,2r}(2\tau)\right|^2.
		\]
	\end{lem}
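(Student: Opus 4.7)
The operator $L=-2iv^2\partial_{\bar\tau}$ acts on $\widehat\psi_m=\psi_m^++\psi_m^-$ through two distinct mechanisms: the explicit $v^{-j}$-factors appearing in the definition of $\psi_m^\pm$, and the $\bar\tau$-dependence buried inside the $R$-function in each $\chi_n^-$. Since $\chi_n^+$ is holomorphic in $\tau$, one splits
\[
L\bigl(\widehat\psi_m\bigr) = \underbrace{\sum_{j\geq 1} L\!\left(\tfrac{(-2\pi r/v)^j}{j!}\right)\widehat\chi_{m-2j}}_{\text{(I)}} + \underbrace{\sum_{j\geq 0} \tfrac{(-2\pi r/v)^j}{j!}\,L\bigl(\chi_{m-2j}^-\bigr)}_{\text{(II)}}.
\]

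For (I), a direct calculation from $\partial_{\bar\tau}v=i/2$ gives $L(v^{-j})=-j\,v^{1-j}$, so that $L\bigl((-2\pi r/v)^j/j!\bigr)=-(-2\pi r)^j v^{1-j}/(j-1)!$ for $j\geq 1$. Reindexing $j\mapsto j+1$ collapses the sum, with the emerging $v$-factor combining with the remaining $v^{-j'}$-weights to reassemble $\widehat\psi_{m-2}$, yielding the first term $2\pi rv\,\widehat\psi_{m-2}$.

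For (II), the crucial input is Zwegers' identity for the non-holomorphic derivative of $R$: $\partial_{\bar\tau}R(z;\tau)$ equals a Gaussian in $\mathrm{Im}(z)$ times an anti-holomorphic theta series (Proposition 1.9 of Zwegers' thesis). Applied to $R\bigl(2rz+(r-j)\tau+\tfrac12;2r\tau,2r\overline\tau\bigr)$, one obtains an anti-holomorphic theta. Taking the $n$-th $z$-derivative at $z=0$ by \Cref{lem:zetazderivative}, it pairs with the already present holomorphic $\vartheta\bigl((j-r)\tau+\tfrac12;2r\tau\bigr)$ factor in $\chi_n^-$ to produce a product $\vartheta\cdot\overline{\vartheta}$. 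Writing both thetas as sums over $\Z+\tfrac12$ and using the shift by $(r-j)\tau$ to convert the summation index into a congruence condition modulo $2r$, the resulting holomorphic-antiholomorphic pairing consolidates into $|\Theta_{r-j,2r}(2\tau)|^2$; summing over $j=0,\ldots,2r-1$ gives the claimed theta contribution. The restriction to $m$ odd is natural, since \Cref{cor:psieven} renders both sides zero for $m$ even, and is also consistent with the half-integer exponent $m/2-1$ appearing in $v^{m/2-1}$.

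The principal obstacle is the constant-tracking in (II): combining the $(2v)^{-1/2}$ from Zwegers' formula with the $v^{-j}$ weights of $\psi_m^-$ and the $v^2$ from $L$ to recover the clean power $v^{1-m/2}$; and showing that the combinatorial bookkeeping of the $z$-derivatives via the generalized Stirling numbers $\alpha_c, \beta_c$ of \Cref{lem:zetazderivative} evaluates, at $z=0$, to the single prefactor $\frac{r^{m/2}\pi^{(m-1)/2}}{((m-1)/2)!}$. Once these identifications are made for one value of $n$ and $j$, the $\sum_j|\Theta_{r-j,2r}(2\tau)|^2$ structure follows by summing over the congruence classes, completing the proof.
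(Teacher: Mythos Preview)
Your decomposition into (I) and (II) is exactly the Taylor-coefficient version of what the paper does with the generating function
\[
g(z,0;\tau)\;=\;e^{-\frac{2\pi r z^{2}}{v}}\,\widehat f_{r}(z;\tau)\;=\;\sum_{m\ge -1}\widehat\psi_{m}(\tau)\,z^{m},
\]
so that $L(\widehat\psi_{m})=\tfrac{1}{m!}\bigl[\partial_{z}^{m}L(g(z,0;\tau))\bigr]_{z=0}$ and the product rule on $e^{-2\pi r z^{2}/v}\cdot\widehat f_{r}$ gives your two pieces. Your (I) computation is fine and recovers $2\pi r\,\widehat\psi_{m-2}$ after reindexing (careful: $v^{1-j}\mapsto v^{-j'}$ under $j=j'+1$, so no residual factor of $v$ survives; your ``emerging $v$-factor'' is spurious, though this matches the displayed formula as written).

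The substantive divergence is in (II). The paper does \emph{not} compute $L(\chi_{n}^{-})$ for each $n$ and then re-sum against the weights $(-2\pi r/v)^{j}/j!$; that route forces you into a nontrivial combinatorial identity. Instead, the paper applies $L$ once to $\widehat f_{r}(z;\tau)=\widehat A_{2r}(z,0;\tau)$, obtaining (via $E'(x)=2e^{-\pi x^{2}}$) a Gaussian $e^{-4\pi r y^{2}/v}$ times an anti-holomorphic theta, and then takes a \emph{single} $m$-th $z$-derivative of $e^{-2\pi r z^{2}/v}\cdot L(\widehat f_{r})$ at $z=0$. The key simplification you are missing is the observation that with $\bar z=0$ one has $z=2iy$, so that both $e^{-4\pi r y^{2}/v}$ and the factor $n-y/v$ become functions of $z$ alone; the $m$-th derivative of a Gaussian times a linear factor at the origin is then an elementary Hermite-type evaluation, which immediately produces the prefactor $\tfrac{r^{m/2}\pi^{(m-1)/2}}{((m-1)/2)!}\,v^{1-m/2}$ and explains why $m$ odd is needed. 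Your appeal to \Cref{lem:zetazderivative} and the generalized Stirling numbers is a red herring here: those govern derivatives of $\zeta^{c}$ times smooth functions, whereas the object to differentiate is a Gaussian in $y=z/(2i)$, and no $\zeta$-powers survive after the simplification $q^{-rn^{2}}\zeta^{-2rn}e^{-4\pi(n+y/v)^{2}rv}=e^{-2\pi i r n^{2}\bar\tau-4\pi r y^{2}/v}$. In short, your plan is the right one at the level of the product rule and Zwegers' shadow formula, but the paper's generating-function packaging is what makes the ``principal obstacle'' you flag actually computable; as written, your (II) step does not yet contain the mechanism that collapses the $j$-sum and produces the clean constant.
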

	\begin{proof}
		We let
		\begin{align*}
			g(z,\overline z;\tau) &:= e^{-\frac{2\pi rz^2}v} \widehat f_r(z;\tau) = \sum_{\ell\ge0} \frac{\left(-\frac{2\pi rz^2}v\right)^\ell}{\ell!} \sum_{n\ge-1} \chi_n(\overline z;\tau) z^n = \sum_{m\ge-1} \sum_{\substack{\ell\ge0\\n\ge-1\\m=2\ell+n}}\frac{\left(-\frac{2\pi r}v\right)^\ell}{\ell!}\chi_n(\overline z;\tau) z^m.
		\end{align*}
Hence, we have
\[ g(z,0;\tau) = \sum_{m\ge-1} \widehat\psi_m(\tau) z^m.\]
		By \Cref{E:Psi}, we have
		$\widehat\psi_{-1}(\tau)=-\frac1{2\pi i}$. Thus, for $m\in2\N_{0}+1$, we have
		\begin{equation*}
			\widehat\psi_m(\tau) = \frac1{m!}\left[\frac{\partial^m}{\partial z^m}\left(g(z,0;\tau)+\frac1{2\pi i}\frac1z\right)\right]_{z=0}.
		\end{equation*}
		This yields
		\begin{equation}\label{lowP}
			L\left(\widehat\psi_m\right)(\tau) = \frac1{m!}\left[\frac{\partial^m}{\partial z^m}L(g(z,0;\tau))\right]_{z=0}.
		\end{equation}
		Let $f(z,\overline z;\tau):=\widehat f_r(z;\tau)$. Then
		\begin{equation*}
			L(g(z,0;\tau)) = L\left(e^{-\frac{2\pi rz^2}v}\right) f(z,0;\tau) + e^{-\frac{2\pi rz^2}v} L(f)(z,0;\tau).
		\end{equation*}
		We compute
		\begin{equation*}
			L\left(e^{-\frac{2\pi rz^2}v}\right) = {2\pi rz^2} e^{-\frac{2\pi rz^2}v}.
		\end{equation*}
		This term contributes to \eqref{lowP} as
		\begin{equation*}
			{ \frac{2\pi r}{m!}}\left[\frac{\partial^m}{\partial z^m}\left(z^2g(z,0;\tau)\right)\right]_{z=0} = \frac{2\pi r}{m!}\left[\frac{\partial^m}{\partial z^m}\left(z^2\left(g(z,0;\tau)+\frac1{2\pi iz}\right)-\frac z{2\pi i}\right)\right]_{z=0}.
		\end{equation*}
		Now $g(z,0;\tau)+\frac1{2\pi iz}$ does not have a pole in $z=0$. So we need to differentiate $z^2$ twice. If $m=1$, then we get
		\begin{equation*}
			ir = 2\pi r\widehat{\psi}_{-1}(\tau).
		\end{equation*}
		If $m\ge 2$, then we obtain
		\begin{equation*}
			\frac{4\pi r}{m!} \binom m2 \frac{\partial^{m-2}}{\partial z^{m-2}}\left(g(z,0;\tau)+\frac1{2\pi iz}\right) = { 2\pi r} \widehat\psi_{m-2}(\tau).
		\end{equation*}
		We next compute
		\begin{equation*}
			L\left(\widehat f_r(z;\tau)\right) = L\left(\widehat{A}_{2r}(z,0;\tau)\right) =-v^2\sum_{j=0}^{2r-1} \zeta^j\vartheta\left(j\tau+\frac{1}{2};2r\tau\right) \frac{\partial}{\partial \bar{\tau}} R\left(2rz-j\tau+\frac{1}{2};2r\tau\right).
		\end{equation*}
		Making the change of variables $n\mapsto n+\frac j{2r}$, we obtain
		\begin{align*}
			\frac{\partial}{\partial \bar{\tau}} R\left(2rz-j\tau+\frac{1}{2};2r\tau\right) &= iq^{\frac{j^2}{4r}}\zeta^{-j} \sum_{n\in\Z+\frac{1}{2}-\frac{j}{2r}}q^{-rn^2}\zeta^{-2rn} \frac{\partial}{\partial\bar{\tau}} E\left(\left(n+\frac{y}{v}\right)2\sqrt{rv}\right)\\
			&= -q^{\frac{j^2}{4r}}\zeta^{-j} \sqrt{r}\sum_{n\in\Z+\frac{1}{2}-\frac{j}{2r}} q^{-rn^2} \zeta^{-2rn} e^{-4\pi\left(n+\frac{y}{n}\right)^2rv}\left(\frac{n}{\sqrt{v}}-\frac{y}{v^{\frac{3}{2}}}\right),
		\end{align*}
		using that $E'(x)=2e^{-\pi x^2}$. We now simplify
		\begin{equation*}
			q^{-rn^2} \zeta^{-2rn} e^{-4\pi\left(n+\frac{y}{v}\right)^2rv} = e^{-2\pi irn^2\bar{\tau} - \frac{4\pi ry^2}{v}},
		\end{equation*}
		noting that $\bar{z}=0$. Thus we have
		\begin{equation*}
			\frac{\partial}{\partial \bar{\tau}} R\left(2rz-j\tau+\frac{1}{2};2r\tau\right) = -\frac1{\sqrt v}q^{\frac{j^2}{4r}}\zeta^{-j} e^{- \frac{4\pi ry^2}{v}} \sqrt r \sum_{n\in\Z+\frac{1}{2}-\frac{j}{2r}} \left(n-\frac{y}{v}\right)e^{-2\pi irn^2\bar{\tau}}.
		\end{equation*}
		This yields
		\begin{align*}
			L\left(\widehat{f}_{r}(z;\tau)\right) = v^{\frac{3}{2}} \sqrt re^{- \frac{4\pi ry^2}{v}} \sum_{j=0}^{2r-1} q^{\frac{j^2}{4r}}\vartheta\left(j\tau+\frac{1}{2};2r\tau\right) \sum_{n\in\Z+\frac{1}{2}-\frac{j}{2r}}\left(n-\frac{y}{v}\right)e^{-2\pi irn^2\bar{\tau}}.
		\end{align*}

		This term contributes to \eqref{lowP} as
		\begin{equation}\label{Cont2}
			\frac{v^{\frac32}\sqrt{r}}{m!} \sum_{j=0}^{2r-1} q^{\frac{j^2}{4r}}\vartheta\left(j\tau+\frac12;2r\tau\right) \sum_{n\in\Z+\frac{1}{2}-\frac{j}{2r}} e^{-2\pi irn^2\bar{\tau}} \left[\frac{\partial^{m}}{\partial z^m} \left(e^{-\frac{4\pi ry^2}{v}}\left(n-\frac{y}{v}\right)\right)\right]_{z=0}.
		\end{equation}
		We are left to compute
		\begin{align*}
			\left[\frac{\partial^{m}}{\partial z^m} \left(e^{-\frac{4\pi ry^2}{v}}\left(n-\frac{y}{v}\right)\right)\right]_{z=0}\hspace{-0.2cm} = m \left[\frac{\partial}{\partial z}\left(n-\frac{y}{v}\right)\right]_{z=0} \left[\frac{\partial^{m-1}}{\partial z^{m-1}}e^{-\frac{4\pi ry^2}{v}}\right]_{z=0}\hspace{-0.2cm} = m \frac{i}{2v} \frac{\left(\frac{\pi r}{v}\right)^{\frac{m-1}{2}}}{\left(\frac{m-1}{2}\right)!} (m-1)!,
		\end{align*}
		using that $m$ is odd and that as $\bar z=0$, we have $z=2iy$.
		Thus \eqref{Cont2} becomes
		\begin{equation*}
			\frac{i r^{\frac{m}{2}}\pi^{\frac{m-1}{2}}}{2 \left(\frac{m-1}{2}\right)! v^{\frac{m}{2}-1}} \sum_{j=0}^{2r-1} q^{\frac{j^2}{4r}}\vartheta\!\left(j\tau+\frac12;2r\tau\right) \sum_{n\in\Z+\frac{1}{2}-\frac{j}{2r}} e^{-2\pi irn^2 \bar{\tau}}.
		\end{equation*}
		The claim follows by simplifying and writing
		\begin{equation*}
			\vartheta\!\left(j\tau+\frac12;2r\tau\right) =-q^{-\frac{j^2}{4r}} \sum_{n\in\Z+\frac{1}{2}-\frac{j}{2r}} q^{rn^2}.\qedhere
		\end{equation*}
	\end{proof}

	\section{Variations and open questions}\label{sec:open}

\subsection{Variations going back to MacMahon}\label{rem:variations}

	In MacMahon's original work \cite{M1921}, he considered many modifications of $\mathcal{A}_a$. Here, we mention three variations and indicate how the proofs of the relevant theorems would generalize.
	First of all, consider the complementary function
	\begin{align*}
		\mathcal{A}^*_{a,k,r}(q) := \sum_{1\leq n_1 \leq n_2 \leq \cdots \leq n_a} \dfrac{q^{r(n_1 + \dots + n_a)}}{\lp 1 - q^{n_1} \rp^k \cdots \lp 1 - q^{n_a} \rp^k}.
	\end{align*}
	These analogues of $\mathcal{A}_{a,k,r}$ are important in the literature on MacMahon's $q$-series \cite{AOS2023} and are connected to $q$-multiple zeta star values \cite{PP15}. Note that $\mathcal{A}_{1,k,r}=\mathcal{A}^*_{1,k,r}$ for all $k,r$. More generally, a similar proof as the above yields
	\begin{align}\label{eq:A*}
		\mathcal{A}^*_{a,k,r}(q) = \operatorname{coeff}_{\lrB{\zeta^a}}\exp \left(\sum_{n\geq 1}  \mathcal A_{1,nk,nr}(q) \frac{\zeta^n}{n} \right).
	\end{align}
	In particular, $\mathcal{A}^*_{a,2r,r}$ is a quasimodular form of weight $2ar$.

	Secondly, following MacMahon, one could additionally require that the integers $n_1, n_2, \dots, n_a$ in the definition of $\mathcal{A}_{a,k,r}$ belong to certain arithmetic progressions, e.g., enforcing that each $n_j$ should be odd, or should be $\pm 1 \pmod{5}$. We note that the argument in \Cref{P:Quasimodularity} generalizes to these cases, and produces quasimodular forms on congruence subgroups rather than on the full modular group. More generally, for $b,N\in \mathbb{N}$, consider the function
	\begin{align*}
		\sum_{\substack{1\leq n_1 < n_2 < \dots < n_a \\ n_j\equiv \pm b\pmod N}}\dfrac{q^{r(n_1 + \dots + n_a)}}{\lp 1 -  q^{n_1} \rp^{k} \cdots \lp 1 -  q^{n_a} \rp^{k}}.
	\end{align*}
	Upon substituting $k=2r$ and $q\mapsto e^{\frac{2\pi i \tau}{N}}$, this function is a quasimodular form for $\Gamma(N)$, which follows from \Cref{prop:qsh} and the fact that for $a=1$ it is a linear combination of Eisenstein series for $\Gamma(N)$. The special case $k=2r=2$ also follows from \cite[Theorem~1.9]{Ros15}.

	Lastly, for non-zero $P\in \mathbb{N}_0[x]$, consider the function
	\begin{equation*}
		\mathcal{C}_{a,k}(P;q):=
		\sum_{1\leq n_1 < n_2 < \dots < n_a} \dfrac{q^{\sum_{j=1}^a P(n_j)}}{\lp 1 - q^{n_1} \rp^k \cdots \lp 1 - q^{n_a} \rp^k}.
	\end{equation*}
	If the degree of $P$ is at least $2$, a similar argument as in the proof of \Cref{T: Main Theorem} shows that
	\begin{align*}
		q^{-\sum_{j=1}^a P(j)} \mathcal{C}_{a,k}(P;q) = \prod_{n\ge 1} \dfrac{1}{(1-q^n)^k} + O\lrb{q^{a+1}}.
	\end{align*}
	We do not expect any modular properties for $\mathcal{C}_{a,k}(P;\cdot)$ if the degree of $P$ is at least $3$.


	\subsection{Congruence properties}
	As proved in \Cref{C:Congruences}, there are many congruences for certain examples of the functions $\mathcal{A}_{a,k,r}$. In practice, the congruences generated by this particular method may be difficult to find and might lie only in very sparse arithmetic progressions. Based on computational data, we conjecture the following different family of congruences hold.
	For $n\in\IN$ and a prime $p$, we write $\nu_p(n)$ for the $p$-adic valuation of $n$, i.e., the exponent to which $p$ appears in the prime factorization of $n$. 
	\begin{conj}\label{conj:congr}
		Let $a,k,r,s \in\N$. For any prime $p$, we have
		\begin{align*}
			c_{a,k,r}\!\lp p^{\alpha+1}n + p^\alpha \beta \rp \equiv 0 \pmod{p^{\nu_p(k)-\alpha}}
		\end{align*}
		for any $0 \leq \alpha \leq \nu_p(\gcd(k,r))-1$ and $1 \leq \beta \leq p-1$. We also have
		\begin{align*}
			d_{a,k,r,s}\!\lp p^{\alpha+1}n + p^\alpha \beta \rp \equiv 0 \pmod{p^{\nu_p(k)-\alpha}}
		\end{align*}
		for any $0 \leq \alpha \leq \nu_p(\gcd(k,r,s))-1$  and $1 \leq \beta \leq p-1$.
	\end{conj}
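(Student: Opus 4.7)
The plan is to reduce Conjecture~\ref{conj:congr} to a single $p$-adic lemma about powers of $(1-q^n)$ and then read off the congruence from the structure of the $q$-exponents appearing in the defining series. Writing $k = p^\kappa k'$ with $\gcd(k',p)=1$ (so $\kappa = \nu_p(k)$), the key lemma I would establish is
\[
(1-q^n)^{-k} \equiv (1-q^{p^{\alpha+1}n})^{-k/p^{\alpha+1}} \pmod{p^{\kappa-\alpha}}
\]
for every $n \in \N$ and every $0 \le \alpha \le \kappa-1$.

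Assuming the lemma, the conjecture for $c_{a,k,r}$ is quick. Substituting the congruence into the definition of $\mathcal{A}_{a,k,r}$ and using that each factor $(1-q^{n_j})^{-k}$ has constant term $1$ (so that the $a$-fold product of congruences modulo $p^{\kappa-\alpha}$ remains a congruence modulo $p^{\kappa-\alpha}$), one obtains
\[
\mathcal{A}_{a,k,r}(q) \equiv \sum_{1 \le n_1 < \cdots < n_a} \frac{q^{r(n_1+\cdots+n_a)}}{\prod_{j=1}^a (1 - q^{p^{\alpha+1} n_j})^{k/p^{\alpha+1}}} \pmod{p^{\kappa-\alpha}}.
\]
Every exponent of $q$ on the right has the shape $r\sum_j n_j + p^{\alpha+1}\sum_j t_j n_j$ with $t_j \ge 0$, and the hypothesis $\alpha+1 \le \nu_p(\gcd(k,r))$ ensures $p^{\alpha+1} \mid r$, so each such exponent is divisible by $p^{\alpha+1}$. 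The coefficient of $q^N$ on the right-hand side therefore vanishes whenever $\nu_p(N) = \alpha$, i.e.\ whenever $N = p^{\alpha+1} n + p^\alpha \beta$ with $1 \le \beta \le p-1$, yielding the stated congruence for $c_{a,k,r}$.

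The lemma itself is an iterated Frobenius computation. Starting from the base identity $(1-q^n)^p \equiv 1-q^{pn} \pmod p$, I would prove by induction on $c \ge 1$ the single-step refinement
\[
(1-q^n)^{p^c} \equiv (1-q^{pn})^{p^{c-1}} \pmod{p^c}.
\]
Writing $(1-q^n)^{p^{c-1}} = (1-q^{pn})^{p^{c-2}} + p^{c-1} h(q)$ from the inductive hypothesis and expanding the $p$-th power binomially, each cross term $\binom{p}{j}(1-q^{pn})^{p^{c-2}(p-j)} (p^{c-1} h)^j$ with $1 \le j \le p$ has $p$-adic valuation at least $c$; the delicate case is $j = p$, where the bound $(c-1)p \ge c$ holds for $c \ge 2$ regardless of $p$. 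Iterating the single-step congruence $\alpha+1$ times, with the modulus weakening from $p^\kappa$ down to $p^{\kappa-\alpha}$, produces
\[
(1-q^n)^{p^\kappa} \equiv (1-q^{p^{\alpha+1} n})^{p^{\kappa-\alpha-1}} \pmod{p^{\kappa-\alpha}},
\]
and taking reciprocals (both sides are units in $\Z\llbracket q\rrbracket$) and raising to the $k'$-th power proves the lemma.

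The proof for $d_{a,k,r,s}$ is identical: after the same substitution one has
\[
\mathcal{B}_{a,k,r,s}(q) \equiv \sum_{1 \le n_1 < \cdots < n_a} \frac{q^{r\sum_j n_j^2 + s \sum_j n_j}}{\prod_{j=1}^a (1-q^{p^{\alpha+1} n_j})^{k/p^{\alpha+1}}} \pmod{p^{\kappa-\alpha}},
\]
and every exponent $r\sum_j n_j^2 + s\sum_j n_j + p^{\alpha+1}\sum_j t_j n_j$ is divisible by $p^{\alpha+1}$ term by term, because the hypothesis $\alpha+1 \le \nu_p(\gcd(k,r,s))$ forces $p^{\alpha+1}$ to divide both $r$ and $s$, hence $rn_j^2+sn_j$ for each $j$. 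I expect the main technical obstacle to be the careful $p$-adic bookkeeping in the iterated Frobenius lemma, especially the cross term $(p^{c-1}h)^p$ at $j=p$ which needs a uniform argument across $p=2$ and odd primes and must not lose more than one factor of $p$ per iteration; once that is secured, the derivation of the conjecture for both $c_{a,k,r}$ and $d_{a,k,r,s}$ is essentially a one-line observation about $q$-exponents.
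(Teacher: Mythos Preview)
Your argument is correct, and it actually settles a statement the paper leaves open: Conjecture~\ref{conj:congr} is presented in the paper purely as a conjecture supported by numerical data, with no proof given. The only remark the authors make toward a proof is that in the quasimodular sub-case $k=2r$ one might hope to use vanishing of Hecke operators, and they explicitly flag as noteworthy that the congruences persist in the non-modular cases.

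Your approach bypasses modularity entirely. The iterated Frobenius lemma
\[
(1-q^n)^{p^c} \equiv (1-q^{pn})^{p^{c-1}} \pmod{p^c}
\]
is standard (it follows at once from the general principle that $A\equiv B\pmod{p^m}$ implies $A^p\equiv B^p\pmod{p^{m+1}}$ for $m\ge1$), and your chaining of $\alpha+1$ such steps, followed by raising to the $k'$-th power and inverting in $\Z\llbracket q\rrbracket$, is clean. The passage from the termwise congruence to the full series is justified because each $q$-coefficient of $\mathcal{A}_{a,k,r}$ receives contributions from only finitely many tuples $(n_1,\dots,n_a)$; the parenthetical about constant term $1$ is really only needed for the inversion step in the lemma, not for the $a$-fold product, but this is harmless. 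The divisibility hypothesis $\alpha+1\le\nu_p(\gcd(k,r))$ is used exactly where you say: to ensure both that $k/p^{\alpha+1}\in\Z$ and that $p^{\alpha+1}\mid r$, so that the right-hand side lives in $\Z\llbracket q^{p^{\alpha+1}}\rrbracket$. The $\mathcal{B}$-case is identical with $s$ folded into the gcd.

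In short, the paper's envisaged route (Hecke theory, only for $k=2r$) would be both harder and narrower; your elementary $p$-adic argument proves the full conjecture uniformly. One small stylistic point: the cross-term analysis at $j=p$ is more efficiently handled by the single observation $A\equiv B\pmod{p^m}\Rightarrow A^p\equiv B^p\pmod{p^{m+1}}$, which absorbs the case split between $j<p$ and $j=p$ and avoids having to treat $p=2$ separately.
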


	\begin{rem}
Note that only some of the cases covered by this conjecture (namely $c_{a,k,r}$ with $k = 2r$) are associated to quasimodular forms. In that case, results on vanishing of Hecke operators should suffice to prove this conjecture. It is of particular interest that these ``Hecke-type'' congruences appear to hold even if $\mathcal{A}_{a,k,r}$ contains non-modular odd weight Eisenstein series, as well as for $\mathcal{B}_{a,k,r,s}$ for which there exist non-holomorphic completions if $s=\frac k2$. Data also suggests many congruences exist beyond these families, and these sporadic examples appear to exist for both modular and non-modular examples. For example, for $a\in\N$, we conjecture that
		\begin{align*}
			 c_{3a,4,2} \lp 3n+2 \rp & \equiv 0 \pmod{3}, \quad   c_{3a+1,4,2} \lp 3n+2 \rp  \equiv 0 \pmod{3}, \quad c_{2,4,2}\lp 37n\rp \equiv 0 \pmod{19},\\
			 &c_{1,3,1}\lp 8n+4 \rp \equiv 0 \pmod{7},\quad c_{1,5,2}\lp 9n+1 \rp \equiv 0 \pmod{3}.
		\end{align*}
If additionally $r$ and $s$ are both odd, then we conjecture that
\begin{align*}
 d_{a,k,r,s}(2n+1) \equiv 1 \pmod{2^{\nu_p(k)}}.
\end{align*}
	\end{rem}

	\subsection{Expanding eta quotients in MacMahon-like $q$-series}
 Our study of~$\mathcal{A}_{a,k,r}$ and~$\mathcal{B}_{a,k,r,s}$ was initially motivated by Theorem~\ref{T:AOS}, which has recently been further developed. Ono and Singh \cite{OS2024} refined this result using the Jacobi triple product formula in order to prove the exact formula
	\begin{align*}
		\sum_{n \geq 0} p_3(n) q^n = q^{-\frac{k(k+1)}{2}} \sum_{m \geq k} \binom{2m+1}{m+k+1} \mathcal A_{m,2,1}(q).
	\end{align*}
	Thus, the 3-colored partition function is an infinite linear combination of quasimodular forms. In light of Theorem~\ref{T: Main Theorem}, it is natural to ask whether the $q$-series $\mathcal{A}_{a,k,r}$ and $\mathcal{B}_{a,k,r,s}$ can be used to build an exact formula for the eta quotient they approximate.

	\subsection{Hypergeometric representation and recursion}

	Previous literature on these functions also hints strongly towards deeper connections between generalized sum-of-divisor functions and basic $q$-hypergeometric functions. In order to discuss this topic, recall the \textit{$q$-Pochhammer symbol} $(a;q)_n:=\prod_{j=0}^{n-1}(1-a q^j)$ for $n\in\N_0 \cup\{\infty\}$, as well as the \textit{$q$-binomial coefficient} $\binom{n}{m}_{\!q}:=\frac{(q;q)_n}{(q;q)_{n-m} (q;q)_m}$. For basic facts about these $q$-series and their relationship to partitions, see \cite{And18}. There are several isolated results in the literature which connect generalized sum-of-divisor functions to basic $q$-hypergeometric functions. The earliest such result of which the authors are aware is due to Dilcher \cite[Corollary~2]{Dil95}, who proved that
	\begin{align*}
		\mathcal{A}_{a,1,1}^*(q) = \sum_{n \geq a} \binom{n}{a} q^n \lp q^{n+1}; q^n \rp_\infty.
	\end{align*}
Here, $\mathcal{A}^*_{a,k,r}$ is defined by \eqref{eq:A*}. More recently, Amdeberhan--Andrews--Tauraso \cite[Corollary~4.1]{AAT2023} showed that
	\begin{align*}
		\mathcal{A}^*_{a,2,1}(q) = \sum_{n \geq 1} \dfrac{(-1)^n \lp 1 + q^n \rp q^{\frac{n(n+1)}{2} + (a-1)n}}{\lp 1 - q^{2n} \rp^{2a}}.
	\end{align*}
	Both these identities have underlying finite versions; we focus on Dilcher's for simplicity. Dilcher showed \cite[Theorem~4]{Dil95} that
	\begin{align*}
		\sum_{1 \leq n_1 \leq \dots \leq n_a \leq n} \dfrac{q^{n_1 + \dots + n_a}}{\lp 1 - q^{n_1} \rp \cdots \lp 1 - q^{n_a} \rp} = \sum_{m=1}^n \dfrac{(-1)^{m+1} q^{\frac{m(m+1)}{2} + (a-1)m}}{\lp 1 - q^m \rp^a} \binom{n}{m}_{\!q} =: W_{a,n}(q).
	\end{align*}
	The proof is by double induction on $a$ and $n$, the critical ingredient of which is the observation that the identity $\binom{n}{m}_{\!q} - \binom{n-1}{m}_{\!q} = q^{n-m} \frac{1-q^m}{1-q^n} \binom{n}{m}_{\!q}$ for $q$-binomial coefficients implies that
	\begin{align*}
		W_{a,n}(q) - W_{a,n-1}(q) = \sum_{m=1}^n \dfrac{(-1)^{m+1} q^{\frac{m(m+1)}{2} + (a-1)m}}{\lp 1 - q^m \rp^a} \lp \binom{n}{m}_q - \binom{n-1}{m}_{\!q} \rp = \dfrac{q^n}{1 - q^n} W_{a-1,n}(q).
	\end{align*}
	Dilcher's finite theorem follows directly from this recurrence and the fact that the generalized sum-of-divisor functions satisfy the same recurrence and have the same initial conditions. Similar techniques and results lie behind the aforementioned results of \cite{AAT2023}, and in \cite{PP15} the same result was proved from the point of view of $q$-multiple zeta values. We also note here the result of Andrews and Rose \cite[Corollary 2]{AR2013}, which states in our notation that
	\begin{align} \label{AR Equation}
		\mathcal{A}_{a,2,1}(q) = \dfrac{(-1)^a}{\lp 2a+1 \rp! \lp q;q \rp_\infty^3} \sum_{n\geq0} (-1)^n (2n+1) \dfrac{(n+k)!}{(n-k)!} q^{\frac{n(n+1)}{2}}.
	\end{align}
	Although this formula takes a very different shape from those of Dilcher and Amdeberhan--Andrews--Tauraso, it fits broadly into the same framework. We propose that the connection between these $q$-multiple zeta values and basic $q$-hypergeometric functions should lie deeper than these examples. Using the same trick as in Dilcher's work, it is easy to derive the following lemma for
	\begin{align*}
		\mathcal{A}_{a,k,r}(n;q) &:= \sum_{1\leq n_1 < n_2 < \cdots < n_a \leq n} \dfrac{q^{r(n_1 + n_2 + \dots + n_a)}}{\lp 1 - q^{n_1} \rp^k \lp 1 - q^{n_2} \rp^k \cdots \lp 1 - q^{n_a} \rp^k}, \\
		A^*_{a,k,r}(n;q) &:= \sum_{1\leq n_1 \leq n_2 \leq \cdots \leq n_a \leq n} \dfrac{q^{r(n_1 + n_2 + \dots + n_a)}}{\lp 1 - q^{n_1} \rp^k \lp 1 - q^{n_2} \rp^k \cdots \lp 1 - q^{n_a} \rp^k}.
	\end{align*}
	\begin{lem}
		We have the recurrences
		\begin{align*}
			\mathcal{A}_{a,k,r}(n;q) - \mathcal{A}_{a,k,r}(n-1;q) &= \dfrac{q^{rn}}{\lp 1 - q^n \rp^k} \mathcal{A}_{a-1,k,r}(n-1;q), \\
			\mathcal{A}^*_{a,k,r}(n;q) - \mathcal{A}_{a,k,r}(n-1;q) &= \dfrac{q^{rn}}{\lp 1 - q^n \rp^k} \mathcal{A}_{a-1,k,r}(n;q).
		\end{align*}
	\end{lem}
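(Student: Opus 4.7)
The plan is to establish both recurrences by decomposing each truncated sum according to whether the largest index $n_a$ equals $n$ or is strictly less than $n$. For the first identity, split
\[
\mathcal{A}_{a,k,r}(n;q) = \sum_{\substack{1 \leq n_1 < \cdots < n_a \leq n \\ n_a \leq n-1}} \frac{q^{r(n_1+\cdots+n_a)}}{\prod_j (1-q^{n_j})^k} + \sum_{\substack{1 \leq n_1 < \cdots < n_a \leq n \\ n_a = n}} \frac{q^{r(n_1+\cdots+n_a)}}{\prod_j (1-q^{n_j})^k}.
\]
By definition the first piece is exactly $\mathcal{A}_{a,k,r}(n-1;q)$. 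In the second piece, fixing $n_a = n$ pulls out the factor $\frac{q^{rn}}{(1-q^n)^k}$ and leaves a sum over $1 \leq n_1 < \cdots < n_{a-1} \leq n-1$, which is $\mathcal{A}_{a-1,k,r}(n-1;q)$. Rearranging yields the first recurrence.

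The argument for the starred version is entirely parallel: decompose $\mathcal{A}^*_{a,k,r}(n;q)$ in the same way. The only change is that the weak chain $n_1 \leq \cdots \leq n_a$ now allows the residual sum (once $n_a = n$ is fixed) to range over $1 \leq n_1 \leq \cdots \leq n_{a-1} \leq n$, so the upper limit stays at $n$ rather than dropping to $n-1$. This is precisely why the right-hand side of the starred recurrence features $\mathcal{A}^*_{a-1,k,r}(n;q)$ rather than $\mathcal{A}^*_{a-1,k,r}(n-1;q)$, while the residual $n_a \leq n-1$ piece collapses to $\mathcal{A}^*_{a,k,r}(n-1;q)$.

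There is no real obstacle; the entire proof is a one-line disjoint-union decomposition of the index set in each case. The only point that merits emphasis is the bookkeeping between strict and weak inequalities, which accounts for the difference between the two right-hand sides. In particular, this also explains why the starred recurrence is genuinely of a different flavor: it recursively relates values of $\mathcal{A}^*$ at the same truncation level $n$, rather than pushing the truncation down to $n-1$ as in Dilcher's original setup.
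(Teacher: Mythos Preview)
Your argument is correct and is exactly the splitting-by-largest-index computation that the paper leaves implicit when it says the lemma follows ``using the same trick as in Dilcher's work.'' One remark: you have (rightly) read the second identity with stars throughout, i.e.\ as $\mathcal{A}^*_{a,k,r}(n;q)-\mathcal{A}^*_{a,k,r}(n-1;q)=\tfrac{q^{rn}}{(1-q^n)^k}\,\mathcal{A}^*_{a-1,k,r}(n;q)$; the unstarred symbols on the left and right of the paper's displayed second line are evidently typos, since the mixed version fails already for $a=2$.
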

	These recurrences strongly suggest that $\mathcal{A}_{a,k,r}(n;q)$ and $\mathcal{A}_{a,k,r}^*(n;q)$ can be represented as truncations of basic $q$-hypergeometric function as in \cite{AAT2023,Dil95,PP15}, in which case it would follow by taking $n \to \infty$ that $\mathcal{A}_{a,k,r}(q)$ and $\mathcal{A}_{a,k,r}^*(q)$ are basic $q$-hypergeometric functions. We also note that Andrews and Rose identified a different sort of recursion for $\mathcal{A}_{a,2,1}$ \cite[Corollary 3]{AR2013}. So far, we have been unable to produce suitable candidates that satisfy these recurrences, and leave the identification of such $q$-series as an open problem.

	In connection with \eqref{AR Equation}, we provide a slightly more precise speculation. The authors have performed numerical computations that suggest that \Cref{T: Main Theorem} (1) can be extended in the following way modulo higher powers of $q$, e.g.,
	\allowdisplaybreaks
	\begin{align*}
		q^{- a(a+1)} \mathcal{A}_{a,k,2}(q)- \dfrac{1}{\left(q^2;q^2\right)_\infty\left(q;q\right)_{\infty}^{k}} &=- k q^{a+1}\lrb{ \dfrac{\left(q^2;q^2\right)_{\infty}^3}{\left(q;q\right)_{\infty}^{k+2}}+O\left(q^{a+2}\right)},  \\
		q^{- \frac{3a(a+1)}{2}} \mathcal{A}_{a,k,3}(q)- \dfrac{1}{\left(q^3;q^3\right)_{\infty}\left(q;q\right)_{\infty}^{k}} &=- k q^{a+1}\lrb{ \dfrac{\left(q^3;q^3\right)_{\infty}^2}{\left(q;q\right)_{\infty}^{k+1}}+O\left(q^{a+2}\right)}, \\
		q^{-2a(a+1)}\mathcal{A}_{a,k,4}(q)- \dfrac{1}{\left(q^4;q^4\right)_{\infty}\left(q;q\right)_{\infty}^{k}} &=- k q^{a+1}\lrb{ \dfrac{\left(q^2;q^2\right)_{\infty}\left(q^4;q^4\right)_{\infty}}{\left(q;q\right)_{\infty}^{k+1}}+O\left(q^{a+2}\right)},  \\
		q^{- \frac{5a(a+1)}{2}}\mathcal{A}_{a,k,5}(q)- \dfrac{1}{\left(q^5;q^5\right)_{\infty}\left(q;q\right)_{\infty}^{k}} &=- k q^{a+1}\lrb{ \dfrac{\left(q^2;q^2\right)_{\infty}\left(q^3;q^3\right)_{\infty}\left(q^5;q^5\right)_{\infty}^2}{\left(q;q\right)_{\infty}^{k+1}\left(q^4;q^4\right)_{\infty}}+O\left(q^{a+2}\right)}.
	\end{align*}
	This suggests that the functions $\mathcal{A}_{a,k,r}$ might possess a representation in the form
	\begin{align*}
		\mathcal{A}_{a,k,r}(q) = \sum_{m \geq a} c_{a,k,r}(m) \varphi_{k,r,m}(q) q^{\frac{m(m+1)}{2}+(r-1)\frac{a(a+1)}{2}}
	\end{align*}
	for certain $c_{a,k,r}(m) \in \mathbb{Q}$ and $\varphi_{k,r,m}(q)$ a ``nice'' $q$-series (e.g., an eta quotient).

\subsection{Jacobi forms and quasimodular forms}

One of the basic results in the theory of Jacobi forms is that the Taylor coefficients of a Jacobi form $f\lp z;\tau\rp$ yield a sequence of quasimodular forms of growing weight \cite[equation (6) p.~31]{EZ85}. Thus, given a natural sequence of quasimodular forms of growing weight, it is natural to ask whether they form the Taylor coefficients of some Jacobi form, and whether the family can be better understood in the context of the Jacobi structure. For the family $\mathcal{A}_{a,2,1}$ this was accomplished by Rose \cite[Theorem~1.9]{Ros15} (with $n=1$, $S=\{1\}$, and $s=|S|-1=0$)
\[
	\sum_{a\geq 0} (-1)^a \mathcal{A}_{a,2,1}(q) \left(\zeta^\frac12-\zeta^{-\frac12}\right)^{2a+1} = -i \frac{\vartheta(z;\tau)}{\eta(\tau)^3}.
\]
Note that \cite[Theorem~1]{Bac23} is the same result, obtained using the theory of quasi-shuffle algebras. This is clear by the observation $\vartheta'(0;\tau) = -2\pi \eta(\tau)^3$ and the expansion
\[ \frac{\vartheta(z;\tau)}{\vartheta'(0;\tau)} =  2\pi i z \exp\left(\sum_{m\geq 1} \frac{B_{2m}}{2m} E_{2m}(\tau) \frac{(2\pi iz)^{2m}}{(2m)!}\right).
\]

Rose's work contains many examples generalizing this (including all of MacMachon's examples in~\cite{M1921}). 
His result is of more than theoretical interest; it allows him to isolate the pure weight factors of $\mathcal{A}_{a,2,1}$ \cite[Theorem~1.12]{Ros15}. It would be natural to ask whether the quasimodular forms~$\mathcal{A}_{a,2r,r}$ also form the Taylor coefficients of a Jacobi form, and if so, whether there is a natural representation of this Jacobi form as a product of theta functions. Such a representation could be useful in addressing the open problems and conjectures posed above.

\end{document}